\newcommand{\deleted}[1]{}
\newcommand{\delete}[1]{}
\newcommand{\mynote}[1]{}
\newcommand\notes[1]{}
\newcommand\mod{{\rm\ mod\,}}
\newcommand{\paren}[1]{$($#1$)$}
\renewcommand\bar[1]{\overline{#1}}
\renewcommand\tilde[1]{\widetilde{#1}}
\newcommand\text[1]{{\rm #1}}
\newcommand\changed[1]{#1}
\newtheorem{theorem}{Theorem}[section]
\newtheorem{exam}[theorem]{Example}
\newtheorem{lemma}[theorem]{Lemma}
\newtheorem{coro}[theorem]{Corollary}
\newtheorem{mremark}[theorem]{Remark}
\newtheorem{problem}[theorem]{Problem}
\newtheorem{conjecture}[theorem]{Conjecture}
\newtheorem{mprop}[theorem]{Proposition}
\newtheorem{mdefn}[theorem]{Definition}
\newtheorem{prop-def}{Proposition-Definition}[section]
\newtheorem{coro-def}{Corollary-Definition}[section]
\newcommand{\nc}{\newcommand}
\nc{\tred}[1]{\textcolor{red}{#1}}
\nc{\tblue}[1]{\textcolor{blue}{#1}}
\nc{\tgreen}[1]{\textcolor{green}{#1}}
\nc{\tpurple}[1]{\textcolor{purple}{#1}}
\nc{\btred}[1]{\textcolor{red}{\bf #1}}
\nc{\btblue}[1]{\textcolor{blue}{\bf #1}}
\nc{\btgreen}[1]{\textcolor{green}{\bf #1}}
\nc{\btpurple}[1]{\textcolor{purple}{\bf #1}}
\renewcommand{\Bbb}{\mathbb}
\newcommand\Complex{\Bbb C}
\newcommand{\efootnote}[1]{}
\renewcommand{\textbf}[1]{}
\nc{\mlabel}[1]{\label{#1}}  
\nc{\mcite}[1]{\citet{#1}}  
\nc{\mref}[1]{\ref{#1}}  
\nc{\mbibitem}[1]{\harvarditem{#1}} 
\nc{\mlabel}[1]{\label{#1}}  
\nc{\mcite}[1]{\cite{#1}}  
\nc{\mref}[1]{\ref{#1}}  
\nc{\mbibitem}[1]{\bibitem{#1}} 
\nc{\mlabel}[1]{\label{#1}  
{\hfill \hspace{1cm}{\bf{{\ }\hfill(#1)}}}}
\nc{\mcite}[1]{\cite{#1}{{\bf{{\ }(#1)}}}}  
\nc{\mref}[1]{\ref{#1}{{\bf{{\ }(#1)}}}}  
\nc{\mbibitem}[1]{\bibitem[\bf #1]{#1}} 
\renewcommand\geq{\geqslant}
\renewcommand\leq{\leqslant}
\nc{\mlabel}[1]{\label{#1}}  
\nc{\mcite}[1]{\cite{#1}}  
\nc{\mref}[1]{\ref{#1}}  
\nc{\mbibitem}[1]{\bibitem{#1}} 
\nc{\lead}{\mathrm{Lead}}
\nc{\Id}{\mathrm{Id}}
\nc{\Irr}{\mathrm{Irr}}
\nc{\vx}{\sigma}
\nc{\vy}{\tau}
\nc{\dvx}{\sigma^{(1)}}
\nc{\dvy}{\tau^{(1)}}
\nc{\done}{\vep}
\nc{\wt}{\mathrm{wt}}
\nc{\bre}[1]{|#1|} \nc{\mapmonoid}{\frakM}
\nc{\disjoint}{\frakM'}
\nc{\ncpoly}[1]{\langle #1\rangle}  
\nc{\mapm}[1]{\lfloor\!|{#1}|\!\rfloor}             
\nc{\diff}[1]{{}^\NC\{ #1 \}}
\nc{\disj}[1]{\{{#1}\}'}
\nc{\mdisj}[1]{\frakM'(#1)}
\nc{\brho}{\bar{\rho}}
\nc{\om}{\bar{\frakm}}
\nc{\frakn}{\mathfrak n}
\nc{\ddeg}[1]{^{(#1)}}
\nc{\opset}{X} \nc{\genset}{{Z}}
\nc{\NC}{\mathrm{{NC}}}
\nc{\bin}[2]{ (_{\stackrel{\scs{#1}}{\scs{#2}}})}  
\nc{\binc}[2]{ \left (\!\! \begin{array}{c} \scs{#1}\\
    \scs{#2} \end{array}\!\! \right )}  
\nc{\bincc}[2]{  \left ( {\scs{#1} \atop
    \vspace{-1cm}\scs{#2}} \right )}  
\nc{\bs}{\bar{S}}
\nc{\cosum}{\sqsubset}
\nc{\la}{\longrightarrow}
\nc{\rar}{\rightarrow}
\nc{\dar}{\downarrow}
\nc{\dprod}{**}
\nc{\dap}[1]{\downarrow \rlap{$\scriptstyle{#1}$}}
\nc{\md}{\mathrm{dth}}
\nc{\uap}[1]{\uparrow \rlap{$\scriptstyle{#1}$}}
\nc{\defeq}{\stackrel{\rm def}{=}}
\nc{\disp}[1]{\displaystyle{#1}}
\nc{\dotcup}{\ \displaystyle{\bigcup^\bullet}\ }
\nc{\gzeta}{\bar{\zeta}}
\nc{\hcm}{\ \hat{,}\ }
\nc{\hts}{\hat{\otimes}}
\nc{\barot}{{\otimes}}
\nc{\free}[1]{\bar{#1}}
\nc{\uni}[1]{\tilde{#1}}
\nc{\hcirc}{\hat{\circ}}
\nc{\leng}{\ell}
\nc{\lleft}{[}
\nc{\lright}{]}
\nc{\lc}{\lfloor}
\nc{\rc}{\rfloor}
\nc{\lb}{[} 
\nc{\rb}{]} 
\nc{\curlyl}{\left \{ \begin{array}{c} {} \\ {} \end{array}
    \right .  \!\!\!\!\!\!\!}
\nc{\curlyr}{ \!\!\!\!\!\!\!
    \left . \begin{array}{c} {} \\ {} \end{array}
    \right \} }
\nc{\longmid}{\left | \begin{array}{c} {} \\ {} \end{array}
    \right . \!\!\!\!\!\!\!}
\nc{\onetree}{\bullet}
\nc{\ora}[1]{\stackrel{#1}{\rar}}
\nc{\ola}[1]{\stackrel{#1}{\la}}
\nc{\ot}{\otimes}
\nc{\mot}{{{\boxtimes\,}}}
\nc{\otm}{\overline{\boxtimes}}
\nc{\sprod}{\bullet}
\nc{\scs}[1]{\scriptstyle{#1}}
\nc{\mrm}[1]{{\rm #1}}
\nc{\msum}{\sum\limits}
\nc{\margin}[1]{\marginpar{\rm #1}}   
\nc{\dirlim}{\displaystyle{\lim_{\longrightarrow}}\,}
\nc{\invlim}{\displaystyle{\lim_{\longleftarrow}}\,}
\nc{\mvp}{\vspace{0.3cm}}
\nc{\tk}{^{(k)}}
\nc{\tp}{^\prime}
\nc{\ttp}{^{\prime\prime}}
\nc{\svp}{\vspace{2cm}}
\nc{\vp}{\vspace{8cm}}
\nc{\proofbegin}{\noindent{\bf Proof: }}
\nc{\proofend}{$\blacksquare$ \vspace{0.3cm}}
\nc{\modg}[1]{\!<\!\!{#1}\!\!>}
\nc{\intg}[1]{F_C(#1)}
\nc{\lmodg}{\!<\!\!}
\nc{\rmodg}{\!\!>\!}
\nc{\cpi}{\widehat{\Pi}}
\nc{\sha}{{\mbox{\cyr X}}}  
\nc{\shap}{{\mbox{\cyrs X}}} 
\nc{\shpr}{\diamond}    
\nc{\shp}{\ast}
\nc{\shplus}{\shpr^+}
\nc{\shprc}{\shpr_c}    
\nc{\msh}{\ast}
\nc{\zprod}{m_0}
\nc{\oprod}{m_1}
\nc{\vep}{\varepsilon}
\nc{\labs}{\mid\!}
\nc{\rabs}{\!\mid}
\nc{\dth}{d}
\nc{\mmbox}[1]{\mbox{\ #1\ }}
\nc{\fp}{\mrm{FP}} \nc{\rchar}{\mrm{char}} \nc{\Fil}{\mrm{Fil}}
\nc{\Mor}{Mor\xspace}
\nc{\gmzvs}{gMZV\xspace}
\nc{\gmzv}{gMZV\xspace}
\nc{\mzv}{MZV\xspace}
\nc{\mzvs}{MZVs\xspace}
\nc{\Hom}{\mrm{Hom}} \nc{\id}{\mrm{id}} \nc{\im}{\mrm{im}}
\nc{\incl}{\mrm{incl}} \nc{\map}{\mrm{Map}} \nc{\mchar}{\rm char}
\nc{\nz}{\rm NZ} \nc{\supp}{\mathrm Supp}
\nc{\Alg}{\mathbf{Alg}}
\nc{\Bax}{\mathbf{Bax}}
\nc{\bff}{\mathbf f}
\nc{\bfk}{{\bf k}}
\nc{\bfone}{{\bf 1}}
\nc{\bfx}{\mathbf x}
\nc{\bfy}{\mathbf y}
\nc{\base}[1]{\bfone^{\otimes ({#1}+1)}} 
\nc{\Cat}{\mathbf{Cat}}
\nc{\detail}{\marginpar{\bf More detail}
    \noindent{\bf Need more detail!}
    \svp}
\nc{\Int}{\mathbf{Int}}
\nc{\Mon}{\mathbf{Mon}}
\nc{\rbtm}{{shuffle }}
\nc{\rbto}{{Rota-Baxter }}
\nc{\remarks}{\noindent{\bf Remarks: }}
\nc{\Rings}{\mathbf{Rings}}
\nc{\Sets}{\mathbf{Sets}}
\nc{\BA}{{\Bbb A}} \nc{\CC}{{\Bbb C}} \nc{\DD}{{\Bbb D}}
\nc{\EE}{{\Bbb E}} \nc{\FF}{{\Bbb F}} \nc{\GG}{{\Bbb G}}
\nc{\HH}{{\Bbb H}} \nc{\LL}{{\Bbb L}} \nc{\NN}{{\Bbb N}}
\nc{\KK}{{\Bbb K}} \nc{\QQ}{{\Bbb Q}} \nc{\RR}{{\Bbb R}}
\nc{\TT}{{\Bbb T}} \nc{\VV}{{\Bbb V}} \nc{\ZZ}{{\Bbb Z}}
\nc{\cala}{{\mathcal A}} \nc{\calc}{{\mathcal C}}
\nc{\cald}{{\mathcal D}} \nc{\cale}{{\mathcal E}}
\nc{\calf}{{\mathcal F}} \nc{\calg}{{\mathcal G}}
\nc{\calh}{{\mathcal H}} \nc{\cali}{{\mathcal I}}
\nc{\call}{{\mathcal L}} \nc{\calm}{{\mathcal M}}
\nc{\caln}{{\mathcal N}} \nc{\calo}{{\mathcal O}}
\nc{\calp}{{\mathcal P}} \nc{\calr}{{\mathcal R}}
\nc{\cals}{{\mathcal S}}
\nc{\calt}{{\mathcal T}} \nc{\calw}{{\mathcal W}}
\nc{\calk}{{\mathcal K}} \nc{\calx}{{\mathcal X}}
\nc{\CA}{\mathcal{A}}
\nc{\fraka}{{\mathfrak a}} \nc{\frakA}{{\mathfrak A}}
\nc{\frakb}{{\mathfrak b}} \nc{\frakB}{{\mathfrak B}}
\nc{\frakD}{{\mathfrak D}} \nc{\frakH}{{\mathfrak H}}
\nc{\frakM}{{\mathfrak M}} \nc{\bfrakM}{\overline{\frakM}}
\nc{\frakm}{{\mathfrak m}} \nc{\frakP}{{\mathfrak P}}
\nc{\frakN}{{\mathfrak N}} \nc{\frakp}{{\mathfrak p}}
\nc{\frakS}{{\mathfrak S}} \nc{\frakx}{{\mathfrak x}}
\nc{\ox}{\bar{\frakx}} \nc{\frakX}{{\mathfrak X}}
\nc{\fraky}{{\mathfrak y}} \nc\dop{\delta}
\font\cyr=wncyr10
\font\cyrs=wncyr7
\nc{\redt}[1]{\textcolor{red}{#1}}
\nc{\li}[1]{\textcolor{red}{\tt Li:#1}}
\nc{\nli}[1]{\textcolor{red}{\tt Li:#1}}
\nc{\ws}[1]{\textcolor{blue}{William: #1}}
\nc{\rh}[1]{\textcolor{blue}{Ronghua:#1}}
\nc{\rhz}[1]{\textcolor{red}{Ronghua:#1}}
\journal{Journal of Symbolic Computation}
\begin{document}
\hyphenpenalty=8000
\begin{frontmatter}

\notes{\ead[url]{URL 1}}

\title{Differential Type Operators and\\Gr\"obner-Shirshov Bases}
%
%
\author{Li Guo}
\address{Department of Mathematics and Computer Science,
         Rutgers University,
         Newark, NJ 07102, USA}
\ead{liguo@newark.rutgers.edu}
\author{William Y. Sit}
\address{Dept. of Math., The City College of The City
University of New York, New York, NY 10031, USA}
\ead{wyscc@sci.ccny.cuny.edu}
\author{Ronghua Zhang}
\address{Research Institute of Natural Sciences, Yunnan University,
Kunming 650091, China}
\ead{rhzhang@ynu.edu.cn}


\date{\today}

\begin{abstract}
A long standing problem of Gian-Carlo Rota for associative algebras
is the classification of all linear operators that can be defined on
them. In the 1970s, there were only a few known operators, for
example, the derivative operator, the difference operator, the
average operator, and the Rota-Baxter operator. A few more appeared
after Rota posed his problem. However, little progress was made to
solve this problem in general. In part, this is because the precise
meaning of the problem is not so well understood. In this paper, we
propose a formulation of the problem using the framework of operated
algebras and viewing an associative algebra with a linear operator
as one that satisfies a certain operated polynomial identity. This
framework also allows us to apply theories of rewriting systems and
Gr\"{o}bner-Shirshov bases. To narrow our focus more on the
operators that Rota was interested in, we further consider two
particular classes of operators, namely, those that generalize
differential or Rota-Baxter operators. As it turns out, these two
classes of operators correspond to those that possess
Gr\"obner-Shirshov bases under two different monomial orderings.
Working in this framework, and with the aid of computer algebra, we
are able to come up with a list of these two classes of operators,
and provide some evidence that these lists may be complete. Our
search has revealed quite a few new operators of these types whose
properties are expected to be similar to the differential operator
and Rota-Baxter operator respectively.

Recently, a more unified approach has emerged in related areas, such
as difference algebra and differential algebra, and Rota-Baxter
algebra and Nijenhuis algebra. The similarities in these theories
can be more efficiently explored by advances on Rota's problem.
\end{abstract}

\begin{keyword}
Rota's Problem; rewriting systems, Gr\"obner-Shirshov bases;
operators; classification; differential type operators, Rota-Baxter
type operators.
\end{keyword}

\end{frontmatter}


\setcounter{section}{0}


\section{Introduction}
\mlabel{sec:int}

Throughout the history of mathematics, objects are often understood
by studying operators defined on them. Well-known examples are found
in Galois theory, where a field is studied by its automorphisms, and
in analysis and geometry, where functions and manifolds are studied
through derivatives and vector fields. These operators abstract to
the following linear operators on associative algebras.
\vspace{-12pt}
\begin{eqnarray}
\hspace{0.6in}\text{automorphism}\hspace{1in}
P(x\,y)& = &P(x\,)P(y), \mlabel{eq:hom}\\
\text{derivation}\hspace{1.02in}  \dop(x\,y)& = &\dop(x)\,y+x\,
\dop(y). \mlabel{eq:der}
\end{eqnarray}
By the 1970s, several more special operators, denoted by $P$ below
with corresponding name and defining property, had been studied in
analysis, probability and combinatorics, including, for a fixed
constant $\lambda$,
\vspace{-12pt}
\begin{eqnarray}\text{average}\hspace{1in}  P(x)\,P(y)& =& P(x\,P(y)),
\mlabel{eq:av}\\
\text{inverse\ average}\hspace{1in} P(x)\,P(y)& = & P(P(x)\,y),
\mlabel{eq:iav}
\\
\text{(Rota-)Baxter\ (weight\ \lambda)}\hspace{1in} P(x)\,P(y)& =
&P(x\,P(y)+P(x)\,y+\lambda\, x\,y),
\mlabel{eq:rb} \\
\text{Reynolds}\hspace{1in}P(x)\,P(y)& =
&P(x\,P(y)+P(x)\,y-P(x)\,P(y)). \mlabel{eq:rey}
\end{eqnarray}
\mcite{Ro2} posed the question of finding all the identities that
could be satisfied by a linear operator defined on associative
algebras. He also suggested that there should not be many such
operators other than these previously known ones.\footnote{The
following is quoted from Rota's paper. ``In a series of papers, I
have tried to show that other linear operators satisfying algebraic
identities may be of equal importance in studying certain algebraic
phenomena, and I have posed the problem of finding all possible
algebraic identities that can be satisfied by a linear operator on
an algebra. Simple computations show that the possibility are very
few, and the problem of classifying all such identities is very
probably completely solvable. A notable step forward has been made
in the unpublished (and unsubmitted) Harvard thesis of Alexander
Doohovskoy.'' He also remarked that a partial (but fairly complete)
list of such identities are Eq.~(\mref{eq:hom})-(\mref{eq:rey}).}
Even though there was some work on relating these different
operators~\citep{Fr}, little progress was made on finding {\it all}
such operators. In the meantime, new identities for operators have
emerged from physics, algebra and combinatorial studies, such as
\vspace{-12pt}
\begin{eqnarray}
\text{Nijenhuis}\hspace{0.8in}
P(x)\,P(y)&=&P(x\,P(y)+P(x)\,y-P(x\,y)), \mlabel{eq:ni}
\\
\text{Leroux's\ TD} \hspace{0.8in} P(x)\,P(y)& = &P(x\,P(y)
+P(x)\,y-x\,P(1)\,y),\mlabel{eq:td}\\
\hspace{0.3in}\text{derivation\ (weight\ \lambda)}\hspace{1in}
\dop(x\,y)&=&\dop(x)\,y+x\,\dop(y)+\lambda\,\dop(x)\,\dop(y).
\mlabel{eq:dlambda}
\end{eqnarray}
The previously known operators continue to find remarkable
applications in pure and applied mathematics. For differential
operators, we have the development of differential algebra
\citep{Kol}, difference algebra~\citep{Co,AL}, and quantum
differential operators~\citep{LR, LR2}. For Rota-Baxter algebras, we
note their relationship with the classical Yang-Baxter equation,
operads, combinatorics, and most prominently, the renormalization of
quantum field theory through the Hopf algebra framework of Connes
and Kreimer
\citep{C-K1,G-K1,G-K3,Ag,AGKO,E-G-K3,EGM,G-S,Bai,E-G,G-Z}.

\subsection{Our approach}
These interesting developments motivate us to return to Rota's
question and try to understand the problem
better.\footnote{Disclaimer: We are still exploring the best way to
formulate Rota's problem and nothing in this paper is meant to
provide a definitive formulation.} In doing so, we found that two
key points in Rota's question deserve further thoughts. First, we
need a suitable framework to formulate precisely what is an
``operator identity,'' and second, we need to determine key
properties that characterize the classes of operator identities
that are of interest to other areas of mathematics, such as those
listed above.

For the first point, we note that a simplified but analogous
framework has already been formulated in the 1960s and subsequently
explored with great success. This is the study of PI-rings and
PI-algebras, whose elements satisfy a set of polynomial identities,
or PIs for short ~\citep{Pr,Ro,DF}.

Let $\bfk$ be a commutative unitary ring. In this paper, all
algebras are unitary, associative $\bfk$-algebras that are generally
non-commutative, and all algebra homomorphisms will be over $\bfk$,
unless the contrary is noted or obvious.

Recall that an algebra $R$ {\it satisfies a polynomial identity} if
there is a non-zero (non-commutative) polynomial $\phi(X)$ in a
finite set $X$ of indeterminates over $\bfk$ (that is, $\phi(X) \in
\bfk\ncpoly{X}$, the free algebra on $X$) such that $\phi(X)$ is
sent to zero under any algebra homomorphism $f: \bfk\ncpoly{X} \to
R$. To generalize this framework to the operator case, we shall
introduce formally in Section~\mref{sec:form} the notion of operated
algebras and the construction of the free operated algebra
$\bfk\mapm{X}$ on $X$, which shall henceforth be called {\it the
operated polynomial algebra on $X$}.  An operator identity will
correspond to a particular element $\phi(X)$ in $\bfk\mapm{X}$.
Analogous to PI-algebras, an OPI-algebra $R$ is an algebra with a
$\bfk$-linear operator $P$, a finite set $X$, and an operated
polynomial $\phi(X) \in \bfk\mapm{X}$ such that $\phi(X)$ is sent to
zero under any morphism (of operated algebras) $f:\bfk\mapm{X} \to
R$. The operated polynomial $\phi$, or the equation $\phi(X) = 0$,
is called an operated polynomial identity (OPI) on $R$ and we say
$P$ (as well as $R$) {\it satisfies the OPI} $\phi$ (or
$\phi(X)=0$).

As a first example, a differential algebra\footnote{We illustrate
only with an ordinary differential algebra, where the common
notation for the derivation is $\delta$. In this paper, we have three symbols for
the operator: $\lc\, \rc$, $P$, and $\delta$, to be used
respectively for a general (or bracketed word) setting, the
Rota-Baxter setting, and the differential/difference setting; often,
they are interchangeable. We use $\lc\,\rc$ for $\bfk\mapm{X}$ to
emphasize that $\bfk\mapm{X}$ is {\it not} the differential
polynomial ring. Any dependence of the operator on parameters is
suppressed, unless clarity requires otherwise. \mlabel{ft:one}} is
an OPI-algebra $R$ with operator $\dop$, where the OPI is defined
using $X=\{x,y\}$ and $\phi(x,y):=\lc xy \rc -\lc x \rc y-x \lc
y\rc$, where $\lc\,\rc$ denotes the operator in $\bfk\mapm{X} =
\bfk\mapm{x,y}$. As a second example, a difference algebra $S$ is an
OPI-algebra where the $\bfk$-linear operator $P$ is an endomorphism,
that is, $(S, P)$ satisfies $P(r)P(s) = P(rs)$ for all $r,s \in S$.
A common difference algebra (taken from \cite[pp.\,104--5]{AL}) is
the following: Let $z_0 \in \Complex$, where $\Complex$ is the field
of complex numbers, and let $S$ be the field of all functions $f(z)$
of one complex variable $z$ meromorphic in the region $U = \{ z \in
\Complex \mid ({\rm Re}\,z)({\rm Re}\,z_0) \geq 0\}$ (so that $z+z_0
\in U$ for all $z \in U$), then the shift (or translation) operator
$P$ taking $f(z) \in S$ to $f(z+z_0) \in S$ is an automorphism of
$S$, making $(S, P)$ an (inversive) difference algebra.

With all operator identities understood to be OPIs in
$\bfk\mapm{X}$, the second point mentioned above may at first be
interpreted as follows: among all OPIs, which ones are particularly
consistent with the associative algebra structure so that they are
singled out for study?
This is a
subtle question since one might argue (correctly, see Proposition
\ref{pp:frpio}) that any OPI defines a class of (perhaps trivial)
operated algebras, just like any PI defines a class of algebras. We
approach this by making use of two related theories: rewriting
systems and Gr\"{o}bner-Shirshov bases.

First, we shall regard an OPI as a rule that defines a rewriting
system\footnote{We remind the reader that a term rewriting rule is a
one-way replacement rule that depends on a term-order, unlike an
equality or a congruence.} and study certain properties of this
rewriting system, such as termination and confluence, that will
characterize OPIs of interest. Termination and confluence are
essential and desirable properties since we discovered our lists of
OPIs by symbolic computation. As a rewriting rule, an OPI $\phi$ can
be applied recursively and if not carefully done, such applications
may lead to infinite recursion, in which case, it is no longer
computationally feasible to derive meaningful consequences on the
associative algebra from the OPI $\phi$. An example is the Reynolds
operator identity in Eq.\,(\mref{eq:rey}), where, if taken as a
rewriting rule by replacing the equal sign with $\to$, the right
hand side contains the expression $P(x)P(y)$, which equals the
left-hand-side, leading to more and more complicated expressions as
the rewriting rule is applied repeatedly {\it ad infinitum}.

By putting aside for now OPIs like the Reynolds identity, we in
effect restrict the class of OPIs under investigation and this
allows us to apply symbolic computation to search for a list of
identities for two broad families that include all the (other)
previously mentioned OPIs. One family of operators consists of the
OPIs of differential type, which include derivations, endomorphisms,
differential operators of weight $\lambda$, and more generally
operators $\delta$ satisfying an OPI of the form $\phi := \lc xy \rc
- N(x,y)$, where $N(x,y)$ is a formal expression in $\bfk\mapm{x,y}$
in {\it differentially reduced form}, that is, it does not contain
any subexpression of the form $\lc uv \rc$ for any $u, v \in
\bfk\mapm{x,y}$. The other family consists of the OPIs of
Rota-Baxter type, which include those defining the average,
Rota-Baxter, Nijenhuis, Leroux's TD operators, and more generally
OPIs of the form $\phi := \lc x\rc \lc y \rc-\lc M(x,y)\rc$ where
$M(x,y)$ is an expression in $\bfk\mapm{x,y}$ in {\it Rota-Baxter
reduced form}, that is, it does not involve any subexpression of the
form $\lc u \rc \lc v\rc$ for any $u, v \in
\bfk\mapm{x,y}$.\footnote{This, by definition, excludes the Reynolds
operator as it stands. However, if we rewrite the Reynolds identity
as $P(P(x)P(y)) = P(xP(y))+ P(P(x)y) - P(x)P(y)$, then it would be
computationally feasible to explore its interaction with
associativity, and would suggest that the Reynolds operator belongs
to a ``higher order'' class.}

These two families share a common feature: each OPI involves a
product: $xy$ for differential type, and $\lc x\rc \lc y \rc$ for
Rota-Baxter type. These families of OPIs thus provide properties
arising from the associativity of multiplication, which we can
explore in our computational experiments. More generally, for an OPI
that gives rise to a terminating rewriting system, the associative
law imposes various confluence constraints that may be satisfied by
some operated algebras, but not by others. Thus, another advantage
of the rewriting system approach is that we may use such constraints
as criteria to screen OPI-algebras for further research.

In Section~\mref{sec:form} of this paper, we begin the construction
of the free operated algebras $\bfk\mapm{X}$ using a basis of
bracketed words in $X$. This will be the universal space for OPIs by
which we formulate Rota's problem precisely in a general setting of
a free operated algebra satisfying an OPI $\phi$. In
Section~\mref{sec:GS}, we develop Gr\"obner-Shirshov bases for free
operated algebras and prove the Composition-Diamond Lemma (Theorem
\ref{thm:CDL}). In Section~\mref{sec:diff}, we define operators and
operated algebras of differential type and propose a conjectural
answer to Rota's Problem in this case with a list of differential
type OPIs. As evidence of our conjecture, we verify in Section
\mref{ss:evid} that the operators in our list all satisfy the
properties prescribed for a differential type operator, and in
Section~\mref{ss:diffGS}, we prove several equivalent criteria for
an OPI $\phi$ in $\bfk\mapm{x,y}$ in differentially reduced form to
be of differential type (Theorem \ref{thm:gsdiff}), a result that
connects together the rewriting system induced by $\phi$, the
Gr\"obner-Shirshov bases of the operated ideal induced by $\phi$,
and the free operated algebras satisfying $\phi$. In
Section~\mref{sec:RB}, we define similarly operators and operated
algebras of Rota-Baxter type and give a conjecture for the complete
list of Rota-Baxter type OPIs. In Section~\mref{sec:comp}, we give a
description of an empirical {\it Mathematica} program by which we
obtained the lists. In the final Section~\mref{sec:sum}, we
explain our approach in the context of varieties of algebras,
providing research directions towards a further understanding of
Rota's Problem, leading possibly to new tools and theoretical proofs
of our conjectures.

\section{Operator identities}
\mlabel{sec:form}

In this section we give a precise definition of an OPI in the
framework of operated algebras.\footnote{The concepts, construction
of free objects and results in this section are covered in more
generality in texts on universal algebra \citep{BS,Cohn,BN}. Our
review makes this paper more accessible and allows us to establish
our own notations.} We review the concept of operated (associative)
monoids, operated algebras, and bracketed words, followed by a
construction for the free operated monoids and algebras using
bracketed words. Bracketed words are related to Motzkin words and
decorated rooted trees~\citep{Guop}.

\subsection{Operated monoids and algebras}
\begin{mdefn}
{\rm An {\bf operated monoid}\footnote{As remarked in Footnote
\ref{ft:one}, we use the same symbol $P$ for all distinguished maps
and hence we shall simply use $U$ for an operated monoid. In this
paper, all semigroups and monoids are associative but generally
non-commutative.} is a monoid $U$ together with a map $P: U\to U$. A
morphism from an operated monoid\, $U$ to an operated monoid $V$ is
a monoid homomorphism $f :U\to V$ such that $f \circ P= P \circ f$,
that is, the diagram below is commutative: {$$
 \xymatrix{
            U\ar[rr]^P \ar[d]^f && U \ar[d]_f \\
            V\ar[rr]^P          && V}
$$ }}\mlabel{de:mapset}
\end{mdefn}
Let $\bfk$ be a commutative unitary ring. In Definition
\mref{de:mapset}, we may replace ``monoid'' by ``semigroup,''
``$\bfk$-algebra,'' or ``nonunitary $\bfk$-algebra'' to
define\footnote{To adapt Definition \mref{de:mapset} for operated
$\bfk$-algebra categories, $P$ is assumed to be a $\bfk$-linear map
and $f$ is a morphism of the underlying $\bfk$-algebras.} {\bf
operated semigroup}, {\bf operated $\bfk$-algebra} and {\bf operated
nonunitary $\bfk$-algebra}, respectively. For example, the semigroup
$\calf$ of rooted forests, with the concatenation product and the
grafting map $\lc\;\rc$, turns $\calf$ into an operated
semigroup~\citep{Guop}. The $\bfk$-module $\bfk\,\calf$ generated by
$\calf$ is an operated nonunitary $\bfk$-algebra. The unitarization
of this algebra has appeared in the work of \mcite{C-K0} on
renormalization of quantum field theory.

The adjoint functor of the forgetful functor from the category of
operated monoids to the category of sets gives the free operated
monoids in the usual way.  More precisely, a {\bf free operated
monoid} on a set $X$ is an operated monoid $U$ together with a map
$j:X\to U$ with the property that, for any operated monoid $V$
together with a map $f:X\to V$, there is a unique morphism
$\free{f}:U\to V$ of operated monoids such that $f=\free{f}\circ j.$
Any two free operated monoid on the same set $X$ are isomorphic via
a unique isomorphism. 

We similarly define the notion of a free operated (nonunitary)
$\bfk$-algebra on a set $X$. As shown in~\mcite{Guop}, the
operated non-unitary $\bfk$-algebra of rooted forests mentioned
above is the free operated non-unitary $\bfk$-algebra on one
generator.

An {\bf operated ideal} in an operated $\bfk$-algebra $R$ is an
ideal closed under the operator. The operated ideal {\bf generated
by a set} $\Phi \subseteq R$ is the smallest operated ideal in $R$
containing $\Phi$.

\subsection{Free operated monoids}

For any set $Y$, let $M(Y)$ be the free monoid generated by $Y$ and
let $\lc Y\rc$ be the set $\{ \lc y\rc \mid y\in Y\}$, which is just
another copy of $Y$ whose elements are denoted by $\lc y\rc$ for
distinction.

We now construct the free operated monoid over a given set $X$ as
the limit of a directed system $$\{\,\iota_{n}: \mapmonoid_n\to
\mapmonoid_{n+1}\, \}_{n=0}^\infty$$ of free monoids $\mapmonoid_n$,
where the transition morphisms $\iota_{n}$ will be natural
embeddings. For this purpose, let $\mapmonoid_0=M(X)$, and let
$$ \mapmonoid_1:=M(X\cup \lc \mapmonoid_0\rc).$$ Let $\iota_{0}$
be the natural embedding $\iota_{0}:\mapmonoid_0 \hookrightarrow
    \mapmonoid_1$.
Note that elements in $\lc M(X)\rc$ are only symbols indexed by
elements in $M(X)$. Thus, while $\bfone\in \mapmonoid_0$ is
identified with $\iota_{0}(\bfone)=\bfone\in \mapmonoid_1$, $\lc
\bfone\rc \in \mapmonoid_1$ is not the identity.

Assuming by induction that for some $n\geq 2$, we have defined the
free monoids $\mapmonoid_i, 0\leq i\leq n-1,$ and the embedding
 $ \iota_{i-2}: \mapmonoid_{i-2} \to \mapmonoid_{i-1}, 0\leq i\leq n-2.$
 Let
\begin{equation}
 \mapmonoid_n(X):=M(X\cup \lc\mapmonoid_{n-1}\rc ).
 \mlabel{eq:frakm}
 \end{equation}
The identity map on $X$ and the embedding $\iota_{n-2}$ induce an
injection
\begin{equation}
 \iota_{n-1}: X\cup \lc\mapmonoid_{n-2}\rc \hookrightarrow
    X\cup \lc \mapmonoid_{n-1} \rc,
\mlabel{eq:transet}
\end{equation}
which, by the functoriality of $M$, extends to an embedding (still
denoted by $\iota_{n-1}$) of free monoids
\begin{equation}
 \iota_{n-1}: \mapmonoid_{n-1} = M(X\cup \lc\mapmonoid_{n-2}\rc)\hookrightarrow
    M(X\cup \lc \mapmonoid_{n-1}\rc) = \mapmonoid_{n}. \mlabel{eq:tranm}
\end{equation}
\noindent This completes the construction of the directed system.
Finally we define the monoid\footnote{We adopt two notations for the
free operated monoid on $X$. The notation $\mapm{X}$, suggested by a
reviewer, is simpler and more natural, but $\mapmonoid(X)$ is
consistent with prior literature and occasionally, typographically
more pleasing, as in $\lc \mapmonoid(X) \rc$, when compared to $\lc
\mapm{X} \rc$.} $\mapmonoid(X)$ by
$$
\mapm{X} = \mapmonoid(X):=\dirlim \mapmonoid_n = \bigcup_{n\geq 0}
 \mapmonoid_n,
$$ where the identity of $\mapm{X}$ is (the
directed limit of) $\bfone$.

\begin{theorem} {\bf (\mcite{Guop}, Corollaries 3.6 and 3.7)}{\sl
\begin{enumerate}
\renewcommand\labelenumi{{\rm $(\theenumi)$}}
\item The monoid
$\mapm{X}$, with operator $P := \lc\;\rc$ and natural embedding
$j:X\to \mapm{X}$, is the free operated monoid on $X$.
\mlabel{it:mapsetm}
\item The unitary \paren{associative} $\bfk$-algebra
     $\bfk\mapm{X}$, with the $\bfk$-linear operator $P$ induced
     by $\lc\;\rc$ and the natural embedding $j:X
\to \bfk\mapm{X}$, is the free
 operated unitary $\bfk$-algebra on $X$.
    \mlabel{it:mapalgm}
\end{enumerate}
\mlabel{thm:freetm} }
\end{theorem}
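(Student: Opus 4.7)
The plan is to verify that $\mapm{X}$, with the map $\lc\;\rc$, is an operated monoid, then establish the universal property by constructing the required morphism $\free{f}$ inductively along the directed system $\{\iota_n\}$, and finally deduce the $\bfk$-algebra version by either applying the linearization functor $U \mapsto \bfk U$ or repeating the argument in the algebra category. The monoid structure on $\mapm{X} = \bigcup_n \mapmonoid_n$ is inherited from the directed limit (each $\iota_n$ is a monoid embedding). The operator $P = \lc\;\rc$ is well-defined: for $w \in \mapm{X}$, picking $n$ with $w \in \mapmonoid_n$, the symbol $\lc w \rc$ lies in $X \cup \lc\mapmonoid_n\rc$ and hence appears as a generator of $\mapmonoid_{n+1} \subseteq \mapm{X}$; independence from the choice of $n$ follows because $\iota_n$ is the identity on $\lc\mapmonoid_{n-1}\rc \subseteq \lc\mapmonoid_n\rc$ under the indexing convention in \eqref{eq:transet}.

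For the universal property, fix an operated monoid $(V, P_V)$ and a map $f: X \to V$. I would build a compatible family of monoid morphisms $\free{f}_n: \mapmonoid_n \to V$ by induction on $n$. Set $\free{f}_0$ to be the unique monoid morphism $M(X) \to V$ extending $f$. Assuming $\free{f}_{n-1}$ has been defined, the map $g_n: X \cup \lc\mapmonoid_{n-1}\rc \to V$ given by $g_n(x) = f(x)$ and $g_n(\lc u\rc) = P_V(\free{f}_{n-1}(u))$ extends, by freeness of $\mapmonoid_n = M(X \cup \lc\mapmonoid_{n-1}\rc)$, to a unique monoid morphism $\free{f}_n$. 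The central verification is the compatibility $\free{f}_n \circ \iota_{n-1} = \free{f}_{n-1}$; since both sides are monoid morphisms, it suffices to check on the generating set $X \cup \lc\mapmonoid_{n-2}\rc$, and on a generator $\lc u\rc$ with $u \in \mapmonoid_{n-2}$ the identity reduces, using the induction hypothesis $\free{f}_{n-1} \circ \iota_{n-2} = \free{f}_{n-2}$, to $P_V(\free{f}_{n-2}(u)) = P_V(\free{f}_{n-2}(u))$. The $\free{f}_n$ therefore glue into a monoid morphism $\free{f}: \mapm{X} \to V$.

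It remains to check that $\free{f}$ intertwines the operators and is unique. For $w \in \mapmonoid_n$, the element $\lc w \rc$ is a generator of $\mapmonoid_{n+1}$, so by construction $\free{f}(\lc w\rc) = g_{n+1}(\lc w\rc) = P_V(\free{f}_n(w)) = P_V(\free{f}(w))$. Uniqueness is immediate: any operated monoid morphism $h: \mapm{X} \to V$ extending $f$ must satisfy $h(\lc w\rc) = P_V(h(w))$, and together with its multiplicativity this recursively pins down $h$ on every $\mapmonoid_n$, forcing $h = \free{f}$.

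For part \eqref{it:mapalgm}, the cleanest route is to observe that the free $\bfk$-module functor $\bfk[-]$ is left adjoint to the forgetful functor from operated unitary $\bfk$-algebras to operated monoids, so applying it to the universal arrow $j: X \to \mapm{X}$ produces the free operated unitary $\bfk$-algebra on $X$; alternatively, repeat the inductive construction above with ``monoid'' replaced by ``unitary $\bfk$-algebra,'' using the freeness of each $\bfk\mapmonoid_n$ as a $\bfk$-algebra on $X \cup \lc\mapmonoid_{n-1}\rc$. I expect the main bookkeeping obstacle to be the compatibility identity $\free{f}_n \circ \iota_{n-1} = \free{f}_{n-1}$, which is conceptually clear but forces one to unwind the definitions of $\iota_{n-1}$ in \eqref{eq:tranm} carefully; once this is in hand, everything else is a formal consequence of the universal properties of the free monoids $\mapmonoid_n$.
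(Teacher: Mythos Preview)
Your argument is correct and is the standard inductive construction of the universal morphism along the directed system $\{\mapmonoid_n\}$. Note, however, that the paper does not actually prove this theorem: it is stated with a citation to \mcite{Guop} (Corollaries~3.6 and~3.7) and no proof is given in the present paper. So there is no ``paper's own proof'' to compare against here; your proposal simply supplies what the paper outsources to the reference, and the approach you take (extend $f$ to $\free{f}_0$ on $M(X)$, then recursively to $\free{f}_n$ on $\mapmonoid_n = M(X\cup\lc\mapmonoid_{n-1}\rc)$ via $\lc u\rc \mapsto P_V(\free{f}_{n-1}(u))$, check compatibility with the transition maps, and glue) is exactly the argument one finds in \mcite{Guop}. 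Your deduction of part~(\mref{it:mapalgm}) from part~(\mref{it:mapsetm}) via the monoid-algebra/forgetful adjunction is also fine and is the cleaner of the two routes you mention.
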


\begin{mdefn} An element $w \in \mapm{X}$ is called a {\bf
bracketed word on the generator set} $X$. If $X = \{x_1, \dots,
x_k\}$, we also write $\bfk\mapm{X}$ simply as $\bfk\mapm{x_1,
\dots, x_k}$. An element $\phi \in \bfk\mapm{X}$ but not in $\bfk$
is called a {\bf bracketed polynomial in} $X$. \mlabel{def:bracket}
\end{mdefn}

A nonunit element $w$ of $\mapm{X}= \mapmonoid(X)$ can be uniquely
expressed in the form
\begin{equation}
 w=w_1\cdots w_k \quad \mmbox{for some} k \mmbox{and some} w_i\in
 X\cup \lc \mapmonoid(X)\rc, 1\leq i\leq k.
\mlabel{eq:stde}
\end{equation}

\begin{mdefn}
For a nonunit element $w \in \mapm{X}=\mapmonoid(X)$, the
decomposition in Eq.\,(\mref{eq:stde}) is called the {\bf standard
decomposition} of $w$ and elements in $X\cup \lc \mapmonoid(X)\rc$
are called {\bf indecomposable}. The integer $\bre{w}:=k$ is called
the {\bf breadth} of $w$. The integer $d(w):=\min\{ n \,|\, w\in
\mapmonoid_n\}$ is called the {\bf depth} of $w$. We also consider
$\bfone$ (the empty product in $\mapm{X}$ and Eq.\,(\mref{eq:stde}))
to be indecomposable and define $\bre(\bfone) = d(\bfone) = 0$.
\mlabel{def:decomp}
\end{mdefn}

\begin{mremark}
Alternatively~\citep{Guop}, $\mapm{X}$ can be viewed as the set of
{\bf bracketed words} $w$ of the free monoid $M(X\cup \{\lc, \rc\})$
generated by $X\cup \{\lc, \rc\}$, in which the brackets $\lc\;\rc$
form balanced pairs, or more explicitly,
\begin{enumerate}
\item the total number of $\lc$ in the word $w$ equals to the total
number of $\rc$ in $w$; and
\item counting from the left to the right of $w$,
the number of $\lc$ is always greater than or equal to the number of
$\rc$.
\end{enumerate}
\mlabel{rk:string}
\end{mremark}
For example, for the set $X=\{x\}$, the element $w :=\lc x\rc x\lc
x\lc x\rc\rc$ is a bracketed word in $M(\{x,\lc,\rc\})$, with
$\bre{w} = 3$ and $d(w)=2$, while neither $\lc \lc x\rc$ (failing
the first condition) nor $\rc x \lc$ (failing the second condition)
is.

\subsection{Operated polynomial identity algebras}
We recall the concept of a polynomial identity algebra. Let
$\bfk\langle X \rangle$ be the free non-commutative
$\bfk$-algebra on a finite set $X= \{x_1, \dots, x_k\}$. A given
$\phi \in \bfk\ncpoly{X}$, $\phi \neq 0$, defines a category
$\Alg_\phi$ of algebras, whose objects are $\bfk$-algebras $R$
satisfying $ \phi(r_1,\dots,r_k)=0$ for all $r_1,\dots,r_k\in R$.
The non-commutative polynomial $\phi$ (formally, the equation
$\phi(x_1, \dots, x_k) = 0$, or its equivalent $\phi_1(x_1, \dots,
x_k) = \phi_2(x_1, \dots, x_k)$ if $\phi := \phi_1 - \phi_2$) is
classically called a {\bf polynomial identity} (PI) and we say $R$
is a {\bf PI-algebra} if $R$ {\bf satisfies} $\phi$ for some $\phi$.
For any set $Z$, we may define the free PI-algebra on $Z$ in
$\Alg_\phi$ by the obvious universal property.

We extend this notion to operated algebras. Let $\phi\in
\bfk\mapm{x_1,\cdots,x_k}$, let $R$ be an operated algebra, and let $r = (r_1,
\dots, r_k) \in R^k$. The substitution map $f_r:
\{x_1,\dots,x_k\}\to R$ that maps $x_i$ to $r_i$ induces a unique
morphism $\free{f_r}:\bfk\mapm{x_1,\dots,x_k}\to R$ of operated
algebras that extends $f_r$.\mynote{ by the universal property of
$\bfk\mapm{x_1,\dots,x_k}$ as a free operated algebra on
$\{x_1,\dots,x_k\}$.}  Let $\phi_R:R^k \to R$ be defined by
\begin{equation}{\phi}_R(r_1,\dots,r_k) :=
\free{f_r}(\phi(x_1,\dots,x_k)).\mlabel{eq:phibar}\end{equation}

\begin{mdefn}
{\rm Let $\phi\in \bfk\mapm{x_1,\cdots,x_k}$ and $R$ be an operated algebra. If
$${\phi}_R(r_1,\dots,r_k)=0, \quad \forall\ r_1,\dots,r_k\in R,$$
then $R$ is called a {\bf $\phi$-algebra}, the operator $P$ defining
$R$ is called a {\bf $\phi$-operator}, and $\phi$
\paren{or $\phi=0$} is called an {\bf operated polynomial identity}
\paren{OPI}.
 An {\bf operated polynomial identity algebra} or an {\bf OPI-algebra} is a
$\phi$-algebra for some $\phi \in \bfk\mapm{x_1,\dots,x_k}$\, for
some positive integer $k$. } \mlabel{de:pio}
\end{mdefn}

\begin{exam}\mlabel{ex:opi-a}
When $\phi:=\lc xy\rc -x\lc y\rc -\lc x\rc y$, then a
$\phi$-operator on a $\bfk$-algebra $R$ is a derivation on $R$,
usually denoted by $\delta$, and $R$ is an ordinary, possibly
non-commutative, differential algebra in which $\delta(a) = 0$ for
all $a \in \bfk$. 
\end{exam}

\begin{exam}\mlabel{ex:opi-b}
When $\phi:=\lc x\rc \lc y\rc -\lc x\lc y\rc \rc -\lc \lc x\rc y\rc
-\lambda\lc xy\rc $, where $\lambda \in \bfk$, then a
$\phi$-operator \paren{resp. $\phi$-algebra} is a Rota-Baxter
operator \paren{resp. Rota-Baxter algebra} of weight $\lambda$. We
denote such operators by $P$.
\end{exam}

\begin{exam}\mlabel{ex:opi-c}
When $\phi$ is from the noncommutative polynomial algebra
$\bfk\langle X\rangle$, then a $\phi$-algebra is an algebra with
polynomial identity, which we may view as an operated algebra where
the operator is the identity map.
\end{exam}

The next proposition is a consequence of the universal property of
free operated algebras and can be regarded as a special case of a
very general result on $\Omega$-algebras, where $\Omega$ is a set
called the signature and $\Omega$ represents a family of operations
on the algebra \citep[see e.g.][Chapter\,I, Proposition\,3.6]{Cohn}.
We caution the reader that there are two sets involved: the set $X$
in terms of which an OPI is expressed, and the set $Z$ on which the
free $\phi$-algebra is constructed.

\begin{mprop} {\bf \paren{\citeauthor{BN} \paren{\citeyear{BN}},
Theorem~3.5.6}} {\sl\   Let $\genset$ be a set, let $R =
\bfk\mapm{\genset}$, and let $j_\genset: \genset \to R$ be the
natural embedding. Let $X=\{x_1,\dots,x_k\}$ and $\phi \in
\bfk\mapm{X}$. Let ${\phi}_R: R^k \to R$ be as defined in {\em
Eq.\,(\mref{eq:phibar})}, let $I_\phi(\genset)$ be the operated
ideal of $R$ generated by the set
$$
\left\{\,{\phi}_R(r_1,\dots,r_k)\ |\ r_1,\dots,r_k\in R\,\right\},$$
and let $\pi_\phi: R \to
R/I_\phi(\genset)$ be the quotient morphism. Let
 $$ i_\genset:=\pi_\phi\circ j_\genset: \genset \to
 R/I_\phi(\genset).
 $$
 Then the quotient operated algebra
$R/I_\phi(\genset)$, together with $i_\genset$ and the operator $P$
induced by $\lc\,\rc$, is the free $\phi$-algebra on $\genset$.
\mlabel{pp:frpio}}
\end{mprop}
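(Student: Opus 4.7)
The plan is to verify directly the universal property defining the free $\phi$-algebra on $\genset$. The argument splits into two halves: first, confirm that the quotient $R/I_\phi(\genset)$ is itself a $\phi$-algebra; second, establish existence and uniqueness of morphisms out of this quotient into any other $\phi$-algebra.

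For the first half, let $\bar r_1,\dots,\bar r_k \in R/I_\phi(\genset)$ with any representatives $r_i \in R$. Since $\pi_\phi$ is a morphism of operated algebras and $\phi_R$ is built only out of multiplication and the operator $\lc\,\rc$, naturality of the substitution construction gives
\[
\phi_{R/I_\phi(\genset)}(\bar r_1,\dots,\bar r_k) = \pi_\phi\bigl(\phi_R(r_1,\dots,r_k)\bigr) = 0,
\]
because $\phi_R(r_1,\dots,r_k)$ lies in $I_\phi(\genset)$ by the very definition of that operated ideal. Hence $R/I_\phi(\genset)$ satisfies $\phi$.

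For the universal property, let $S$ be any $\phi$-algebra equipped with a set map $g:\genset\to S$. By Theorem~\ref{thm:freetm}(\ref{it:mapalgm}), there is a unique morphism of operated algebras $\tilde g:R\to S$ with $\tilde g\circ j_\genset = g$. It suffices to prove $I_\phi(\genset)\subseteq\ker\tilde g$; then $\tilde g$ descends through $\pi_\phi$ to a morphism $\bar g:R/I_\phi(\genset)\to S$ satisfying $\bar g\circ i_\genset = g$, and uniqueness of $\bar g$ follows because $i_\genset(\genset)$ generates $R/I_\phi(\genset)$ as an operated algebra (being the image of a generating set of $R$). Since $\ker\tilde g$ is itself an operated ideal, it is enough to check that $\tilde g\bigl(\phi_R(r_1,\dots,r_k)\bigr) = 0$ for every tuple $(r_1,\dots,r_k)\in R^k$. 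This reduces to the identity
\[
\tilde g\bigl(\phi_R(r_1,\dots,r_k)\bigr) = \phi_S\bigl(\tilde g(r_1),\dots,\tilde g(r_k)\bigr),
\]
after which the right-hand side vanishes because $S$ is a $\phi$-algebra.

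The main obstacle, and the step I would write out most carefully, is establishing this naturality identity. By Eq.~(\ref{eq:phibar}), $\phi_R(r_1,\dots,r_k) = \overline{f_r}(\phi)$, where $\overline{f_r}:\bfk\mapm{X}\to R$ is the unique operated-algebra extension of $x_i\mapsto r_i$; similarly $\phi_S\bigl(\tilde g(r_1),\dots,\tilde g(r_k)\bigr) = \overline{f_{\tilde g(r)}}(\phi)$. Both composites $\tilde g\circ \overline{f_r}$ and $\overline{f_{\tilde g(r)}}$ are morphisms of operated algebras $\bfk\mapm{X}\to S$ that send each $x_i$ to $\tilde g(r_i)$, so by the uniqueness clause in the universal property of $\bfk\mapm{X}$ (Theorem~\ref{thm:freetm}(\ref{it:mapalgm})), they coincide on all of $\bfk\mapm{X}$, hence in particular on $\phi$. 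This naturality is the one substantive point; once it is in hand, the rest of the argument is a routine assembly of the first-isomorphism theorem in the operated-algebra setting.
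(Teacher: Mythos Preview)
Your argument is correct and is the standard verification of the universal property. The paper itself does not include a proof of this proposition; it attributes the general statement to \citeauthor{BN} (\citeyear{BN}, Theorem~3.5.6) and refers the reader to \citet{GSZ} for a specific proof in this setting, so your write-up supplies what the paper deliberately omits. The one step you flag as substantive---the naturality identity $\tilde g\bigl(\phi_R(r_1,\dots,r_k)\bigr)=\phi_S\bigl(\tilde g(r_1),\dots,\tilde g(r_k)\bigr)$, handled via the uniqueness clause of Theorem~\ref{thm:freetm}(\ref{it:mapalgm})---is exactly the point that needs care, and your treatment of it is clean. Note that the same naturality argument (with $\pi_\phi$ in place of $\tilde g$) is what justifies the displayed equation in your first half, so you might remark that both halves rest on the same lemma.
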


For a specific proof of Proposition \mref{pp:frpio}, see
\citet{GSZ}. Proposition \mref{pp:frpio} shows that for any non-zero
$\phi \in \bfk\mapm{X}$, there is always a (free, associative, but
perhaps trivial) $\phi$-algebra. Thus the ``formulation'' below of
Rota's Problem would not be helpful.

\medskip
\begin{quote}
Find all non-zero $\phi\in \bfk\mapm{X}$ such that the OPI $\phi=0$
can be satisfied by {\it some} linear\\operator on {\it some}
associative algebra. \mlabel{pr:rota}
\end{quote}

While the construction in Proposition \mref{pp:frpio} is general, we
note that the free $\phi$-algebra may have hidden consequences.

\begin{exam}\mlabel{ex:freak}
Let $\phi(x,y) := \lc xy\rc - y\lc x\rc$. Let $\genset$ be a set and
let $Q = \bfk\mapm{\genset}/I_\phi(\genset)$ be the free
$\phi$-algebra with the operator $P$ induced by $\lc\,\rc$ on
$R=\bfk\mapm{\genset}$. Let $a,b,c \in Q$ be arbitrary. We must have
$P((ab)c)=P(a(bc))$. Applying the identity $\phi = 0$ on $Q$ to both
sides, we find that $(bc -cb)P(a) = 0$. We do not know
if $I_\phi(\genset)$ is completely prime\footnote{Recall that there are two notions of primeness for an ideal $I$ of a not-necessarily commutative ring $R$: $I$ ($I \ne R$) is {\it completely prime} if $uv\in I$ for $u, v\in R$ implies that either $u\in I$ or $v\in I$; and  $I$ is {\it prime} if for any ideals U and V, $UV \subseteq I$ implies either $U \subseteq I$ or $V \subseteq I$. When $R$ is commutative, the two definitions are equivalent.} or not, but if it is, then we would
have two possibilities: $Q$ is commutative, or $Q$ is not
commutative but $P(a)=0$ for all $a \in Q$. We also note that any
commutative algebra with the identity as operator is a
$\phi$-algebra.
\end{exam}

\section{Gr\"obner-Shirshov bases for free operated algebras}
\mlabel{sec:GS}

We now introduce the framework of Gr\"obner-Shirshov bases for the
free operated algebra $\bfk\mapm{X}$ on $X$. Shirshov basis was
first studied by \cite{Zh} and then by \cite{Sh-a,Sh-b}. For a
historic review, we refer the reader to the Introduction and
Bibliography sections of \cite{BCQ}, who gave a good survey of
methods to construct linear bases, and in particular,
Gr\"obner-Shirshov bases, in algebras under various combinations of
commutativity and associativity. \mcite{DK} has further details on
the relationship
 of Gr\"obner-Shirshov bases with the well-known
work of \cite{Buch} and \cite{Be}. We also provided a sketchy
summary in \cite{GS}. Recently, these bases have been obtained
by~\mcite{BCQ} for free {\it nonunitary} operated algebras. We will
consider the case of free {\it unitary} operated algebras.

With the notation in~\mcite{BCQ}, let $\bfk\langle X;\Omega\rangle$
denote the free {\em nonunitary} associative algebra on $X$ with a
set $\Omega$ of linear operators. When $\Omega$ consists only of one
unary operator $\lc\,\rc$, $\bfk\langle X;\Omega\rangle$ is the
non-unitary version of $\bfk\mapm{X}$ and may be constructed as
$\bfk\frakS$, where $$\frakS=\dirlim \frakS_n $$

\vspace{-7pt} \noindent with $\frakS_n$ defined recursively by
$$ \frakS_n:= S(X\cup \lc \frakS_{n-1}\rc), \quad \frakS_0:=S(X),
$$
and where, for any set $Y$, $S(Y)$ is the semigroup generated by
$Y$.

As is well-known, the difference between an associative algebra $A$
and its unitarization $\uni{A}$ is very simple: $\uni{A}=A\oplus
\bfk\,1$. For an operated algebra, the difference is much more
significant, as we can already see from their constructions. Since
we are studying operators on unitary algebras, we need to be careful
adapting results from~\mcite{BCQ}. For this reason and for
introducing notation, we establish here some results that
will lead to the Composition-Diamond Lemma (Theorem \ref{thm:CDL})
and construction of Gr\"obner-Shirshov bases for {\em unitary}
operated algebras.

\newcommand\mapmXstar{\mapm{X}^\star}
\vspace{-2pt}
\begin{mdefn} Let $\star$ be a symbol not in $X$ and
let $X^\star = X \cup \{\star\}$. By a {\bf $\star$-bracketed word
on} $X$, we mean any expression in $\mapm{X^\star}$ with exactly one
occurrence of $\star$. The set of all $\star$-bracketed words on $X$
is denoted by $\mapm{X}^\star$. For $q\in \mapmXstar$ and $u \in
 \mapm{X}$, we define
 \vspace{-5pt}
\begin{equation}
q|_{u}:=q|_{\star\mapsto u},\mlabel{eq:eva}
\end{equation}
\vspace{-5pt}\noindent to be the bracketed word obtained by
replacing the letter $\star$ in $q$ by $u$, and call $q|_u$ a
$u$-{\bf bracketed word on} $X$. Further, for $s=\sum_i c_i u_i\in
\bfk\mapm{X}$, where $c_i\in\bfk$ and $u_i\in \mapm{X}$, and $q \in
\mapmXstar$, we define \vspace{-5pt}
\begin{equation}
 q|_{s}:=\sum_i c_i q|_{u_i}\,,
\mlabel{eq:evasum}
\end{equation}
\vspace{-5pt}\noindent and extend by linearity to define the symbol
$q|_s$ for any $q \in \bfk\mapmXstar$. {Note that $q|_s$ is in
general not a bracketed word but a bracketed polynomial.}
\end{mdefn}
\vspace{-2pt} This process is the same as the process of replacing
subterms in \citeauthor{BN} (\citeyear[Definition\,3.1.3]{BN}). We
note the following simple relationship between operator replacement
and ideal generation.

\begin{lemma}{\sl
Let $S$ be a subset of\, $\bfk\mapm{X}$. Let\, $\Id(S)$ be the
operated ideal of $\bfk\mapm{X}$ generated by $S$. Then
$$\Id(S)=\left\{ \sum_{i=1}^k c_i q_i|_{s_i}\,\Big|\ c_i\in \bfk,
q_i\in \mapmXstar, s_i\in S, 1\leq i\leq k, k\geq 1\right\}.$$
\mlabel{lem:repgen}}
\end{lemma}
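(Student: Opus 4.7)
The plan is to prove the two inclusions $J\subseteq\Id(S)$ and $\Id(S)\subseteq J$, where $J$ denotes the set on the right-hand side of the claimed equality. Both directions rely on one bookkeeping identity: substitution commutes with the operated algebra structure, in the sense that for $q\in\mapmXstar$, $r\in\bfk\mapm{X}$ and $s\in\bfk\mapm{X}$, one has $r\cdot q|_s = (r\cdot q)|_s$, $q|_s\cdot r = (q\cdot r)|_s$, and $\lc q|_s\rc = \lc q\rc|_s$, where $r\cdot q$, $q\cdot r$, and $\lc q\rc$ all belong to $\bfk\mapmXstar$ because they still contain exactly one occurrence of $\star$ (namely the one inherited from $q$). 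These identities are immediate from the construction of $\mapm{X^\star}$ as a free operated monoid on $X^\star$, once one checks that the placeholder $\star$ is never multiplied or bracketed away during these operations.

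For $J\subseteq\Id(S)$, it suffices to show that $q|_s\in\Id(S)$ for every $q\in\mapmXstar$ and $s\in S$, since $J$ is then contained in the $\bfk$-span of $\Id(S)$. I would argue this by induction on the depth $d(q)$. In the base case $d(q)=0$, the standard decomposition of $q$ in $\mapmonoid_0(X^\star)=M(X^\star)$ is $q=u\star v$ with $u,v\in M(X)$, so $q|_s = u\,s\,v\in\Id(S)$. For the inductive step, one decomposes $q=q_1\cdots q_k$ into indecomposables in $X^\star\cup\lc\mapmonoid(X^\star)\rc$. The unique factor $q_j$ containing $\star$ is either $\star$ itself (in which case $q|_s = q_1\cdots q_{j-1}\,s\,q_{j+1}\cdots q_k\in\Id(S)$) or of the form $\lc q'\rc$ for some $q'\in\mapmXstar$ of strictly smaller depth. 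In the latter case $q|_s = q_1\cdots q_{j-1}\lc q'|_s\rc q_{j+1}\cdots q_k$, and $q'|_s\in\Id(S)$ by induction, so $\lc q'|_s\rc\in\Id(S)$ because $\Id(S)$ is operated, and finally $q|_s\in\Id(S)$ because $\Id(S)$ is a two-sided ideal.

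For the reverse inclusion, the strategy is to verify directly that $J$ is an operated two-sided ideal of $\bfk\mapm{X}$ that contains $S$, and conclude by minimality of $\Id(S)$. Containment $S\subseteq J$ comes from taking $q=\star$. Closure under $\bfk$-linear combinations is built into the definition of $J$. Closure under left multiplication by $r\in\bfk\mapm{X}$ follows from $r\cdot(q|_s) = (r\cdot q)|_s$ together with the fact that $r\cdot q\in\bfk\mapmXstar$, and right multiplication is symmetric. Closure under the operator follows similarly from $\lc q|_s\rc = \lc q\rc|_s$ with $\lc q\rc\in\mapmXstar$.

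The only real obstacle, and the one worth being careful about, is the bookkeeping behind the compatibility identities in the first paragraph: although they are intuitively clear, they ultimately depend on the precise recursive construction of $\mapm{X^\star}$ and on the fact that substitution of $s$ for $\star$ descends well to the directed colimit defining $\mapm{X}$. Everything else reduces to routine verification once this substitution lemma is cleanly stated. A small but worthwhile sanity check is that extending $q\mapsto q|_s$ by $\bfk$-linearity on both arguments is well-defined, so that the formula $q|_s\in\bfk\mapm{X}$ in Eq.~(\ref{eq:evasum}) makes sense and the three compatibility identities hold for general $q\in\bfk\mapmXstar$ and $s\in\bfk\mapm{X}$.
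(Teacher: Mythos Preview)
Your proposal is correct and follows essentially the same two-inclusion strategy as the paper's own proof: the paper simply asserts that the right-hand side is contained in $\Id(S)$ (``clear'') and that the right-hand side is already an operated ideal containing $S$. Your version fills in the details the paper omits---the induction on depth for the first inclusion and the substitution compatibilities for the second---so it is a more explicit rendition of the same argument rather than a different approach.
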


\begin{proof}
It is clear that the right hand side is contained in the left hand
side. On the other hand, the right hand side is already an operated
ideal of $\bfk\mapm{X}$ that contains $S$.
\end{proof}

\newcommand\mapmXss{\mapm{X}^{\star_1,\star_2}}
\begin{mdefn} For distinct symbols $\star_1,\star_2$
not in $X$, let $X^{\star_1,\star_2} = X \cup \{\star_1, \star_2\}$.
We define a {\bf $(\star_1, \star_2)$-bracketed word on $X$} to be a
bracketed word in $\mapm{X^{\star_1,\star_2}}$ with exactly one
occurrence of $\star_1$ and exactly one occurrence of $\star_2$. The
set of $(\star_1, \star_2)$-bracketed words on $X$ is denoted by
$\mapmXss$. For $q\in \mapmXss$ and $u_1,u_2\in \bfk\mapm{X}$, we
define \vspace{-6pt}
\begin{equation}
q|_{u_1,\ u_2}= q|_{\star_1\mapsto u_1,\star_2\mapsto u_2},
\mlabel{eq:2star1}
\end{equation}
\vspace{-6pt} to be the bracketed word obtained by
replacing the letter $\star_1$ (resp. $\star_2$) in $q$ by $u_1$ (resp. $u_2$) and call it a {\bf $(u_1, u_2)$-bracketed word on
$X$}.\end{mdefn} A $(u_1, u_2)$-bracketed word on $X$ can also be
recursively defined by \vspace{-6pt}
\begin{equation}
q|_{u_1,u_2}:=(q^{\star_1}|_{u_1})|_{u_2}, \mlabel{eq:2star2}
\end{equation}
\vspace{-6pt} where $q^{\star_1}$ is $q$ when $q$ is regarded as a
$\star_1$-bracketed word on the set $X^{\star_2}$. Then
$q^{\star_1}|_{u_1}$ is in $\mapm{X}^{\star_2}$ and we can apply
Eq.~(\mref{eq:eva}). Similarly, treating $q$ first as a
$\star_2$-bracketed word $q^{\star_2}$ on the set $X^{\star_1}$, we
have \vspace{-6pt}
\begin{equation}
q|_{u_1,u_2}:=(q^{\star_2}|_{u_2})|_{u_1}. \mlabel{eq:2star3}
\end{equation}

\begin{mdefn} A {\bf monomial ordering} on $\mapm{X}$ is a
well-ordering $\leq$ on $\mapm{X}$ satisfying the two conditions:
\vspace{-6pt}\begin{equation} \bfone \leq u; \quad  u < v
\Rightarrow q|_u < q|_v, \quad \text{\ for\ all\ } u, v \in \mapm{X}
\text{\ and\ all\ } q \in \mapmXstar.\mlabel{eq:ordering}
\end{equation}
\vspace{-6pt}
Here, as usual, we denote $u<v$ if $u\leq v$ but $u\neq v$. Given a monomial ordering $\leq$ and a bracketed
polynomial $s\in \bfk\mapm{X}$, we let $\bar{s}$ denote the leading
bracketed word (monomial) of $s$. If the coefficient of $\bar{s}$ in
$s$ is $1$, we call $s$ {\bf monic with respect to the monomial
order $\leq$}\,.
\end{mdefn}

Examples of such orderings will be considered later in this
paper.
For now, we fix a monomial ordering $\leq$ on $\mapm{X}$.

\begin{lemma} {\sl Let $s, s'\in \bfk\mapm{X}$, let $t\in \mapm{X}$
and suppose $\,\bar{s}<t$. Then
\smallskip
\begin{enumerate}
\renewcommand\labelenumi{{\rm $(\theenumi)$}}
\item
For any $q \in \mapmXstar$, we have $\
\overline{q|_s}=q|_{\overline{s}} < q|_t$\,. \mlabel{it:lead1}
\smallskip \item For $q \in \mapmXss$, we have
$$\overline{q|_{s,\,s'}} = \overline{q|_{\bar{s},\,s'}}
=\overline{q|_{s,\,\overline{s'}}} = q|_{\bar{s},\, \overline{s'}} <
q|_{t,\, \overline{s'}} \quad \text{\ and\ }\quad
\overline{q|_{s',\, s}}<q|_{\overline{s'},\, t}\,.
\mlabel{eq:lead}$$ \mlabel{it:lead2} \mlabel{it:lead3}
\end{enumerate}
\mlabel{lem:lead}}
\end{lemma}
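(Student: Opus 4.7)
The plan is to reduce both statements to the defining property of a monomial ordering --- namely $u<v \Rightarrow q|_u<q|_v$ for any $q\in\mapmXstar$ --- applied to one-star bracketed words. Once part~(1) is known, part~(2) follows from two successive applications of it, organized via the recursive identities in Eqs.~\mref{eq:2star2} and \mref{eq:2star3}.

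For part~(1), I would write $s=\sum_i c_i u_i$ in canonical form with distinct $u_i\in\mapm{X}$, nonzero $c_i\in\bfk$, and leading monomial $u_{i_0}=\bar s$. Equation~\mref{eq:evasum} gives $q|_s=\sum_i c_i\,q|_{u_i}$. The ordering axiom~\mref{eq:ordering}, applied to the $\star$-bracketed word $q$, yields $q|_{u_i}<q|_{u_{i_0}}$ for each $i\ne i_0$; in particular $w\mapsto q|_w$ is strictly order-preserving on $\mapm{X}$, hence injective, so the monomials $q|_{u_i}$ are pairwise distinct and $\overline{q|_s}=q|_{u_{i_0}}=q|_{\bar s}$. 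One more application of~\mref{eq:ordering} using $\bar s<t$ gives $q|_{\bar s}<q|_t$.

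For part~(2), write $s'=\sum_j d_j v_j$ analogously, with leading monomial $v_{j_0}=\overline{s'}$. Bilinearity gives $q|_{s,s'}=\sum_{i,j}c_i d_j\,q|_{u_i,v_j}$. Fixing $j$ and using Eq.~\mref{eq:2star3}, one has $q|_{u_i,v_j}=p_j|_{u_i}$, where $p_j:=q^{\star_2}|_{v_j}$ is an element of $\mapmXstar$ after relabelling $\star_1$ as $\star$; part~(1) applied to $p_j$ therefore gives $q|_{u_i,v_j}<q|_{u_{i_0},v_j}$ for $i\ne i_0$. Next fix $i_0$ and use Eq.~\mref{eq:2star2}: $q|_{u_{i_0},v_j}=p'|_{v_j}$ for $p':=q^{\star_1}|_{u_{i_0}}$, a $\star_2$-bracketed word on $X$, and part~(1) gives $q|_{u_{i_0},v_j}<q|_{u_{i_0},v_{j_0}}$ for $j\ne j_0$. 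Combining these shows $\overline{q|_{s,s'}}=q|_{\bar s,\overline{s'}}$, and the two middle equalities $\overline{q|_{\bar s,s'}}=\overline{q|_{s,\overline{s'}}}=q|_{\bar s,\overline{s'}}$ are the special cases $s=\bar s$ or $s'=\overline{s'}$ of the same calculation. The strict inequality $q|_{\bar s,\overline{s'}}<q|_{t,\overline{s'}}$ comes from one further use of~\mref{eq:ordering} on $q^{\star_2}|_{\overline{s'}}\in\mapmXstar$ with $\bar s<t$; the last inequality $\overline{q|_{s',s}}<q|_{\overline{s'},t}$ is symmetric, obtained by swapping the roles of the two stars.

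The only real --- and frankly minor --- obstacle is the bookkeeping that justifies partial substitution: the ordering axiom is stated only for $\star$-bracketed words, whereas elements of $\mapmXss$ carry two distinct stars. Equations~\mref{eq:2star2} and \mref{eq:2star3} make the reduction legitimate, and since $\star_1,\star_2,\star$ are merely distinguished symbols the renaming is harmless, so no further theoretical input beyond part~(1) is required.
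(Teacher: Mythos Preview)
Your proof is correct and follows essentially the same approach as the paper: reduce everything to the defining monotonicity axiom~\mref{eq:ordering} via the recursive identities~\mref{eq:2star2} and~\mref{eq:2star3}. The only cosmetic difference is organizational --- the paper substitutes one variable at a time (first showing $\overline{q|_{s_i,s'}}=q|_{s_i,\overline{s'}}$ for each $i$, then ordering these over $i$), while you expand $q|_{s,s'}$ fully as a double sum and argue directly that every term with $(i,j)\neq(i_0,j_0)$ is strictly below $q|_{u_{i_0},v_{j_0}}$; both arguments are equivalent.
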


\vspace{-20pt}
\begin{proof}
(\mref{it:lead1}) Let $s=\sum_{i=1}^k a_i s_i$ where $0\neq a_i\in
\bfk$ and $s_i\in \mapm{X}$ with $s_1>\cdots>s_k$. Thus
$\overline{s}=s_1$.  By definition, $ q|_s=\sum_{i=1}^k a_i
q|_{s_i}$ and by Eq.\,(\mref{eq:ordering}) for a monomial order,
$q|_{s_1}>\cdots
> q|_{s_k}$. Thus $\overline{q|_s} = q|_{s_1} = q|_{\overline{s}}.$
The inequality follows by the property in Eq.\,(\mref{eq:ordering}) of
a monomial order.

\medskip \noindent (\mref{it:lead2}) Let $s=\sum_{i=1}^k a_i s_i\in
\bfk\mapm{X}$ be as in Part (\mref{it:lead1}). Thus $\bar{s} = s_1$.
By Eq.\,(\mref{eq:2star2}) and Part (\mref{it:lead1}), we have
\begin{equation}
\overline{q|_{s_i,\,s'}} = \overline{(q^{\star_1}|_{s_i})|_{s'}}=
(q^{\star_1}|_{s_i})|_{\overline{s'}}= q_{s_i,\,\overline{s'}},
\qquad (1 \leq i \leq k).\mlabel{eq:lead4a} \end{equation}

\noindent By Eq.\,(\mref{eq:2star3}) and the property in
Eq.\,(\mref{eq:ordering}) of a monomial order, we have \vspace{-6pt}
\begin{equation} q|_{s_1,\,\overline{s'}} =
(q^{\star_2}|_{\overline{s'}})|_{s_1} > \cdots
> (q^{\star_2}|_{\overline{s'}})|_{s_k} = q|_{s_k,\,\overline{s'}}.
\mlabel{eq:lead4b}\end{equation} \vspace{-6pt}\noindent It follows
from Eqs.\,(\mref{eq:lead4a}) and (\mref{eq:lead4b}) that
$\overline{q|_{s_1,\,s'}} > \cdots > \overline{q|_{s_k,\,s'}}$ and
since by linearity, $q|_{s,\, s'} = \sum_{i=1}^k a_i q|_{s_i,\,
s'}$, we have \vspace{-6pt}\begin{equation}\overline{q|_{s,\,s'}} =
\max\,\{ \overline{q|_{s_i,\,s'}} \mid 1 \leq i \leq k\,\} =
\overline{q|_{s_1,\,s'}} =q|_{s_1,\,\overline{s'}} =
q|_{\bar{s},\,\overline{s'}}.\mlabel{eq:lead4c}\end{equation}\vspace{-6pt}
\noindent The first equality in Part (\mref{it:lead2}) follows by
replacing $s$ with $\bar{s}$ in Eq.\,(\mref{eq:lead4c}), and the
second by replacing $s'$ with $\overline{s'}$. By the equalities
just proved and Eq.\,(\mref{eq:2star3}), we have
$$\overline{q|_{s,\,s'}} = q|_{\bar{s}, \overline{s'}} =
(q^{\star_2}|_{\overline{s'}})|_{\bar{s}} <
(q^{\star_2}|_{\overline{s'}})|_t = q|_{t,\, \overline{s'}}.$$
\noindent The other inequality follows similarly.
\end{proof}

The following concepts of intersection and including compositions
are adapted from~\mcite{BCQ}. For operated algebras, they are
analogous to the concepts of overlap and inclusion $S$-polynomials
for associative algebras, as in~\mcite{Be}. Here we pay careful
attention to ensure these concepts are well-defined.

\begin{mdefn}
{\rm Let $f, g \in
\bfk\mapm{X}$ be two bracketed polynomials monic with respect to
$\leq$.
\begin{enumerate}
\setlength\itemsep{3pt}
\item \label{def:int}
If there exist $\mu,\nu, w\in \mapm{X}$ such that
$w=\bar{f}\mu=\nu\bar{g}$ with
$\bre{w}< \bre{\bar{f}}+\bre{\bar{g}}$, then we define
$$(f,g)_{w}:=(f,g)^{\mu,\nu}_w:=f\mu-\nu g$$
and call it the {\bf intersection composition} of $f$ and $g$ with
respect to $(\mu,\nu)$.
\item \label{def:inc} If there exist a $q\in \mapmXstar$ and $w \in \mapm{X}$
such that $w =\bar{f}=q|_{\bar{g}}$,
then we define
$$(f,g)^{q}_w:=f-q|_{g}$$ and call it an {\bf including
composition of $f$ and $g$ with respect to $q$}.
\end{enumerate}
} \mlabel{de:comp}
\end{mdefn}

\begin{mremark}\label{ex:inc}
{We note that the superscripts $\mu,\nu$ for the intersection composition
$(f,g)^{\mu,\nu}_w$ is not necessary, since $\mu$ and $\nu$ are uniquely
defined by $w$, indeed, by $\bre{w}$, because of the uniqueness of the standard decompositions of $\bar{f}, \mu, \nu, \bar{g}$. However,
the superscript $q$ in the including
composition $(f,g)_w^q$ is needed to ensure that the notation is well-defined.
For example, if $\bar{g}$
occurs in $\bar{f}$ more than once, we might have two different
$q$'s that give the same $q|_{\bar{g}}$ but different including
compositions. To illustrate, take $f=xyx, g=x-1\in \bfk\mapm{x,y}$ and
$q_1=\star yx, q_2=xy \star\in \mapmXstar$. Then we have
$$xyx=w=\bar{f}=q_1|_{\bar{g}}=\bar{f}=q_2|_{\bar{g}}.$$ But
$(f,g)_w^{q_1}=f-q_1|_{g}=yx$ and $(f,g)_w^{q_2}=f-q_2|_g=xy$ are
not the same.}
\end{mremark}

\begin{mremark}
If Definition \ref{de:comp}(\ref{def:int}) holds with $\mu=1$, then
the intersection composition is also an including composition. For
if $\bar{f}=\nu\bar{g}$, then $\bar{f}=q|_{\bar{g}}$ where
$q=\nu\star$. Hence $(f,g)_{\bar{f}}^{1,\nu}=(f,g)_{\bar{f}}^q\,.$
However, if $\nu=1$ but $\mu \ne 1$, then since $\overline{f}\mu =
\overline{g}$, there is no $q \in \mapm{X}^\star$ satisfying
$\overline{f} = q|_{\overline{g}}$. As should have been clear,
Definition \ref{de:comp}(\ref{def:inc}) is not symmetric with
respect to $f$ and $g$.
\end{mremark}

\begin{mdefn}
{\rm Let $S$ be a set of monic bracketed polynomials and let $w\in
\mapm{X}$.
\begin{enumerate}
\item
For $u, v\in \bfk\mapm{X}$, we call $u$ and $v$ {\bf congruent
modulo $(S,w)$} and denote this by
$$ u\equiv v \ \mod (S,w) $$
if $u-v=\sum_ic_iq_i|_{s_i}, $
 with $c_i\in \bfk$,  $q_i\in \mapmXstar$, $s_i\in S$
and $q_i|_{\overline{s_i}}< w$.
\item
For $f, g\in \bfk\mapm{X}$ and suitable $w, \mu,\nu$ or $q$ that give
an intersection composition $(f,g)_w^{\mu,\nu}$ or an including
composition $(f,g)_w^q$, the composition is called {\bf trivial
modulo $(S,w)$} if
$$
(f,g)_w^{\mu,\nu} \text{\ or\ } (f,g)_w^q\equiv 0 \ \mod (S,w).
$$
\item
The set $S \subseteq \bfk\mapm{X}$ is a {\bf
Gr\"{o}bner-Shirshov basis} if, for all $f,g\in S$, all
intersection compositions $(f,g)_w^{\mu,\nu}$ and all including
compositions $(f,g)_w^q$ are trivial modulo $(S,w)$.
\end{enumerate}
} \mlabel{de:GS}
\end{mdefn}

\begin{mdefn} \begin{enumerate}
\item
Let $u, w$ be two  bracketed words in $\mapm{X}$. We call $u$ a {\bf
subword} of $w$ if $w$ is in the operated ideal of $\mapm{X}$
generated by $u$. In terms of $\star$-words, $u$ is a subword of $w$
if there is a $q\in \mapmXstar$ such that $w=q|_u$.
{A subword $u$ of $w$
is a subword when viewed as a string in the free monoid $M(X\cup\{\lc, \rc\})$ as in
Remark~\mref{rk:string}:
namely the string of letters forming $u$ is a substring of the string of
letters forming $w$.}
\item
Let $u_1$ and $u_2$ be two subwords of $w$.
\begin{enumerate}
\item $u_1$ and $u_2$ are called {\bf separated} if there is $q\in
\mapmXss$ such that $w=q|_{u_1,u_2}$. In terms of strings in
$M(X\cup\{\lc, \rc\})$, this means that the substrings $u_1$ and
$u_2$ of $w$ have no overlap.
\item $u_1$ and $u_2$ are called {\bf overlapping} if there are
subwords $a, b, c$ of $w$ such that $au_1=c=u_2b$ or $au_2=c=u_1b$ with $\bre{c}<\bre{u_1}+\bre{u_2}$.
In terms of strings in $M(X\cup\{\lc, \rc\})$, this means that the
strings of $u_1$ and $u_2$ have an overlap.
\end{enumerate}
\end{enumerate}\mlabel{def:subwords}
\end{mdefn}
We note there is a third relative location of $u_1$ and $u_2$ in $w$, namely either $u_1$ or $u_2$ is nested in (i.e., a subword of) the other.

\begin{mprop} \mlabel{prop:separate} Let $S \subseteq \bfk\mapm{X}$.
{\sl Let $s_1,s_2 \in S$ and suppose there exist $q_1, q_2 \in
\mapmXstar$ and $w \in \mapm{X}$ such that $w =
q_1|_{\overline{s_1}} = q_2|_{\overline{s_2}}$, by which we may view
$\overline{s_1}, \overline{s_2}$ as subwords of $w$ and suppose as
such, $\overline{s_1}$ and $\overline{s_2}$ are separated in $w$.
Then $q_1|_{s_1} \equiv q_2|_{s_2} \mod(S,w).$}
\end{mprop}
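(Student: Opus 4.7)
The plan is to show that the two ways of "substituting" into $w$ at the two separated locations differ only by bracketed polynomials $q'|_s$ with $s \in S$ and $q'|_{\bar s} < w$, so the difference is $\equiv 0 \mod (S,w)$.

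First I would unpack what separation buys us. Because $\overline{s_1}$ and $\overline{s_2}$ are separated subwords of $w$, Definition~\mref{def:subwords} provides a single $q \in \mapmXss$ with $w = q|_{\overline{s_1},\,\overline{s_2}}$; moreover, treating this $q$ as a $\star_1$-bracketed word on $X^{\star_2}$ (and symmetrically) gives
$$q_1 \;=\; q|_{\star,\,\overline{s_2}}, \qquad q_2 \;=\; q|_{\overline{s_1},\,\star},$$
up to the uniqueness of the positions of $\overline{s_1}$ and $\overline{s_2}$ in $w$. Consequently $q_1|_{s_1} = q|_{s_1,\,\overline{s_2}}$ and $q_2|_{s_2} = q|_{\overline{s_1},\,s_2}$, via the formulas (\mref{eq:2star2}) and (\mref{eq:2star3}).

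Next, I would write $s_i = \overline{s_i} + t_i$ with $t_i := s_i - \overline{s_i}$, so every bracketed word appearing in $t_i$ is strictly less than $\overline{s_i}$. A direct linear expansion and cancellation of the common term $q|_{\overline{s_1},\,\overline{s_2}}$ yields
$$q_1|_{s_1} - q_2|_{s_2} \;=\; q|_{s_1,\,\overline{s_2}} - q|_{\overline{s_1},\,s_2} \;=\; q|_{t_1,\,s_2} \;-\; q|_{s_1,\,t_2},$$
after adding and subtracting $q|_{s_1,s_2}$ and using bilinearity in the two $\star$-slots.

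Finally, I would check that each of the two resulting sums lies in the right-hand side of Definition~\mref{de:GS}(1). Writing $t_1 = \sum_i a_i u_i$ with $u_i \in \mapm{X}$ and $u_i < \overline{s_1}$, set $q'_i := (q^{\star_1}|_{u_i}) \in \mapmXstar$ (viewing $\star_2$ as the remaining marker); then $q|_{t_1,\,s_2} = \sum_i a_i\, q'_i|_{s_2}$, and Lemma~\mref{lem:lead}(\mref{it:lead2}) applied to $u_i < \overline{s_1}$ gives the strict inequality
$$q'_i|_{\overline{s_2}} \;=\; q|_{u_i,\,\overline{s_2}} \;<\; q|_{\overline{s_1},\,\overline{s_2}} \;=\; w.$$
The symmetric argument handles $q|_{s_1,\,t_2}$ using the other half of Lemma~\mref{lem:lead}(\mref{it:lead2}). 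Summing, $q_1|_{s_1} - q_2|_{s_2}$ has the required form, proving the congruence.

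The only subtle point — and the step I would verify most carefully — is the very first one: that separation really produces a single common $q \in \mapmXss$ with the required factorizations of $q_1$ and $q_2$. This is intuitively clear from the string picture in Remark~\mref{rk:string}, but it relies on the fact that $q_1$'s $\star$ lies disjoint from the occurrence of $\overline{s_2}$ (and vice versa), which is precisely the content of "separated." Once that is nailed down, the remainder is a bilinear manipulation plus one application of Lemma~\mref{lem:lead}.
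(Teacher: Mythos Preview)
Your proof is correct and is essentially the same as the paper's: both introduce the common $(\star_1,\star_2)$-bracketed word $q$ coming from separation, add and subtract $q|_{s_1,s_2}$ to rewrite the difference as $q|_{t_1,s_2}-q|_{s_1,t_2}$ (with $t_i=s_i-\overline{s_i}$), and then bound the leading terms of the summands below $w$ using the monomial order. The paper carries out the last step by a direct appeal to Eqs.~(\mref{eq:2star2})--(\mref{eq:2star3}) rather than citing Lemma~\mref{lem:lead}(\mref{it:lead2}), but the content is identical.
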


\begin{proof} Let $q\in
\mapmXss$ be the $(\star_1,\star_2)$-bracketed word obtained by
replacing {\it this} occurrence of $\overline{s_1}$  in $w$ by
$\star_1$ and {\it this} occurrence of $\overline{s_2}$ in $w$ by
$\star_2$. Then we have
$$q^{\star_1}|_{\overline{s_1}} = q_2, \quad
q^{\star_2}|_{\overline{s_2}} = q_1, \quad {\rm\ and\ }\quad
q|_{\overline{s_1},\
\overline{s_2}}=q_1|_{\overline{s_1}}=q_2|_{\overline{s_2}} =w,
$$
where in the first two equalities, we have identified
$\mapm{X}^{\star_2}$ and $\mapm{X}^{\star_1}$ with
$\mapmXstar$. Let $$s_1=\overline{s_1}+\sum_i c_{i}u_{i}, \qquad s_2=\overline{s_2}+\sum_j d_{j}v_{j}$$ where $c_{i}, d_{j}\in \bfk$,
$u_{i}, v_{j}\in \mapm{X}$, and
$\overline{u_i} = u_{i}<\overline{s_1}$ and $\overline{v_j}=
v_{j}< \overline{s_2}$. Then by the linearity of $s_1,s_2$ in $q|_{s_1,s_2}$,
we have
\vspace{-20pt}
\begin{eqnarray*} q_2|_{ s_2}-q_1|_{
s_1}&=&(q^{\star_1}|_{\overline{s_1}})|_{s_2}
-(q^{\star_2}|_{\overline{s_2}})|_{s_1}\\
&=&
q|_{\overline{s_1}, \ s_2}-q|_{s_1, \ \overline{s_2}}\\
&=&(-q|_{-\overline{s_1}, \ s_2}+q|_{s_1, \
s_2})+(q|_{s_1,\ -\overline{s_2}}-q|_{s_1, \ s_2})\\
&=&-q|_{s_1-\overline{s_1}, \ s_2}+q|_{s_1,\
s_2-\overline{s_2}}\\
&=&-(q^{\star_1}|_{s_1-\overline{s_1}})|_{s_2} +(
q^{\star_2}|_{s_2-\overline{s_2}})|_{s_1}\\
&=&-\sum_i c_i(q^{\star_1}|_{u_{i}})|_{s_2} +\sum_j
d_{j}(q^{\star_2}|_{v_{j}})|_{s_1},
\end{eqnarray*}
{Since $u_i = \overline{u_i}$ and $v_j = \overline{v_j}$, by Eqs.\,(\ref{eq:2star2}) and (\ref{eq:2star3}), we have
$$(q^{\star_1}|_{u_{i}})|_{\overline{s_2}} =q_{\overline{u_{i}},\,
\overline{s_2}}<q_{\overline{s_1},\,\overline{s_2}}=w \text{\ and\ }
(q^{\star_2}|_{v_{j}})|_{\overline{s_1}} =q_{\overline{s_1},
\,\overline{v_{j}}}<q_{\overline{s_1},\,\overline{s_2}}=w.$$
}
\noindent
This means
$$
q_1|_{s_1}\equiv q_2|_{s_2} \mod (S,w),
$$
completing the proof.
\end{proof}

\begin{lemma}{\sl Let $\leq$ be a monomial ordering of $\bfk\mapm{X}$ and
let $S$ be a set of monic bracketed polynomials in $\bfk\mapm{X}$.
Then the following conditions on $S$ are equivalent:
\begin{enumerate}
\renewcommand\labelenumi{{\rm $(\theenumi)$}}
\item \mlabel{it:Scong1} $S$ is a Gr\"{o}bner-Shirshov basis.
\item \mlabel{it:Scong2} For every $s_1, s_2 \in S$ and $w \in \mapm{X}$ for which there
exist $q_1, q_2 \in \mapm{X}^{\star}$  such
that $w = q_1|_{\overline{s_1}}= q_2|_{\overline{s_2}}$, we have
$q_1|_{s_1} \equiv q_2|_{s_2} \mod (S, w)$. 
\end{enumerate} \mlabel{lem:Scong}}
\end{lemma}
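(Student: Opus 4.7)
The plan is to prove the equivalence (\mref{it:Scong1}) $\Leftrightarrow$ (\mref{it:Scong2}) with the bulk of the work in (\mref{it:Scong1}) $\Rightarrow$ (\mref{it:Scong2}), which rests on a case analysis of how the two designated occurrences of $\overline{s_1}$ and $\overline{s_2}$ sit inside $w$.

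For (\mref{it:Scong2}) $\Rightarrow$ (\mref{it:Scong1}), I would simply observe that both kinds of compositions in Definition~\mref{de:comp} are instances of the hypothesis of (\mref{it:Scong2}). Given an intersection composition $(f,g)^{\mu,\nu}_w = f\mu - \nu g$ with $w = \overline{f}\mu = \nu\overline{g}$, set $q_1 := \star\mu$ and $q_2 := \nu\star$; then $q_1|_{\overline{f}} = w = q_2|_{\overline{g}}$ and $q_1|_f - q_2|_g = (f,g)^{\mu,\nu}_w$. For an including composition $(f,g)^q_w = f - q|_g$ with $w = \overline{f} = q|_{\overline{g}}$, take $q_1 := \star$ and $q_2 := q$, so $q_1|_f - q_2|_g = (f,g)^q_w$. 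In either case (\mref{it:Scong2}) applied to this pair yields the required triviality modulo $(S,w)$.

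For (\mref{it:Scong1}) $\Rightarrow$ (\mref{it:Scong2}), given $s_1,s_2 \in S$ and $q_1,q_2 \in \mapmXstar$ with $w = q_1|_{\overline{s_1}} = q_2|_{\overline{s_2}}$, the two positioned subwords $\overline{s_1}$ and $\overline{s_2}$ of $w$ fall into exactly one of three configurations (cf.\ Definition~\mref{def:subwords}): separated, overlapping, or nested. The separated case is exactly Proposition~\mref{prop:separate}. In the overlapping case, there exist $a,b \in \mapm{X}$ and a subword $c$ of $w$, say $w = q|_c$ for some $q \in \mapmXstar$, such that $a\overline{s_1} = c = \overline{s_2}b$ (or the symmetric equality), with $\bre{c} < \bre{\overline{s_1}} + \bre{\overline{s_2}}$. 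This exhibits the intersection composition $s_2 b - a s_1$ at the local level $c$, which by (\mref{it:Scong1}) satisfies $s_2 b - a s_1 = \sum_i c_i\, p_i|_{t_i}$ with $p_i|_{\overline{t_i}} < c$ and $t_i \in S$. Applying $q$ (substituting into its $\star$) and identifying $q_1$ with the result of substituting $a\star$ for $\star$ in $q$, and $q_2$ with the result of substituting $\star b$ for $\star$ in $q$, I obtain $q_2|_{s_2} - q_1|_{s_1} \equiv 0 \mod (S,w)$, where (\mref{eq:ordering}) propagates $p_i|_{\overline{t_i}} < c$ to $q|_{p_i|_{\overline{t_i}}} < q|_c = w$. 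The nested case is analogous: if $\overline{s_1} = q'|_{\overline{s_2}}$ for some $q' \in \mapmXstar$, then the including composition $s_1 - q'|_{s_2}$ is trivial modulo $(S,\overline{s_1})$, and applying $q_1$ while identifying $q_2$ with the context obtained by substituting $q'$ for $\star$ in $q_1$ gives $q_1|_{s_1} - q_2|_{s_2} \equiv 0 \mod (S,w)$.

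The technical point most in need of care is the bookkeeping for the composition of $\star$-bracketed words: one must verify that the ``substituted'' context really equals the prescribed $q_1$ or $q_2$. This amounts to uniqueness of the context once the position of a subword in $w$ is fixed, which follows by induction on depth from the uniqueness of the standard decomposition in Definition~\mref{def:decomp}. Once that is established, propagating the leading-term bounds from the local levels $c$ or $\overline{s_1}$ up to $w$ is routine via the monomial-order axiom (\mref{eq:ordering}) and Lemma~\mref{lem:lead}.
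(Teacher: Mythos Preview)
Your argument is correct and follows the same three-case structure (separated, overlapping, nested) as the paper. The only notable difference is in the nested case: you assert directly, via the position-tracking you justify in your final paragraph, that $q_2$ coincides with the context obtained from $q_1$ by replacing $\star$ with $q'$; the paper instead defines this composed context $p$ and, rather than claim $p=q_2$, reuses the already-proved separated and overlapping cases (applied with $s_1=s_2$, noting two occurrences of the same word cannot be properly nested) to get $p|_{s_2}\equiv q_2|_{s_2}\bmod (S,w)$, a small detour that sidesteps the uniqueness-of-context argument.
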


\begin{proof}(\mref{it:Scong2}) $\Rightarrow$ (\mref{it:Scong1}):
This is clear since the congruences include those from intersection
composition and inclusion composition.

(\mref{it:Scong1}) $\Rightarrow$ (\mref{it:Scong2}): Let $s_1, s_2
\in S$ and $w \in \mapm{X}$, and suppose there exist $q_1, q_2 \in \mapm{X}^{\star}$ such that $w = q_1|_{\overline{s_1}}=
q_2|_{\overline{s_2}}$. We fix one such occurrence of $\overline{s_1}$ and one such occurrence of $\overline{s_2}$. We distinguish three cases according to the
relative location of these particular occurrences of $\overline{s_1}$ and $\overline{s_2}$ in $w$.

\footnote{We note that there might be multiple occurrences of $\overline{s_1}$ and/or $\overline{s_2}$ in $w$, with different relative locations. If so, then we need to consider each of them separately. For example, take $s_1=ab$, $s_2=bc$ and $w=abcabc$. Then $\overline{s_1}=ab$ and $\overline{s_2}=ba$ both appear twice in $w$, as shown below.

$$w=\underbrace{ab}_{1}\,c\underbrace{ab}_{2}\,c =a\overbrace{bc}^1a\overbrace{bc}^2.$$
Then we need to consider the four (pairs of) occurrences of $\overline{s_1}$ and $\overline{s_2}$ in $w$, two of which are separated and two of which overlap. }

\noindent {\bf Case I.} Suppose the bracketed words
$\overline{s_1}$ and $\overline{s_2}$ are separated in $w$. This
case is Proposition \mref{prop:separate}.
\smallskip

\noindent {\bf Case II.}  Suppose the bracketed words
$\overline{s_1}$ and $\overline{s_2}$ overlap in $w$. Then by switching $s_1$ and $s_2$ if necessary, we might assume that there exist some bracketed subwords $w_1, \mu, \nu, \in \mapm{X}$ of $w$ such that
$w_1=\overline{s_1}\mu=\nu\overline{s_2}$  with $\bre{w_1}<\bre{\,\overline{s_1}\,} +\bre{\,\overline{s_2}\,}$. Thus there is $p\in \mapm{X}^\star$ such that $p|_{w_1}=p|_{\overline{s_1}\mu}=w$ and then $q_1=p|_{\star \mu}$. Let $q:=p|_{\star_1\star_2}\in \mapmXss$ be obtained from $q_1$ by replacing $\star$
by $\star_1$ and $\mu$ by $\star_2$. Then we have
$$
q^{\star_2}|_\mu = q_1, \quad q^{\star_1}|_\nu = q_2, \quad {\rm\
and\ }\quad p|_{\overline{s_1}\mu} =q|_{\overline{s_1},\,\mu} =q_1|_{\overline{s_{_1}}}
=w.
$$
where in the first two equalities, we have identified
$\mapm{X}^{\star_2}$ and $\mapm{X}^{\star_1}$ with
$\mapmXstar$. Thus, we have
$$
q_1|_{ s_{_1}}- q_{_2}|_{ s_{_2}}=(q^{\star_2}|_\mu)|_{s_1} -
(q^{\star_1}|_\nu)|_ {s_2}= p|_{s_1\mu-\nu s_2}.
$$
Since  $S $ is a Gr\"{o}bner-Shirshov basis, we have
$$
s_1\mu-\nu s_2=\sum c_jp_j|_{t_j},
$$
where each $ c_j\in \bfk, \ p_j\in \mapmXstar, \ t_j\in S$ and
$p_j|_{\overline{t_j}}< w_1$.  By linearity,
$$q_1|_{ s_{_1}}- q_{_2}|_{ s_{_2}}= p|_{s_1\mu-\nu s_2} = \sum c_j
p|_{p_j|_{t_j}}.$$
By Lemma~\mref{lem:lead}.\mref{it:lead1}, $\overline{p_j|_{t_j}} =p_j|_{\overline{t_j}}<w_1$.
Thus
$$ \overline{p|_{p_j|_{t_j}}} =p|_{\overline{p_j|_{t_j}}}<p|_{w_1} =p|_{\overline{s_1}\mu} =q_1|_{\overline{s_1}}=w.$$
Therefore
$$
q_1|_{s_1}\equiv q_2|_{s_2} \ \mod (S,w).
$$

\smallskip
\noindent {\bf Case III.} Suppose one of the bracketed words
$\overline{s_1}$,
 $\overline{s_2}$ is a subword of the other.
Without loss of generality, suppose
$\overline{s_1}=q|_{\overline{s_2}}$ for some $\star$-bracketed word
$q$.  Then we have an inclusion composition
$(s_1,s_2)^q_{\overline{s_1}}\,.$

Since $S$ is a Gr\"{o}bner-Shirshov basis, we have
$$(s_1,s_2)^q_{\overline{s_1}}=s_1-q|_{s_2} =\sum_j c_jp_j|_{t_j},$$
with $ c_j\in \bfk, \ p_j\in \mapmXstar, \ t_j\in S$ and
$p_j|_{\overline{t_j}}< \overline{s_1}$. Then
\begin{equation}w =
q_2|_{\overline{s_2}} = q_1|_{\overline{s_1}}=
q_1|_{\overline{q|_{s_2}}} = q_1|_{q|_{\overline{s_2}}} =
p|_{\overline{s_2}},\mlabel{eq:compatible}\end{equation} where $p
\in \mapmXstar$ is obtained from $q_1$ by replacing $\star$ with
$q$.

Now $S$ is a Gr\"obner-Shirshov basis.  Hence we may write, by
Case II that has been proved and in which we take $q_1=p$ and $s_1=s_2$,
$$p|_{s_2} - q_2|_{s_2} = \sum_i d_i r_i|_{v_i}$$
where $d_i \in \bfk$, $r_i \in \mapmXstar$ and $v_i \in S$ and
$\overline{r_i|_{v_i}} < w$. Hence \begin{eqnarray*} q_2|_{s_2} -
q_1|_{s_1} &=& p|_{s_2} - \sum_i
d_i r_i|_{v_i} - q_1|_{s_1}\\
&=&q_1|_{q|_{s_2}}-q_1|_{s_1} - \sum_i d_i r_i|_{v_i}\\
&=& -q_1|_{s_1 - q|_{s_2}} - \sum_i d_i r_i|_{v_i}\\
&=& -\sum_j c_j q_1|_{p_j|_{t_j}} - \sum_i d_i r_i|_{v_i}\\
&=& -\sum_j c_j (q_1|_{p_j})|_{t_j} - \sum_i d_i r_i|_{v_i}.
\end{eqnarray*}

Now $t_j$ is in $S$ and
$$(q_1|_{p_j})|_{\overline{t_j}} =
q_1|_{p_j|_{\overline{t_j}}}<q_1|_{\overline{s_1}} = w.$$  Thus, we
obtain $q_2|_{s_2}-q_1|_{s_1}\equiv 0 \mod (S,w).$
\end{proof}

The following version of Composition-Diamond lemma can also be
proved by the same argument as its nonunitary
analogue~\cite[Theorem~3.2]{BCQ}.

\begin{theorem}{\rm $($Composition-Diamond lemma$)$} {\sl Let $S$ be a set
of monic bracketed polynomials in $\bfk\mapm{X}$, $>$ a monomial
ordering on $\mapm{X}$ and $\Id(S)$ the operated ideal of
$\bfk\mapm{X}$ generated by $S$.  Then the following statements are
equivalent:
\begin{enumerate}
\item $S $ is a Gr\"{o}bner-Shirshov basis in $\bfk\mapm{X}$.
\mlabel{it:cd1}
\item If $f\neq 0$ is in $\Id(S)$, then $\bar{f}=q|_{\overline{s}}$
for some $q \in \mapmXstar$ and $s\in S$. \mlabel{it:cd2}
\item  If $f\neq 0$ is in $\Id(S)$, then
\begin{equation}
f= c_1q_1|_{s_1}+ c_2q_2|_{s_2}+\cdots+ c_nq_n|_{s_n},
\label{eq:fexp1}
\end{equation}
where $ c_i\in \bfk, \ s_i\in S,\ q_i\in \mapmXstar,
q_1|_{\overline{s_1}}>q_2|_{\overline{s_2}}
>\cdots>q_n|_{\overline{s_n}}.$ \mlabel{it:cd3}
\item $\bfk\mapm{Z}=\bfk  \Irr(S)\oplus \Id(S)$ where
$$ \Irr(S): = \mapm{X}\backslash \left\{\,q|_{\overline{s}} \,|\,  q \in \mapmXstar, s\in S\right\}$$
and\, $\Irr(S)$ is a $\bfk$-basis of $\bfk\mapm{X}/\Id(S)$. \mlabel{it:cd4}
\end{enumerate}
\mlabel{thm:CDL} }
\end{theorem}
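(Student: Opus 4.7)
The plan is to establish the four equivalences cyclically as (1) $\Rightarrow$ (2) $\Rightarrow$ (3) $\Rightarrow$ (1), together with (2) $\Leftrightarrow$ (4) handled separately. The well-ordering of $\leq$ on $\mapm{X}$ provides the foundational induction principle, while Lemma \ref{lem:repgen} (representations of elements of $\Id(S)$), Lemma \ref{lem:lead} (behavior of leading monomials under $q|_{\,\cdot\,}$), and Lemma \ref{lem:Scong} (triviality of all compositions for a Gr\"obner-Shirshov basis) serve as the main technical tools.

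The heart of the argument is (1) $\Rightarrow$ (2). Given $0 \neq f \in \Id(S)$, apply Lemma \ref{lem:repgen} to write $f = \sum_i c_i q_i|_{s_i}$, and among all such representations choose one that lexicographically minimizes the pair $(w_{\max}, n)$, where $w_{\max} := \max_i q_i|_{\bar{s_i}}$ and $n$ is the number of indices attaining $w_{\max}$. Since $\bar{f}$ appears among the monomials of $\sum c_i q_i|_{s_i}$, we have $\bar{f} \leq w_{\max}$. If $\bar{f} = w_{\max}$, any index $i$ with $q_i|_{\bar{s_i}} = \bar{f}$ gives the desired expression. Otherwise $\bar{f} < w_{\max}$, which forces cancellation at $w_{\max}$ and hence $n \geq 2$. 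Choose two such indices $i, j$; by hypothesis (1) and Lemma \ref{lem:Scong}, $q_i|_{s_i} \equiv q_j|_{s_j} \mod (S, w_{\max})$, so $q_j|_{s_j} = q_i|_{s_i} + \sum_k d_k p_k|_{t_k}$ with each $p_k|_{\bar{t_k}} < w_{\max}$. Substituting back into $f$ produces a new representation in which either $n$ strictly drops (with $w_{\max}$ preserved) or $w_{\max}$ drops outright; Lemma \ref{lem:lead}(1) certifies that the correction terms remain strictly below $w_{\max}$. This contradicts minimality and yields $\bar{f} = w_{\max} = q_i|_{\bar{s_i}}$.

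The remaining implications are comparatively routine. For (2) $\Rightarrow$ (3), iterate: subtract $c\,q|_s$ where $\bar{f} = q|_{\bar{s}}$ and $c$ is the coefficient of $\bar{f}$; the remainder still lies in $\Id(S)$ with strictly smaller leading monomial, and well-ordering terminates the process with the strictly decreasing expansion in (\ref{eq:fexp1}). For (3) $\Rightarrow$ (1), each composition $(f,g)_w^{\mu,\nu}$ or $(f,g)_w^q$ lies in $\Id(S)$ with leading monomial strictly below $w$ by construction, so applying (3) to it gives exactly the triviality demanded by Definition \ref{de:GS}. For (2) $\Rightarrow$ (4), well-ordered induction on $w$ rewrites each bracketed word modulo $\Id(S)$ as a $\bfk$-combination of elements of $\Irr(S)$ (if $w = q|_{\bar{s}}$, then $w \equiv w - q|_s \mod \Id(S)$, and $w - q|_s$ involves only monomials $< w$ by Lemma \ref{lem:lead}(1)), yielding $\bfk\mapm{X} = \bfk\,\Irr(S) + \Id(S)$; directness follows because any nonzero $f$ in the intersection would, by (2), have $\bar{f} = q|_{\bar{s}} \notin \Irr(S)$. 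Conversely, (4) $\Rightarrow$ (2): if some nonzero $f \in \Id(S)$ had $\bar{f} \in \Irr(S)$, then reducing only the (necessarily smaller) reducible monomials of $f$ modulo $\Id(S)$ produces an element of $\bfk\,\Irr(S) \cap \Id(S) = 0$ whose leading monomial would still be $\bar{f}$, a contradiction.

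The main obstacle is the bookkeeping in (1) $\Rightarrow$ (2): one must verify that the rewriting prescribed by Lemma \ref{lem:Scong} genuinely reduces $(w_{\max}, n)$ in the lexicographic order on the well-ordered set $\mapm{X} \times \mathbb{N}$, and that the correction terms introduced by the congruence truly sit below $w_{\max}$, which is where Lemma \ref{lem:lead}(1) is indispensable. Once this is secured, the proof parallels the nonunitary analogue of \cite{BCQ}, with the unitary adjustments already absorbed into the earlier construction of $\mapmXstar$ and the formulation of the congruence modulo $(S,w)$.
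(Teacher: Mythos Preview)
Your proof is correct and follows essentially the same approach as the paper: the core step (1)$\Rightarrow$(2) via lexicographic minimization of $(w_{\max},n)$ and invocation of Lemma~\ref{lem:Scong} is identical, and the remaining implications are the standard reductions. The only organizational difference is that the paper closes the cycle as (1)$\Rightarrow$(2)$\Rightarrow$(3)$\Rightarrow$(4)$\Rightarrow$(1), proving (4)$\Rightarrow$(1) via an auxiliary lemma (Lemma~\ref{lem:idealw}) that shows any nonzero $h\in\Id(S)$ with $\bar h<w$ admits a representation $\sum d_jq_j|_{s_j}$ with all $q_j|_{\overline{s_j}}<w$; your route (3)$\Rightarrow$(1) bypasses this lemma by observing directly that a composition already lies in $\Id(S)$ with leading monomial strictly below $w$, so (3) immediately yields triviality. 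This is a mild streamlining but not a substantive departure.
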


Before providing its proof, we give the following immediate corollary of the theorem.

\begin{coro}
Let $I$ be an operated ideal of $\bfk\mapm{X}$. If $I$ has a
generating set $S$ that is a Gr\"obner-Shirshov basis, then $
\Irr(S)$ is a $\bfk$-basis of $\bfk\mapm{X}/I$. \mlabel{co:GS}
\end{coro}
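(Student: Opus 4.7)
The plan is to derive this corollary directly from Theorem~\ref{thm:CDL}, using only its equivalence of condition (\ref{it:cd1}) with condition (\ref{it:cd4}). There is essentially nothing to prove beyond unpacking definitions, so the whole argument should fit in a single short paragraph.

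First, I would observe that by hypothesis, $S$ is a generating set of $I$ as an operated ideal of $\bfk\mapm{X}$, so by the definition of the operated ideal generated by a set (or equivalently by Lemma~\ref{lem:repgen}), we have $I = \Id(S)$. Next, by hypothesis $S$ is a Gr\"obner-Shirshov basis, so statement (\ref{it:cd1}) of Theorem~\ref{thm:CDL} holds. Invoking the equivalence with statement (\ref{it:cd4}), I get the direct sum decomposition $\bfk\mapm{X} = \bfk\,\Irr(S) \oplus \Id(S)$, with $\Irr(S)$ a $\bfk$-basis of $\bfk\mapm{X}/\Id(S)$. Substituting $\Id(S) = I$ yields that $\Irr(S)$ is a $\bfk$-basis of $\bfk\mapm{X}/I$, which is the claim.

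Since this is a direct restatement of part of the Composition-Diamond lemma under the extra (free) observation that a generating set of $I$ yields $\Id(S) = I$, there is no real obstacle; the only point to be careful about is not to conflate $I$ with $\Id(S)$ before invoking the theorem, and to note that the set $\Irr(S)$ depends only on $S$, not on $I$, so the statement genuinely asserts that a basis can be read off from any Gr\"obner-Shirshov generating set. No additional constructions, compositions, or congruence manipulations are required.
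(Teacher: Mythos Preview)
Your proposal is correct and matches the paper's approach exactly: the paper presents this as an ``immediate corollary'' of Theorem~\ref{thm:CDL} without writing out a proof, and your one-line derivation (note $I=\Id(S)$, then apply (\ref{it:cd1})$\Rightarrow$(\ref{it:cd4})) is precisely the intended argument.
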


\begin{proof}
 (\mref{it:cd1}) $\Longrightarrow$ (\mref{it:cd2})
Let  $0\neq f\in \Id(S)$. Then by Lemma~\mref{lem:repgen}, $f$ is of the form

\begin{equation}
f=\sum\limits_{i=1}^{k} c_i q_i|_{ s_i},
\quad
0\neq  c_i\in \bfk, q_i\in \mathfrak{M}^\star(X), s_i\in S, 1\leq i\leq k.
\mlabel{eq:fexp}
\end{equation}
Let $w_i=q_i|_{\overline{ s_i}}$. We rearrange them in
non-increasing order by
$$
w_1= w_2=\cdots=w_m >w_{m+1}\geq \cdots\geq w_k.
$$

If for each $0\neq f\in \Id(S)$ there is a choice of the above sum such that $m=1$, then $\bar{f}=q_1|_{\overline{s_1}}$ and we are done. So suppose the implication (\mref{it:cd1}) $\Longrightarrow$ (\mref{it:cd2}) does not hold. Then there is $0\neq f\in \Id(S)$ such that for any expression in Eq.~(\mref{eq:fexp}), we have $m\geq 2$. Fix such an $f$ and choose an expression in Eq.~(\mref{eq:fexp}) such that $w_1=q_1|{\over{s_1}}$ is minimal and such that $m$ is minimal for this choice of $w_1$, that is, with the fewest $q_i|_{s_i}$ such that $q_i|_{\overline{s_i}}=q_1|_{\overline{s_1}}$. Since $m\geq 2$, we have
$q_1|_{\overline{s_1}}=w_1=w_2=q_2|_{\overline{s_2}}$.

Since $S $ is a Gr\"{o}bner-Shirshov basis in $\bfk\mapm{X}$, by
Lemma~\mref{lem:Scong}, we have
$$
q_2|_{ s_2}-q_1|_{ s_1}=\sum_j d_jp_j|_{r_j}
$$
where $d_j\in \bfk, \ r_j\in S, p_j\in \mapmXstar$  and
$p_j|_{\overline{r_j}}<w_1$. Thus
\begin{eqnarray*}
f&=& \sum_{i=1}^k  c_iq_i|_{s_i} \\
&=& ( c_1+ c_2)q_1|_{s_1}+ c_3q_3|_{s_3}+\cdots
+ c_mq_m|_{s_m} + \sum_{i=m+1}^k  c_i q_i|_{s_i}
+ \sum_j c_2d_j p_j|_{r_j}.
\end{eqnarray*}

By the minimality of $m$, we must have $c_1+c_2=c_3=\cdots=c_m=0$. We then obtain an expression of $f$ in the form of Eq.~(\mref{eq:fexp}) for which $q_1|_{\overline{s_1}}$ is even smaller. This is a contradiction.

\smallskip

\noindent (\mref{it:cd2}) $\Longrightarrow$ (\mref{it:cd3}). Suppose
the implication does not hold. Let $F$ be the set of
counterexamples, namely those $0\neq f\in \Id(S)$ that cannot be
written in the form of Eq.~(\mref{eq:fexp1}). Then the set
$\{\bar{f}\,|\,f\in F\}$ of leading terms is not empty. Then there
is an $f$ such that $\bar{f}$ is minimal in this set. By
Item~(\mref{it:cd2}), there are $q\in \mapmXstar$ and $s\in S$ such
that $\bar{f}=q|_{\bar{s}}$. Since $f$ is in $F$ and $q|_s$ is not
in $F$, $f-q|_s$ is not zero. But
$\bar{f}-\overline{q|_s}=\bar{f}-q|_{\bar{s}}=0$ means that
$\bar{f-q|_s}$ is less than $\bar{f}$. By the minimality of $f$ in
$F$, $f-q|_s\neq 0$ is not in $F$ and hence can be written in the
form of Eq.~(\mref{eq:fexp1}). But this means that $f$ can also be
written in the form of Eq.~(\mref{eq:fexp1}). This is a contradiction.

\smallskip

\noindent (\mref{it:cd3}) $\Longrightarrow$ (\mref{it:cd4}).
Obviously $0\in \bfk\, \Irr(S)+\Id(S)\subseteq \bfk\mapm{X}$. Suppose the
inclusion is proper. Then $\bfk\mapm{X}\backslash (\bfk
 \Irr(S)+\Id(S))$ contains only nonzero elements. Let $f\in \bfk\mapm{Z}\backslash (\bfk
 \Irr(S)+\Id(S))$ be such that
\begin{equation}
\bar{f}=\min\left\{\,\bar{g}\,|\, g\in \bfk\mapm{X}\backslash
(\bfk  \Irr(S)+\Id(S))\,\right\}.
\mlabel{eq:F}
\end{equation}
Suppose $\bar{f}$ is in $ \Irr(S)$, then $f\neq \bar{f}$ since $f\not\in
 \Irr(S)$. So $0\neq f-\bar{f}$ is in $\bfk\mapm{Z}\backslash
(\bfk  \Irr(S)+\Id(S))$ with $\overline{f-\bar{f}}<\bar{f}$. This is a contradiction. But suppose $\bar{f}$ is not in $\Irr(S)$. Then $\bar{f}=q|_{\bar{s}}$ for some $q\in \mapm{Z}^\star$ and $s\in S$. Then $\overline{f-q|_s}<\bar{f}$. If $f=q|_s$, then $f$ is in $\Id(S)$, a contradiction. Thus $f\neq q|_s$. Then $0\neq f-q|_s$ with $\overline{f-q|_s}<\overline{f}$. By the minimality of $\bar{f}$ in Eq.~(\mref{eq:F}), we see that $f-q|_s\in \bfk\,\Irr(S)+\Id(S)$ and hence also $f\in \bfk\,\Irr(S)+\Id(S)$, again a contradiction. Therefore, $\bfk\mapm{Z}=\bfk  \Irr(S)+\Id(S)$.

Suppose $\bfk\Irr(S)\cap \Id(S)\neq 0$ and let $0\neq f\in \bfk  \Irr(S)\cap \Id(S)$. Then $f=c_1v_1+\cdots+c_kv_k$ with
$v_1>\cdots >v_k\in  \Irr(S)$. Then by $f\in \Id(S)$ and
Part~(\mref{it:cd3}), $\bar{f}=\nu_1$ is of the form $
q|_{\bar{s}}$ for some $q\in \mapm{Z}^\star$ and $s\in S$. This
is a contradiction to the construction of $ \Irr(S)$.

Therefore, $\bfk\mapm{Z}=\bfk  \Irr(S)\oplus \Id(S)$ and hence $ \Irr(S)$ is a
basis of $\bfk\mapm{Z}/\Id(S)$.

\smallskip

\noindent (\mref{it:cd4}) $\Longrightarrow$ (\mref{it:cd1}).
We first prove a lemma.
\begin{lemma}
Suppose Item~(\mref{it:cd4}) holds. Let $0\neq h\in \Id(S)$ and let $w\in \mapm{Z}$ such that $w>\bar{h}$. Then $h=\sum_j d_j q_j|_{s_j}$ with $q_j|_{\overline{s_j}}<w$.
\mlabel{lem:idealw}
\end{lemma}

\begin{proof}
Denote
$$ \lead(S):=\left\{\,q|_{\bar{s}} \,\big|\, q\in \mapm{Z}^\star, s\in S\,\right\}.$$
Then by Item~(\mref{it:cd4}), we have the disjoint union $\mapm{Z}=\lead(S)\sqcup \Irr(S)$. Then for $0\neq h\in \Id(S)$, we can write
$$h=c_1u_1+\cdots +c_ku_k$$
in which $u_1>\cdots>u_k\in \mapm{Z}$ and there is $1\leq i_0\leq k$ such that $u_{i_0}\in \lead(S)$ and all the previous terms, if there are any, are in $\Irr(S)$.
We call $u_{i_0}$ the first monomial of $h$ in $\lead(S)$.
Suppose the conclusion of the lemma does not hold. Then we can choose our counter example $h$ such that the first monomial $u_{i_0}$ of $h$ is minimal with respect to the order $<$\,. Then we have $u_{i_0}=q|_{\bar{s}}$ for some $s\in S$. Consider
$$h':=h-q|_{s}=c_1u_1+\cdots + c_{i_0-1}u_{i_0-1} + c_{i_0}q_{\bar{s}-s} +c_{i_0+1}u_{i_0+1} +\cdots  +c_ku_k.$$
Then we still have $\overline{h'}<w$.
Since $h$ is a counter example, $h'\neq 0$. Since $q|_{s}$ is in $\Id(S)$, $h'$ is still in $\Id(S)$. Since $$\overline{q|_{\bar{s}-s}} =q|_{\overline{\bar{s}-s}}<q|_{\bar{s}} = u_{i_0},$$
the first monomial of $h'$ in $\lead(S)$ is smaller than $u_{i_0}$. By the minimality of $h$, we have $h'=\sum_j d_j q_j|_{s_j}$ with $q_j|{\overline{s_j}}<w$. Then $h=h'+q|_{s}$ also has this property. This is a contradiction.
\end{proof}

Now suppose
$f,g\in S$ give a composition. Let $F=f\mu$ and $G=\nu g$ in the
case of intersection composition and let $F=f$ and $G=q|_g$ in the
case of including composition. Then we have $w:=\bar{F}=\bar{G}$.
If $(f,g)_w=F-G=0$, then there is nothing to prove.
If $(f,g)_w\neq 0$, then by Lemma~\mref{lem:idealw}, there are $q_j\in \mapmXstar$ and $s_j\in S$ such
that $(f,g)_w=\sum_j d_j q_j|_{s_j}.$ with $q_j|_{\overline{s_j}}<w$. Hence $(f,g)_w$ is trivial modulo $(S,w)$.
\end{proof}

\section{Differential type operators}
\mlabel{sec:diff}
As remarked in the Introduction, we restrict our attention to those
OPIs that are computationally feasible, in particular, to two
families that are broad enough to include all the operators in
Rota's list, except the Reynolds operator. These families are
identified by how they behave with respect to multiplication for
which associativity is assumed. As differentiation is easier than
integration, we progress more on differential type OPIs than on
Rota-Baxter type ones.

\subsection{Concepts and conjecture}
Our model for differential type operators is the free differential
algebra and its weighted generalization as considered
in~\mcite{G-K3}. We refer the reader there for further details on
construction of free (noncommutative) differential algebras of
weight $\lambda$.

\subsubsection{The concepts}
The known OPIs that define  an endomorphism operator, a differential
operator, or a differential operator of weight $\lambda$ share a
common pattern, based on which we will define OPIs of differential
type. For this family of operators, we shall use the prefix notation
$\delta(r)$ (or $\delta r$) for the image of $r$ in such an algebra,
which is more traditional, but we shall continue to use the infix
notation $\lc r \rc$ in $\bfk\mapm{X}$ to emphasize the string nature of bracketed expressions.

\begin{mdefn} We say an expression
$E(X) \in \bfk\mapm{X}$ is {\bf in differentially reduced form}
\paren{DRF} if it does not contain any subexpression of the form $\lc uv
\rc$ for any non-units $u,v \in \bfk\mapm{X}$. Let $\Sigma$ be a
rewriting system {\em \cite{BN}} in $\bfk\mapm{X}$. We say $E(X)$ is
{\bf $\Sigma$-reducible} if $E(X)$ can be reduced to zero under
$\Sigma$. \mlabel{def:DRF}
\end{mdefn}

Let a set $X$ be given. Define $x^{(n)}\in \mapm{X}, n\geq 0,$
recursively by
$$ x^{(0)}=x, x^{(k+1)}=\lc x^{(k)}\rc, k\geq 0.$$
Then
\begin{equation}
\Delta(X):=\{x^{(n)}\,|x\in X, n\geq 0\},
\mlabel{eq:Delta}
\end{equation}
generates a monoid $M(\Delta(X))$ in
$\mapm{X}$ and hence $\bfk\langle\Delta(X)\rangle:=\bfk
M(\Delta(X))$ (the noncommutative differential polynomial ring) is a
subalgebra of $\bfk\mapm{X}$. Then $E(X)\in \bfk\mapm{X}$ is in DRF
if and only if it is in $\bfk\langle \Delta(X)\rangle$.

\begin{mdefn}  Let $\phi(x,y) := \lc xy\rc - N(x,y) \in \bfk\mapm{x,y}$.
\begin{enumerate}
\item
Define an associated rewriting system
\begin{equation}\Sigma_\phi := \left \{\lc a b\rc \mapsto N(a,b) \mid
a, b \in \mapmonoid(Z)\backslash\{1\} \right\},
\mlabel{eq:Sphi}
\end{equation}
where $Z$ is a set.
More precisely, for $g, g'\in \bfk\mapm{Z}$, denote
$g\to_{\Sigma_\phi}g'$ if there are $q\in \mapmonoid^\star(Z)$ and $a,b\in \mapmonoid(Z)$ such that
\begin{enumerate}
\item $q|_{\lc ab\rc}$ is a monomial of $g$ with coefficient $c\neq 0$,
\item $g'=g-cq|_{(\lc ab\rc-N(a,b))}$.
\end{enumerate}
In other words, $g'$ is obtained from $g$ by replacing a subword $\lc ab\rc$ in a monomial of $g$ by $N(a,b)$.
\item
An expression $E(a,b) \in \bfk\mapm{Z}$ is {\bf differentially
$\phi$-reducible} if it is
$\Sigma_\phi$-reducible.
\end{enumerate}
\mlabel{def:phireduced}
\end{mdefn}
The non-unit requirement in Eq.~(\mref{eq:Sphi}) is
to avoid infinite rewriting of the form such as $\lc u \rc =\lc u
\cdot \bfone \rc \mapsto N(u, \bfone)$, when $N(u,\bfone)$ may
involve $\lc u \rc$.
See Section~\mref{ss:diffGS} for this rewriting system in terms of reduction relations.

\begin{mdefn}
We say an OPI $\phi\in \bfk\mapm{x,y}$, or the expression $\phi = 0$, is {\bf of
differential type} \paren{OPIDT} if $\phi$ has the form $\lc xy\rc -
N(x,y)$, where $N(x,y)$ satisfies the three conditions$:$
\begin{enumerate}
\item
$N(x,y)$ is {\bf totally linear} in $x$ and $y$, in the sense that the total degree of $\lc x\rc^n, n\geq 0$ (resp. $\lc y\rc^n, n\geq 0$) in each monomial of $N(x,y)$ is one;
\mlabel{it:diff0}
\item
$N(x,y)$ is in DRF; \mlabel{it:diff1}
\item
For any set $Z$ and $u, v, w\in \mapmonoid(Z)\backslash\{1\}$, $ N(uv,w) - N(u,vw)$ is differentially $\phi$-reducible. \mlabel{it:diff2}
\end{enumerate}
If $\phi := \lc xy\rc - N(x,y)$ is an OPIDT, we also say the
expression $N(x,y)$ and the defining operator $P$ of a
$\phi$-algebra $R$ are {\bf of differential type}.
\mlabel{de:difftype1}
\end{mdefn}

\begin{mremark}{\em Condition \mref{it:diff0} is imposed since we are only interested in linear operators.
Condition \mref{it:diff1} is needed to avoid infinite
rewriting under $\Sigma_\phi$. Condition~\mref{it:diff2} is needed so that
$\lc(uv)w\rc=\lc u(vw)\rc$. Note that Condition \mref{it:diff2} is {\it
not} equivalent to
$$\phi_{\bfk\mapm{Z}}(uv,w) - \phi_{\bfk\mapm{Z}}(u,vw) \in
I_\phi(\{Z\})\ \forall\ u,v,w \in \bfk\mapm{Z},$$ which is always
true. Here $I_\phi(\{Z\})$ is the operated ideal of $\bfk\mapm{Z}$
generated by the set $$\left\{\phi_{\bfk\mapm{Z}}(a,b)\,|\, a, b\in
\bfk\mapm{Z}\right\}.$$ }\end{mremark}

\begin{exam}{\em For any $\lambda \in \bfk$, the
expressions $\lambda xy$, $\lambda \lc x\rc \lc y \rc$ (operators
that are semi-endomorphisms), and $\lambda \lc y \rc \lc x \rc$
(operators that are semi-anti\-morphisms) are of differential type. A
differential operator of weight $\lambda$ satisfies an OPI of
differential type (Eq.\,(\mref{eq:dlambda})). This can be easily
verified.} \mlabel{ex:opidt}\end{exam}

\subsubsection{The OPIDT conjecture}

We can now state the classification problem of differential type
OPIs and operators
\begin{problem}
{\bf (Rota's Problem: the Differential Case) } Find all operated
polynomial identities of differential type by finding all
expressions $N(x,y)\in \bfk\mapm{x,y}$ of differential type.
\end{problem}

We propose the following answer to this problem.

\begin{conjecture} {\bf (OPIs of Differential Type)}
Let $\bfk$ be a field of characteristic zero. Every expression
$N(x,y) \in \bfk\mapm{x,y}$ of differential type takes one \paren{or
more} of the forms below for some $a,b,c,e \in \bfk:$
\begin{enumerate}
\item $b(x\lc y\rc +\lc x\rc y)+c\lc x\rc
\lc y\rc +exy$ where $b^2 = b + ce$,\mlabel{dtoi:one}
\item $ce^2 yx + e xy + c \lc y\rc \lc x\rc  -ce(y\lc x\rc +
\lc y\rc x)$,\mlabel{dtoi:two}
\item
$\sum\limits_{i,j\geq 0} a_{ij}\, \lc 1\rc^i xy \lc 1\rc^j$ with the convention that $\lc 1\rc^0=1$. \item $x\lc y\rc +\lc x\rc y + a x\lc 1\rc
 y + b x y$,\mlabel{dtoi:four}
\item $\lc x\rc y+a(x\lc 1\rc y-xy\lc 1\rc )$,\mlabel{dtoi:five}

\item $x\lc y\rc +a(x\lc 1\rc y-\lc 1\rc xy)$.\mlabel{dtoi:six}
\end{enumerate} 
 \mlabel{con:diffclass}
\end{conjecture}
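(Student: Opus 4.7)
The plan is to parametrize all candidate $N(x,y)$ satisfying conditions~(\mref{it:diff0}) and (\mref{it:diff1}) of Definition~\mref{de:difftype1}, impose the associativity condition~(\mref{it:diff2}), and solve the resulting equations on the coefficients. Since $N$ is totally linear in $x,y$ and in DRF, every monomial of $N$ lies in $\bfk\langle\Delta(\{x,y\})\rangle$ and has the shape of a noncommutative product in some order of exactly one factor $x^{(a)}$, exactly one factor $y^{(b)}$, and possibly constant factors built from $1$ via nested brackets such as $\lc 1\rc, \lc\lc 1\rc\rc, \ldots$ Thus $N = \sum c_\alpha m_\alpha$ indexed by a countable set of monomial types, with only finitely many $c_\alpha \in \bfk$ nonzero. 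I would then form $E := N(uv,w) - N(u,vw)$ in $\bfk\mapm{u,v,w}$: each $x^{(a)}$ with $a\geq 1$ occurring in $N(uv,w)$ becomes $\lc\cdots\lc uv\rc\cdots\rc$, and must be rewritten via $\Sigma_\phi$ by the rule $\lc ab\rc \mapsto N(a,b)$ until no subword of that form with non-unit $a,b$ remains. Termination is ensured because the DRF condition on $N$ forbids re-introduction of such a subword at the same level. Condition~(\mref{it:diff2}) then requires the fully reduced $E$ to vanish identically in $\bfk\mapm{u,v,w}$, giving a system of polynomial equations in $\{c_\alpha\}$ by matching coefficients of each bracketed word.

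The principal difficulty is that the index set of monomial types is infinite: a priori, both the bracket depth $\max(a,b)$ of the $x$- and $y$-factors and the shape of the constant factors are unbounded. The central step of the proof is therefore to show that the associativity test forces a bound. I would fix a monomial order on $\mapm{u,v,w}$ refining depth, then argue by induction on depth: if some $m_\alpha$ of $N$ has $x^{(a)}$ with $a\geq 2$ and $c_\alpha \neq 0$, then after substitution and reduction, $E$ produces a leading bracketed word that cannot appear in the reduction of $N(u,vw)$, because $N(u,vw)$ generates ``deep'' brackets only around $vw$, not around $uv$, and the structural trace of reducing $\lc\lc uv\rc\rc \to \lc N(u,v)\rc \to \cdots$ leaves a signature of mixed $u,v$ occurrences at depth $\geq 2$ that is not produced on the $N(u,vw)$ side. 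A symmetric argument handles $y^{(b)}$ with $b\geq 2$, and a parallel leading-term argument restricts constant factors to monomials in $\lc 1\rc$ of bounded degree, placed either between or flanking the $x$- and $y$-factors. With depths bounded by $1$, the parameter space collapses to a finite list of monomial types.

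On this finite parameter space the coefficient equations decompose by breadth and by whether the $xy$- or $yx$-orientation dominates, and are tractable either by direct case analysis or by Gr\"obner-basis computation. The expected output factors cleanly into the six families stated, each a component of the affine variety cut out by the equations --- for instance, the constraint $b^2 = b + ce$ in family~(\mref{dtoi:one}) should emerge from matching the coefficient of $\lc u\rc\lc v\rc w$ against the reduction of $\lc uv\rc w$ into $N(u,v)w$ inside $E$. The main obstacle is precisely the depth-bounding step: deeply nested reductions can cascade in subtle ways and a naive leading-term analysis may not survive several rounds of rewriting without unforeseen cancellations among terms of different original depths. A more conceptual route --- the one the paper appears to pursue --- is to recast condition~(\mref{it:diff2}) as a Gr\"obner-Shirshov condition on the operated ideal generated by $\phi$, and to apply the Composition-Diamond Lemma (Theorem~\mref{thm:CDL}) in tandem with Theorem~\mref{thm:gsdiff} of Section~\mref{ss:diffGS} to extract the classification directly, thereby sidestepping the need for an ad hoc termination argument and replacing it with the structural criterion that each composition of $\phi$ with the associativity relation be trivial modulo $\phi$.
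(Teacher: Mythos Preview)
The statement you are trying to prove is a \emph{conjecture}, and the paper does not prove it. The paper's ``evidence'' section establishes only two things: Theorem~\mref{thm:exam} verifies the forward direction (each listed $N(x,y)$ is of differential type), and Theorem~\mref{thm:lowdeg} confirms by a \emph{Mathematica} computation that the list is complete among $N(x,y)$ of operator degree at most $2$. The full classification --- in particular, the assertion that no $N(x,y)$ of higher operator degree can be of differential type --- is left open.

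Your proposal correctly identifies the depth-bounding step as the crux, and you are right to be uneasy about it: the leading-term argument you sketch is not a proof, and the paper does not supply one either. Your final paragraph misreads the role of Theorem~\mref{thm:gsdiff} and the Composition-Diamond Lemma: those results give equivalent \emph{characterizations} of the differential-type condition (as convergence of $\Sigma_\phi$, as a Gr\"obner-Shirshov basis, as a free-algebra description), but they do not classify which $N(x,y)$ satisfy it. In particular, nothing in Section~\mref{ss:diffGS} bounds the depth of $N$ or ``extracts the classification directly.'' So the gap you flagged is real, it is precisely the content of the conjecture, and the paper leaves it as such.
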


Note that the list is not symmetric in $x$ and $y$. One might think
that if $N(x,y)$ is of differential type, then so is $N(y,x)$. But
this is not true.

\begin{exam} $N_1(x,y):=x\lc y\rc$ is of differential type since
\begin{eqnarray*}N_1(uv,w) - N_1(u,vw) &=& uv \lc w\rc - u \lc vw\rc
\\&\mapsto& uv
\lc w\rc - uv \lc w\rc = 0\end{eqnarray*}
for all $u,v,w \in
\mapmonoid(Z)$. However, $N_2(x,y):=y\lc x\rc$ is not, since
\begin{eqnarray*}N_2(xy,x) - N_2(u,vu) &=& u \lc uv\rc - vu\lc
u\rc\\&\mapsto& u v \lc u \rc - vu\lc u\rc = (uv-vu)\lc u
\rc,\end{eqnarray*} which is in DRF \paren{no further reduction
using $\Sigma_\phi$ is possible, where $\phi:= \lc xy\rc -
N_2(x,y)$} but non-zero. See also {\em Example \mref{ex:freak}}.
\mlabel{ex:dt}\end{exam}

\subsection{Evidence for the conjecture}
\mlabel{ss:evid} We provide evidence, both computational and
theoretical, for Conjecture~\mref{con:diffclass}. Further results will be given in Section~\ref{ss:diffGS}.

\subsubsection{Verification of the operators}
\mlabel{sss:veri}
\begin{theorem}{\sl
The OPI $\phi := \lc xy\rc - N(x,y)$, where $N(x,y)$ is any
expression listed in Conjecture~\mref{con:diffclass} is of
differential type. \mlabel{thm:exam}}
\end{theorem}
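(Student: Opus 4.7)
The plan is to verify, for each of the six forms of $N(x,y)$ listed in Conjecture~\mref{con:diffclass}, the three defining conditions of Definition~\mref{de:difftype1}. Conditions (1) (total linearity in $x$ and $y$) and (2) (DRF) are immediate by inspection of each listed expression: none contains any $\lc uv\rc$ with $u,v$ nonunit, and each monomial has total degree one in the $\lc x\rc^n$ and $\lc y\rc^n$ variables. The substantive task is condition (3): for arbitrary nonunit $u,v,w\in \mapmonoid(Z)$, the bracketed polynomial $D := N(uv,w) - N(u,vw)$ must reduce to zero under the rewriting system $\Sigma_\phi$, which replaces each occurrence of $\lc ab\rc$ (with $a,b$ nonunits) by $N(a,b)$.

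For each form I would proceed mechanically: substitute to form $D$; apply $\Sigma_\phi$ to every occurrence of $\lc uv\rc$ and $\lc vw\rc$ in $D$, including those nested inside products such as $\lc w\rc\lc uv\rc$; expand; and collect coefficients by bracketed-word monomial. Form (3), $N(x,y)=\sum a_{ij}\lc 1\rc^i xy\lc 1\rc^j$, gives $D=0$ with no rewriting needed, since $(uv)w = u(vw)$ already inside $\mapmonoid(Z)$. For forms (4), (5), and (6), a single rewriting step on each of the two bracketed factors produces an expression in which all resulting monomials pair up with opposite signs and cancel identically in the free parameters $a,b$, with no additional hypotheses required.

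The interesting cases are (1) and (2). For form (1), the rewriting produces roughly a dozen distinct monomial types; the cross-terms $u\lc v\rc w$, $u\lc v\rc\lc w\rc$, $uvw$, $\lc u\rc\lc v\rc\lc w\rc$, $\lc u\rc v\lc w\rc$, $\lc u\rc\lc v\rc w$ cancel pairwise, but the coefficients of the two monomials $uv\lc w\rc$ and $\lc u\rc vw$ come out to $b - b^2 + ce$ and $-(b - b^2 + ce)$ respectively. These are forced to vanish by the hypothesis $b^2 = b + ce$, thereby revealing the precise algebraic origin of that constraint. Form (2) will be the lengthiest verification and I expect it to be the main obstacle, because substituting $uv$ and $vw$ into the $yx$-factor rearranges variables into $wuv$ and $vwu$, producing many noncommuting monomials whose bookkeeping must be handled carefully. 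Organizing the eight summands of $D$ after rewriting $\lc uv\rc \mapsto N(u,v)$ and $\lc vw\rc\mapsto N(v,w)$ (in particular inside $\lc w\rc\lc uv\rc$, $\lc vw\rc\lc u\rc$, $w\lc uv\rc$, and $\lc vw\rc u$) and then collecting by monomial type yields coefficient zero for every monomial, identically in $c$ and $e$, with no further constraint on the parameters. Thus in every case the verification reduces to matching coefficients, with the only nontrivial content being the identity $b^2 = b + ce$ in form (1).
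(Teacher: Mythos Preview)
Your proposal is correct, and for forms (3)--(6) it coincides with the paper's treatment (the paper calls these routine and works out only form (5) explicitly). For forms (1) and (2), however, the paper takes a different and more conceptual route than your brute-force expansion.

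For form (1), rather than expanding and collecting a dozen monomial types, the paper observes the factorization
\[
cN(x,y) + bxy = (c\lc x\rc + bx)(c\lc y\rc + by),
\]
which holds precisely when $b^2 = b + ce$. Setting $\alpha(u) := c\lc u\rc + bu$, the rewrite rule $\lc uv\rc \mapsto N(u,v)$ becomes $\alpha(uv) \mapsto \alpha(u)\alpha(v)$, and then $c(N(uv,w) - N(u,vw))$ rewrites to $(\alpha(u)\alpha(v))\alpha(w) - \alpha(u)(\alpha(v)\alpha(w)) = 0$ by associativity. This handles $c \neq 0$ in one stroke; the paper then disposes of $c = 0$ (forcing $b \in \{0,1\}$) by a short direct check. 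For form (2), the paper uses the analogous factorization $N(x,y) - exy = c(\lc y\rc - ey)(\lc x\rc - ex)$ with $\alpha(u) := \lc u\rc - eu$, so that $\alpha(uv) \mapsto c\,\alpha(v)\alpha(u)$ and again associativity finishes it.

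Your direct approach has the advantage of being uniform across all six forms, avoids the case split on $c$ in form (1), and makes the constraint $b^2 = b + ce$ appear transparently as the coefficient of $uv\lc w\rc$ (and with opposite sign, of $\lc u\rc vw$). The paper's approach is shorter and more illuminating: it explains the constraint as exactly the condition making $\alpha$ behave multiplicatively under the rewrite rule, reducing the verification to associativity itself. For form (2) especially, the $\alpha$-trick spares you the lengthy bookkeeping of the noncommuting monomials $wuv$, $vwu$, etc., that you anticipated.
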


\begin{proof} Clearly, all six expressions are in DRF. We
check $\phi$-reducibility for the first two cases.

\smallskip
\noindent {\bf Case 1}.  Here $N(x,y):=b(x\lc y\rc +\lc x\rc y)+c\lc
x\rc \lc y\rc +exy$, where $b^2 = b + ce$. We have
\begin{equation}
cN(x,y)+bxy=(c\lc x\rc + bx)(c\lc y\rc +by). \label{eq:phi}
\end{equation}
Let $\alpha$ be the operator defined by $\alpha(u):=c\lc u\rc +bu$
for $u \in \bfk\mapm{x,y}$. Then for any non-units $u,v \in
\bfk\mapm{x,y}$, the rewriting rule $\lc uv\rc\mapsto N(u,v)$ gives
the rewriting rules
$$ \alpha(uv)=c\lc uv \rc+buv \mapsto cN(u,v)+buv=\alpha(u)\alpha(v)$$
by Eq.~(\mref{eq:phi}). Again, by Eq.~(\mref{eq:phi}), for a
non-unit $w$, we have
\begin{eqnarray*}cN(uv,w)+b(uv)w &=&
\alpha(uv)\alpha(w)\mapsto (\alpha(u)\alpha(v))\alpha(w),\\
cN(u,vw)+bu(vw)&=&\alpha(u)\alpha(vw) \mapsto
\alpha(u)(\alpha(v)\alpha(w)).\end{eqnarray*} Then $c(N(uv,w) -
N(u,vw))$ is differentially $\phi$-reducible by associativity.  If
$c \ne0$, then $N(x,y)$ is of differential type. Suppose $c = 0$.
The constraint $b^2=b+ce$ becomes $b^2=b$ and either $b=0$ or $b=1$.
When $b=0$, $\phi = \lc xy \rc - e xy$ (semi-endomorphism case), and
when $b=1$, $\phi = \lc xy\rc - (x\lc y\rc + \lc x\rc y +exy)$.
These are easily verified directly to be OPIs of differential type.

\smallskip
\noindent {\bf Case 2}. Here $N(x,y):=ce^2 yx + e xy + c \lc y\rc
\lc x\rc -ce(y\lc x\rc +\lc y\rc x)$ and we have $N(x,y)-exy=c(\lc
y\rc -ey)(\lc x\rc -ex)$.  Let $\alpha(u)=\lc u\rc -eu$ and the rest
of the proof is similar to Case 1.

\smallskip For the remaining cases, it is routine to check that
$N(uv,w)-N(u,vw)$ is differentially $\phi$-reducible for $\phi :=
\lc xy\rc - N(x,y)$. For example, for Case 5, we have, using
associativity,
\begin{eqnarray*}N(uv,w)&=&
\lc uv\rc w +a(uv\lc 1\rc w - uvw\lc 1\rc)\\
&\mapsto& \bigl(\lc u\rc v +a (u\lc \bfone\rc v - uv \lc \bfone
\rc)\bigr)w +a(uv\lc 1\rc w - uvw\lc 1\rc)\\
&=& \lc u\rc
vw+a(u\lc 1\rc vw-uvw \lc 1\rc)\\
&=& N(u, vw).  \end{eqnarray*}
\end{proof}

\subsubsection{Computational evidence}
\mlabel{sss:com} \begin{mdefn}The {\bf operator degree} of a
monomial in $\bfk\mapm{\opset}$ is the total number that the
operator $\lc\ \rc$ appears in the monomial. The {\bf operator
degree} of a polynomial $\phi$ in $\bfk\mapm{\opset}$ is  the
maximum of the operator degrees of the monomials appearing in
$\phi$.\end{mdefn}

\begin{theorem}{\sl
Let $\bfk$ be a field. The only expressions $N(x,y)$ of differential
type for which the total operator degrees $\leq 2$ are the ones
listed in Conjecture~\mref{con:diffclass}. More precisely, the only
expressions of differential type in the form
\begin{eqnarray*}
\lefteqn{N(x,y):=a_{0,0}xy+a_{0,1}x\lc y \rc + a_{0,2}x \lc \lc y \rc \rc +
a_{1,0}\lc x \rc y + a_{1,1}\lc x \rc \lc y \rc}\\&& +a_{1,2} \lc x
\rc \lc \lc y \rc \rc + a_{2,0} \lc \lc x \rc \rc y + a_{2,1} \lc
\lc x \rc \rc \lc y \rc + a_{2,2} \lc \lc x \rc \rc \lc \lc y \rc
\rc
\\&&b_{0,0}yx+b_{0,1} y \lc x \rc + b_{0,2} y \lc \lc x \rc \rc + b_{1,0} \lc
y \rc x + b_{1,1}\lc y \rc \lc x \rc\\&& + b_{1,2} \lc y \rc \lc \lc
x \rc \rc + b_{2,0} \lc\lc y\rc\rc x + b_{2,1}\lc \lc y \rc \rc \lc x \rc + b_{2,2} \lc \lc y
\rc \rc \lc \lc x \rc \rc
\end{eqnarray*} where $a_{i,j}, b_{i,j} \in \bfk$ $(0 \leq i, j\leq
2)$, are the ones listed. \mlabel{thm:lowdeg}}
\end{theorem}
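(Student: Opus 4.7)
The plan is to treat this as a finite-dimensional problem parameterized by the eighteen unknown coefficients $a_{i,j}, b_{i,j}$, $0 \le i,j \le 2$. Fix a set $Z$ and three non-unit indeterminates $u, v, w \in \mapmonoid(Z)$. By substitution I form $N(uv,w)$ and $N(u,vw)$ in $\bfk\mapm{Z}$; each is a $\bfk$-linear combination of bracketed words in $\{u,v,w\}$ of operator degree at most two, but some of their monomials, namely those of the shape $\lc uv\rc$, $\lc vw\rc$, $\lc \lc uv\rc \rc$, $\lc \lc vw\rc \rc$, are not in DRF and so are candidates for rewriting under $\Sigma_\phi$.

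Next I apply the rule $\lc ab\rc \mapsto N(a,b)$ of Definition~\ref{def:phireduced} exhaustively to $N(uv,w) - N(u,vw)$. Because $N$ is totally linear in its two arguments, each rewrite replaces a single bracketed product by a $\bfk$-linear combination of terms of operator degree bounded by two, and one verifies termination at a normal form $R(u,v,w) \in \bfk\mapm{Z}$ whose monomials are DRF bracketed words in $u,v,w$, each linear in $u$, in $v$, and in $w$. The DRF bracketed words that arise are part of the basis of $\bfk\mapm{Z}$ from Section~\ref{sec:form}, hence are $\bfk$-linearly independent. The differential-type condition~(\ref{it:diff2}) is therefore equivalent to the vanishing of every coefficient of $R(u,v,w)$. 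This yields a finite polynomial system in the eighteen unknowns, whose zero set I would compute via primary decomposition or a Gr\"obner basis of the coefficient ideal.

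The last step is to match the irreducible components of this zero set against the six families in Conjecture~\ref{con:diffclass}: the sufficiency direction is Theorem~\ref{thm:exam}, so what remains is to verify that every component is contained in one of the six listed families. The main obstacle is the combinatorial size of the normal form $R(u,v,w)$ and the branching of its coefficient ideal: there are several components, corresponding for example to the quadric $b^2 = b + ce$ in family~(\ref{dtoi:one}) and to various degenerations (such as $c=0$) of case~(\ref{dtoi:two}). Handling this cleanly requires the computer-algebra implementation described in Section~\ref{sec:comp}, together with care in the case split to ensure that no parameter specialization is overlooked and that no two parameterizations are silently identified.
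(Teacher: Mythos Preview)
Your proposal is correct and matches the paper's approach exactly: the paper's proof consists entirely of the sentence ``This is obtained and verified by computations in {\it Mathematica},'' and Section~\ref{sec:comp} describes precisely the method you outline---form the ansatz, reduce $N(uv,w)-N(u,vw)$ under $\Sigma_\phi$, equate the DRF normal form to zero to obtain a polynomial system in the indeterminate coefficients, and solve via Gr\"obner bases. One minor caveat: your remark that each rewrite produces terms ``of operator degree bounded by two'' is not literally true (iterated rewriting of $\lc\lc uv\rc\rc$ can raise the depth), but termination holds regardless by Proposition~\ref{pp:term}, so this does not affect the argument.
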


\begin{proof} This is obtained and verified by computations in {\it
Mathematica} \mcite{Wolfram}. See Section \mref{sec:comp} for a
brief description and \mcite{Sit} for details and
results.\end{proof}

\section{Relationship of differential type operators with convergent rewriting and Gr\"obner-Shirshov bases}
\label{ss:diffGS} We now characterize OPIDT in terms of convergent rewriting systems and
Gr\"obner-Shirshov bases as we have discussed in Section~\mref{sec:GS}.
We quote the following basic result of well order for reference.
\begin{lemma}{\sl
\begin{enumerate}
\item Let $A$ and $B$ be two sets with well-orderings. Then we obtain an extended well order on the disjoint union $A\sqcup B$ by defining $a<b$ for all $a\in A$ and $b\in B$.
    \label{it:union}
\item Let $A$ be a set with a well order. Then the lexicographic order on $M(A)$ is a well order.
    \label{it:monoid}
\end{enumerate}
\label{lem:ord}}
\end{lemma}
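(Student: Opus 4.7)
For part (1), the plan is straightforward: verify totality and the existence of a least element in any nonempty subset of $A\sqcup B$. Totality is immediate from the given well-orderings on $A$ and $B$ together with the declaration $a<b$ for $a\in A$, $b\in B$. For the well-ordering property, let $S\subseteq A\sqcup B$ be nonempty. If $S\cap A \neq \emptyset$, then the minimum of $S\cap A$ (which exists by the well-ordering of $A$) is the minimum of $S$, since every element of $S\cap B$ is declared larger. Otherwise $S\subseteq B$, and its minimum in $B$ serves as the minimum in $A\sqcup B$. This portion of the argument is purely bookkeeping.

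For part (2), I would interpret ``lexicographic order'' on $M(A)$ as the degree-lexicographic order: $u<v$ if either $|u|<|v|$, or $|u|=|v|$ and $u$ is strictly smaller than $v$ under the letter-by-letter lex comparison induced by the well-order on $A$. My plan has two stages. First, prove by induction on $n$ that the lex order on $A^n$ (the set of words of length exactly $n$) is a well-order: the base case $n=0$ is trivial since $A^0$ is a singleton, and in the inductive step, identifying $A^{n+1}$ with $A\times A^n$, I pick from any nonempty subset the smallest first letter appearing (using well-ordering of $A$), then the smallest length-$n$ suffix among the words with that first letter (using the inductive hypothesis). Second, for a general nonempty $S\subseteq M(A)$, the length set $\{\,|u|\mid u\in S\,\}\subseteq \mathbb{N}$ has a least element $n_0$, and $\min S$ is the lex-minimum of $\{u\in S : |u|=n_0\}$, which exists by the previous stage.

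The main subtlety I anticipate, and really the only one, is clarifying the convention for the phrase ``lexicographic order,'' since pure left-to-right lex on $M(A)$ is \emph{not} a well-order in general: for example, over $A=\{a,b\}$ with $a<b$, one obtains the infinite descending chain $b > ab > aab > aaab > \cdots$ under the convention that a proper extension is smaller, and the dual convention produces the descending chain $a > aa > aaa > \cdots$. The graded (degree-first) interpretation above is the standard one in the Gr\"obner--Shirshov literature and is precisely what is needed to build monomial orders on $\mapm{X}$ in the sequel; my plan therefore includes making this convention explicit at the outset of the proof of (2) so that the two-stage argument applies cleanly.
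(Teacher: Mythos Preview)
The paper does not actually prove this lemma --- it is introduced with ``We quote the following basic result of well order for reference'' and no argument is given --- so there is nothing in the paper to compare your proof against.

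Your argument for part~(1) is correct and entirely standard. For part~(2) you have put your finger on a real issue: under the pure left-to-right lexicographic reading the statement is false, and your counterexamples ($b>ab>aab>\cdots$ under one prefix convention, $a>aa>aaa>\cdots$ under the other) are correct. Your degree-first (length-then-lex) reading makes the statement true, and your two-stage proof --- induction on $n$ showing that lex on $A^n$ is a well-order, then picking the least length in a nonempty $S\subseteq M(A)$ --- is the standard argument and is correct. This graded reading is exactly what the paper needs for the recursive construction of the monomial order on $\mapm{X}$ that follows, so making the convention explicit at the outset, as you propose, is the right move.
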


Let $>$ be a well-ordering on a set $Z$. We extend $>$ to a well-ordering on $\mapmonoid(Z)=\dirlim \mapmonoid_n(Z)$ by recursively defining a well-ordering $>_n$, on $\mapmonoid_n:=\mapmonoid_n(Z)$ for each $n\geq 0$. Denote by $\deg_{_{Z}}(u)$ the number of $x\in Z$ in $u$ with repetition.
When $n=0$, we have $\mapmonoid_0=M(Z)$. In this case, we obtain a well-ordering by taking the lexicographic order $>_{\text{lex}}$ on $M(Z)$ induced by $>$ with the convention that $u>_{\text{lex}} 1$ for all $u\in M(Z)\backslash\{1\}$.
Suppose $>_n$ has been defined on $\mapmonoid_n:=M(Z\sqcup \lc \mapmonoid_{n-1}\rc)$ for an $n\geq 0$. Then $>_n$ induces
\begin{enumerate}
\item
a well-ordering $>'_n$ on $\lc\mapmonoid_n\rc$ by
\begin{equation}
\lc u\rc >'_n \lc v\rc \Longleftrightarrow u>_n v;
\mlabel{eq:orderbr}
\end{equation}
\item
then a well-ordering $>''_n$ on $Z\sqcup \lc \mapmonoid_n\rc$ by Lemma~\mref{lem:ord}.(\mref{it:union}); \item
then a well-ordering $>'''_n$ on $Z\sqcup \lc \mapmonoid_n\rc$ by
\begin{equation}
u>'''_n v\Longleftrightarrow \left\{ \begin{array}{l} \text{\ either\ } \deg_{_Z}(u)>\deg_{_{Z}} (v) \\
\text{or\ } \deg_{_Z}(u)=\deg_{_Z}(v) \text{\ and\ } u>''_n v. \end{array} \right .
\mlabel{eq:diffordn}
\end{equation}
\item
then the lexicographic well-ordering $>_{n+1}$ on $\mapmonoid_{n+1}=M(Z\sqcup \lc \mapmonoid_n\rc)$ induced by $>_n'''$.
\end{enumerate}
The orders $>_n$ are compatible with the direct system $\{\mapmonoid_n\}_{n\geq 0}$ and hence induces a well-ordering, still denoted by $>$, on
$\mapmonoid(Z)=\dirlim \mapmonoid_n$.

\begin{exam}
Under this order, $\lc xy\rc$ is greater than $1, x, y$ and their iterated operations under $\lc\ \rc$. Thus $\lc xy\rc$ is the leading term for $\phi(x,y)=\lc xy\rc -N(x,y)$ when $N(x,y)$ is in DRF, in particular, for those $N(x,y)$ listed in Conjecture~\mref{con:diffclass}.
\end{exam}

\begin{lemma}{\sl
The order $>$ on $\mapmonoid(Z)$ is a monomial order.
\label{lem:mord}}
\end{lemma}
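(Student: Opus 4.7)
The plan is to verify the three defining conditions for $>$ to be a monomial order on $\mapm{Z}$, as given in Eq.~(\mref{eq:ordering}): $>$ is a well-ordering, $\bfone$ is the minimum element, and $u < v$ implies $q|_u < q|_v$ for every $q \in \mapm{Z}^\star$.

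First I would prove by induction on $n$ that each $>_n$ is a well-ordering on $\mapmonoid_n$ and that the embeddings $\iota_{n-1}\colon \mapmonoid_{n-1} \hookrightarrow \mapmonoid_n$ are strictly order-preserving, so that $>$ is a well-defined total order on $\mapmonoid(Z) = \dirlim \mapmonoid_n$. The base case $n=0$ is Lemma~\mref{lem:ord}(\mref{it:monoid}) applied to the well-ordered set $Z$. In the inductive step, $>'_n$ is a well-ordering (being an order-isomorphic relabeling of $>_n$ via Eq.~(\mref{eq:orderbr})), $>''_n$ is a well-ordering by Lemma~\mref{lem:ord}(\mref{it:union}), $>'''_n$ is a well-ordering as a lexicographic refinement of the $\deg_Z$ well-order by $>''_n$, and finally $>_{n+1}$ is a well-ordering by Lemma~\mref{lem:ord}(\mref{it:monoid}). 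Order-preservation of $\iota_{n-1}$ follows by tracing the construction: the embedding reinterprets a bracketed word of level $n-1$ as one of level $n$ without altering its underlying string in $M(Z \cup \{\lc,\rc\})$, so lex comparison gives the same answer at either level. That $\bfone$ is the minimum is immediate from the lex convention $u >_{\text{lex}} 1$ for $u \neq 1$ at every level, and is preserved under the embeddings.

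The most delicate step is extending the well-ordering from each $\mapmonoid_n$ to the direct limit. Given a non-empty $S \subseteq \mapmonoid(Z)$, one cannot simply restrict to $S \cap \mapmonoid_{n_0}$ for some $n_0$, because elements of higher depth may lie below $s_0 \in S \cap \mapmonoid_{n_0}$ — for instance, $\lc \bfone \rc$ lies below any $z \in Z$ because $\deg_Z$ distinguishes them in $>'''_0$. My plan is to argue directly that no infinite descending chain exists: in a hypothetical chain $u_1 > u_2 > \cdots$, the $\deg_Z$-primacy built into $>'''_n$ forces $\deg_Z$ of the leading letter to be non-increasing along the chain, and once it stabilises, the length of the underlying string and the depth of the leading letter become the governing well-founded measures. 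A careful induction on these rank parameters, combined with the fact that each $\mapmonoid_N$ is already well-ordered by $>_N$, should rule out an infinite descent. This is where I expect the main technical obstacle, since one has to handle chains whose depths $d(u_i)$ tend to infinity.

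For the last condition, $u < v \Rightarrow q|_u < q|_v$, I would use structural induction on $q \in \mapm{Z}^\star$. The base case $q = \star$ is trivial. For the inductive step, take the standard decomposition $q = w_1 \cdots w_k$ and let $w_j$ be the unique indecomposable component containing $\star$. If $w_j = \star$, then $q|_u$ and $q|_v$ share the common prefix $w_1 \cdots w_{j-1}$ and common suffix $w_{j+1} \cdots w_k$, so the lex comparison reduces to comparing $u$ with $v$; the two-sided compatibility of the lex order with concatenation (which in turn rests on the $\deg_Z$-primacy of $>'''$) propagates the inequality through the suffix. If instead $w_j = \lc q' \rc$ for some $q' \in \mapm{Z}^\star$ of smaller structural complexity, induction yields $q'|_u < q'|_v$, hence $\lc q'|_u \rc <'_m \lc q'|_v \rc$ by Eq.~(\mref{eq:orderbr}) at the appropriate level $m$, and concatenation compatibility again finishes the argument. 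Throughout, I would lift $u$, $v$, and $q$ into a common $\mapmonoid_N$ using the order-preserving embeddings before invoking the lex comparison, so that all comparisons take place within a single well-ordered $\mapmonoid_N$.
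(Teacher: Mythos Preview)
Your treatment of the compatibility condition $u<v\Rightarrow q|_u<q|_v$ by structural induction on $q$ is essentially the paper's argument: the paper inducts on the least $n$ with $q\in\mapmonoid_n(Z\sqcup\{\star\})$, writes $q=apb$ with $p$ the indecomposable factor containing $\star$, and handles the two cases $p=\star$ (concatenation compatibility of the lex order) and $p=\lc p'\rc$ (induction hypothesis plus Eq.~(\mref{eq:orderbr})) exactly as you outline.

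Where you differ is in scope. The paper's proof of Lemma~\mref{lem:mord} addresses \emph{only} the compatibility condition; the well-ordering of the limit is asserted in the sentence preceding the lemma (``hence induces a well-ordering'') with no further justification, and the $\bfone$-minimality is left implicit. You are right to flag the direct-limit step as delicate: compatibility of the $>_n$ with the embeddings gives a total order on $\mapmonoid(Z)$, but a directed union of well-orders need not be a well-order, and your example $\lc\bfone\rc<z$ shows the down-sets are not eventually contained in any fixed $\mapmonoid_N$. Your proposed strategy---controlling a descending chain by the $\deg_Z$ of leading letters, then by length and depth---is along the right lines but, as you acknowledge, is the one place where a complete argument is still owed. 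The paper simply does not supply one, so on this point you are already ahead of it rather than behind.
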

\begin{proof}
We prove by induction on $n\geq 0$ the claim that for any $q\in \mapmonoid^\star(Z)\cap \mapmonoid_n(Z\sqcup \{\star\})$, $u>v$ in $\mapmonoid(Z)$ implies $q|_u>q|_v$.

When $n=0$, we have $q\in M(Z\sqcup \{\star\})$ in which $\star$ only appears once. Thus $q=a\star b$ with $a,b\in M(Z)$. Thus $u>v$ in $\mapmonoid(Z)$ implies that $aub>avb$ by the definition of lexicographic order.

Suppose the claim has been proved for all $q\in
\mapmonoid^\star(Z)\cap \mapmonoid_n(Z\sqcup \{\star\})$ for an
$n\geq 0$. Consider $q\in \mapmonoid^\star(Z)\cap
\mapmonoid_{n+1}(Z\sqcup \{\star\})$. Then $q=apb$ with $p\in
\mapmonoid^\star(Z)\cap \mapmonoid_{n+1}(Z\sqcup \{\star\})$ being
indecomposable and $a, b\in \mapmonoid_{n+1}(Z)$. Thus $p\in Z$ is
impossible. So we have $p\in \lc \mapmonoid_n(Z\sqcup \{\star\})\rc$.
Then $p=\lc p'\rc$ and $p$ is in $\mapmonoid^\star(Z)\cap\mapmonoid_n(Z\sqcup
\{\star\})$. Thus by the induction hypothesis, if $u>v$, then
$p'|_u>p'|_v$. Then by Eq.~(\mref{eq:orderbr}), we also have $p|_u>p|_v$ and hence
$q|_u>q|_v$ by the lexicographic order. This completes the
induction.
\end{proof}

We next extend the concept of reduction relation from polynomial algebras $\bfk[Z]$~\cite[Section 8.2]{BN} to operated polynomial algebras $\bfk\mapm{Z}$.

\begin{mdefn}
Let $Z$ be a set and let $<$ be a monomial well-ordering on $\mapmonoid(Z)$. Let $f\in \bfk\mapm{Z}$ be monic. We use $f$ to define the following {\bf reduction relation} $\rightarrow_f$: For $g, g'\in \bfk\mapm{Z}$, define $g\rightarrow_f g'$ if there is $q\in \mapmonoid^\star(Z)$ such that
\begin{enumerate}
\item
$q|_{\overline{f}}$ is a monomial of $g$ with coefficient $c$,
\item
$g'=g-cq|_{f}$.
\end{enumerate}
\mlabel{de:redrel}
\end{mdefn}
In other words, $g'$ is obtained by replacing a subword $\overline{f}$ in a monomial of $g$ by $\overline{f}-f$.
If $F$ is a set of monic bracketed polynomials, we define
$$\rightarrow_F:= \cup_{f\in F} \rightarrow_f.$$

We refer the reader to~\cite{BN} for concepts in rewriting systems, such as joinable and convergence.

\begin{mprop}
{\sl Let $Z$ be a set and let $\mapmonoid(Z)$ be equipped with a
monomial well-ordering $<$. Let $F$ be a set of monic bracketed
polynomials. Then the reduction relation $\rightarrow_F$ is a
terminating relation. \mlabel{pp:term}}
\end{mprop}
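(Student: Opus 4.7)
The plan is to show that each single-step reduction strictly decreases a certain well-founded measure of $g$, namely the support of $g$ ordered by the Dershowitz--Manna multiset extension of the monomial well-order $<$ on $\mapmonoid(Z)$. Since $<$ is a well-order, this multiset extension is well-founded on finite subsets, and termination follows immediately.

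First, I would unpack a single reduction step. Suppose $g \rightarrow_F g'$ is witnessed by $f \in F$, $q \in \mapmonoid^\star(Z)$, and scalar $c$, so that $q|_{\overline{f}}$ is a monomial of $g$ with coefficient $c$ and $g' = g - cq|_f$. Write $f = \overline{f} + f'$ where every monomial in $f'$ is $<\overline{f}$ (this uses that $f$ is monic). Then
$$g' = \bigl(g - cq|_{\overline{f}}\bigr) - cq|_{f'}.$$
Let $m_0 := q|_{\overline{f}}$. The coefficient of $m_0$ in $g'$ is zero, so $m_0 \notin \supp(g')$ (any new contribution to the $m_0$-coefficient coming from $-cq|_{f'}$ is impossible since Lemma~\mref{lem:lead}(\mref{it:lead1}) gives $\overline{q|_{f'}} = q|_{\overline{f'}} < q|_{\overline{f}} = m_0$). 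In particular, every monomial occurring in the ``correction term'' $-cq|_{f'}$ is strictly less than $m_0$ in the order $<$.

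Second, I would invoke the multiset ordering. Recall that in the Dershowitz--Manna order $\prec_{\mathrm{mul}}$ on finite multisets over a well-ordered set, one multiset is strictly smaller than another when it is obtained by removing at least one element and then inserting any finite number of strictly smaller elements; well-foundedness of the base order implies well-foundedness of $\prec_{\mathrm{mul}}$. Viewing $\supp(g)$ as a multiset (with all multiplicities one), the analysis above yields $\supp(g') \prec_{\mathrm{mul}} \supp(g)$: the element $m_0$ is removed, and any new elements inserted from $-cq|_{f'}$ are strictly smaller than $m_0$. Possible cancellations between $-cq|_{f'}$ and the remaining part of $g$ can only remove further elements, which still produces a strict decrease in the multiset order. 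An infinite chain $g_1 \rightarrow_F g_2 \rightarrow_F \cdots$ would therefore induce an infinite strictly descending chain $\supp(g_1) \succ_{\mathrm{mul}} \supp(g_2) \succ_{\mathrm{mul}} \cdots$ of finite multisets, contradicting well-foundedness. Hence $\rightarrow_F$ terminates.

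I do not anticipate a serious obstacle: the only subtle point is the bookkeeping of cancellations when the correction term $-cq|_{f'}$ overlaps with monomials already present in $g - cq|_{\overline{f}}$, but such cancellations only further shrink the support and therefore cannot disturb the multiset-decrease argument. The crucial ingredient, Lemma~\mref{lem:lead}(\mref{it:lead1}), is already established, so the proof is essentially the standard termination argument for rewriting by monic polynomials, transplanted to the operated setting.
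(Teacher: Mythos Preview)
Your proof is correct and follows essentially the same approach as the paper: both arguments assign to each bracketed polynomial its set of monomials, equip finite multisets over $\mapmonoid(Z)$ with the Dershowitz--Manna multiset extension of $<$, and verify that a single reduction step replaces $q|_{\overline{f}}$ by strictly smaller monomials. Your treatment is slightly more explicit about cancellations and invokes Lemma~\mref{lem:lead}(\mref{it:lead1}) by name where the paper appeals directly to the monomial-order axiom, but the substance is identical.
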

See~\cite[Prop. 8.2.9]{BN} for the case of polynomials.

\begin{proof}
For each $f\in \bfk\mapm{Z}$, let $M(f)$ denote the set of monomials in $f$. Let $>_{mul}$ denote the multiset order on the set $\calm(\mapmonoid(Z))$ of finite multisets over $\mapmonoid(Z)$ induced by $>$ on $\mapmonoid(Z)$. Then by \cite[Theorem~2.5.5]{BN}, the order $>_{mul}$ is terminating. Thus we just need to show that if $g\to_F g'$, then $M(g)>_{mul} M(g')$.
If $g\to_F g'$, then there are $f\in F$, $q\in \mapmonoid^\star(Z)$ such that $q|_{\overline{f}}$ is a monomial of $g$ with coefficient $c\neq 0$ and such that $g'=g-cq|_f$. Since $<$ is a monomial well order, all terms in $q_{f-\overline{f}}$ are smaller than $q|_{\overline{f}}$. Thus $M(g')$ is obtained from $M(g)$ by replacing the monomial $q|_{\overline{f}}$ by smaller monomials. This implies $M(g)>_{mul} M(g')$.
\end{proof}

We also prove the following variation of \cite[Lemma 8.3.3]{BN}.
\begin{lemma}
Let $f, g\in \bfk\mapm{Z}$. If $f-g$ is reduced to zero. Then $f$ and $g$ are joinable.
\mlabel{lem:833}
\end{lemma}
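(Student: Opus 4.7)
The plan is induction on the length $n$ of a reduction sequence $f - g \to_F^{n} 0$, adapting the classical commutative-polynomial argument of \cite[Lemma 8.3.3]{BN} to the bracketed setting. The base case $n = 0$ forces $f = g$, so $f$ and $g$ are trivially joinable via the empty reduction.

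For the inductive step, I would inspect the first step $f - g \to_F h_1$. By Definition \ref{de:redrel} it is witnessed by some $s \in F$ and some $q \in \mapm{Z}^\star$ such that $q|_{\overline{s}}$ is a monomial of $f - g$ with nonzero coefficient $c$, and $h_1 = (f - g) - c\, q|_s$. Writing $a$ and $b$ for the coefficients of the bracketed word $q|_{\overline{s}}$ in $f$ and $g$ respectively, one has $c = a - b \neq 0$, so at least one of $a, b$ is nonzero. I would then split into three cases: if both $a, b \neq 0$, reduce $f \to_F f_1 := f - a\, q|_s$ and $g \to_F g_1 := g - b\, q|_s$; if only $a \neq 0$, reduce only $f$ and keep $g_1 := g$; symmetrically if only $b \neq 0$. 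In every case a one-line computation gives $f_1 - g_1 = h_1$, so the inductive hypothesis applied to the length $n-1$ reduction $f_1 - g_1 \to_F^{n-1} 0$ produces a common reduct $h$ of $f_1$ and $g_1$. Composing, $f \to_F^{*} f_1 \to_F^{*} h$ and $g \to_F^{*} g_1 \to_F^{*} h$, showing that $f$ and $g$ are joinable.

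The main obstacle I expect is not conceptual but careful bookkeeping of coefficients in the splitting of the single reduction step on $f - g$ into (one or two) reduction steps on $f$ and $g$. The one thing to verify is that whenever $a \neq 0$ the rewrite $f \to_F f - a\, q|_s$ really is a legal step of $\to_F$, which is immediate from Definition \ref{de:redrel} once one notes that $q|_{\overline{s}}$ is then genuinely a monomial of $f$ with nonzero coefficient $a$. No use of composition triviality, confluence, or the termination result of Proposition \ref{pp:term} is required; the argument simply tracks how a cancellation in $f - g$ is witnessed by parallel reductions in $f$ and $g$.
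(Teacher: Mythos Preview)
Your proof is correct and follows essentially the same induction on reduction length as the paper's proof, which likewise splits the first step of the reduction of $f-g$ into parallel reductions on $f$ and $g$ via their respective coefficients $a$ and $b$ of the chosen monomial. Your explicit case split according to whether $a$ or $b$ vanishes is in fact more careful than the paper, which leaves that bookkeeping implicit.
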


\begin{proof}
We use induction on the number $n$ of iterations of applying $\to_F$ to $f-g$ to get zero. If $n=0$, then $f-g=0$ and there is nothing to prove. Suppose the conclusion of the lemma holds with $n\geq 0$ iterations and consider the case of $n+1$. Suppose the first reduction relation is $\to_{f_i}$ for an $f_i\in F$ by applying $f_i$ to a monomial $m$ and $m$ appears in $f$ (resp. $g$) with coefficient $a$ (resp. $b$). So $m=q|_{\overline{f_i}}$ for some $q\in \mapmonoid^\star(Z)$. Then we obtain $f-g \to_{f_i} h$ where
$$ h= (f-g)-(a-b)q|_{f_i} = (f-aq|_{f_i})-(g-bq|_{f_i}).$$
Since $h$, that is the right hand side, is reduced to zero with $n$ iterations of reductions, by the induction hypothesis, $f-aq|_{f_i}$ and $g-bq|_{f_i}$ are joinable. Then it follows that $f$ and $g$ are joinable. \end{proof}

\begin{theorem}{\sl
Let $\phi(x,y):=\delta(xy)-N(x,y)\in \bfk\mapm{x,y}$ with $N(x,y)$
in DRF and totally linear in $x, y$. The following statements are equivalent.
\begin{enumerate}
\item
$\phi(x,y)$ is of differential type;
\mlabel{it:maind}
\item
The rewriting system $\Sigma_\phi$ is convergent;
\mlabel{it:mainc}
\item
Let $Z$ be a set with a well-ordering. With the order $>$ in Eq.~(\ref{eq:diffordn}), the
set
$$
S:=S_\phi:=\left\{\phi(u,v)=\delta(uv)-N(u,v) |\  u,v \in
\mapmonoid(Z)\backslash\{1\}\right\}
$$
is a Gr\"{o}bner-Shirshov basis in $\bfk\mapm{Z}$.
\mlabel{it:maings}
\item
The free $\phi$-algebra on a set $Z$ is the noncommutative polynomial $\bfk$-algebra $\bfk\langle \Delta(Z)\rangle$ where $\Delta(Z)$ is defined in Eq.~(\mref{eq:Delta}), together with the operator $d:=d_Z$ on $\bfk\langle \Delta(Z)\rangle$ defined by the following recursion:

Let $u=u_1u_2\cdots u_k \in
M(\Delta(Z))$, where $u_i\in \Delta(Z), 1\leq i\leq k$.
\begin{enumerate}
\item If $k=1$, i.e., $u=\delta^i(x)$ for some $i\geq 0, x\in Z $,
 then define $d(u)=\delta^{i+1}(x)$.
\item
If $k\geq 1$, then recursively define
$
d(u)=N(u_1,u_2\cdots u_k).
$
\end{enumerate}
\mlabel{it:mainf}
\end{enumerate}
\mlabel{thm:gsdiff}}
\end{theorem}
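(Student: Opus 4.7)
The plan is to establish the equivalences via the cycle $(1) \Longleftrightarrow (3) \Longleftrightarrow (4)$ combined with $(2) \Longleftrightarrow (3)$, using the Composition-Diamond Lemma (Theorem~\ref{thm:CDL}) as the central tool. First I would verify that, under the monomial ordering of Eq.~(\ref{eq:diffordn}), the leading monomial of each $\phi(u,v) = \delta(uv) - N(u,v) \in S_\phi$ is $\delta(uv) = \lc uv \rc$. Because $N$ is in DRF and totally linear in $x,y$, every monomial of $N(u,v)$ has its outermost $\delta$-applications only on the individual arguments $u$ or $v$ (never on a product), so each such monomial is strictly smaller than $\lc uv \rc$ in the lexicographic extension of the $Z$-degree ordering. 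This shows $S_\phi$ consists of monic bracketed polynomials, and Proposition~\ref{pp:term} then guarantees termination of $\to_{\Sigma_\phi}$.

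For $(1) \Longleftrightarrow (3)$, I would enumerate and analyze all compositions within $S_\phi$. Since every leading term has breadth one, an intersection composition $(\phi(u,v), \phi(u',v'))_w^{\mu,\nu}$ forces $\mu = \nu = 1$ and $uv = u'v'$ as words. Writing this common word as $u_1 u_2 u_3$ with each $u_i \in \mapmonoid(Z)\setminus\{1\}$, and considering the two factorizations $u_1 \cdot (u_2 u_3)$ and $(u_1 u_2)\cdot u_3$, the composition equals $N(u_1 u_2, u_3) - N(u_1, u_2 u_3)$, whose triviality modulo $(S_\phi, \lc u_1 u_2 u_3\rc)$ is precisely the differential-type condition of Definition~\ref{de:difftype1}(\ref{it:diff2}). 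Inclusion compositions with $q = \star$ reduce to this intersection case, while those with $q = \lc q' \rc \neq \star$ correspond to a strictly nested occurrence of $\delta(u'v')$ inside $uv$. For these I would argue by induction on the depth of the nested occurrence that reducing the inner $\delta(u'v')$ first and then applying the outer rule produces the same polynomial (modulo smaller terms) as applying the outer rule first, using the total linearity of $N$ to commute the substitution $\delta(u'v') \mapsto N(u',v')$ with the polynomial structure of $N(u,v)$. This yields $(1) \Rightarrow (3)$; the converse is immediate because the associativity-type intersection composition is itself a composition in $S_\phi$.

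For $(3) \Longleftrightarrow (2)$, termination of $\Sigma_\phi$ is Proposition~\ref{pp:term}, and Newman's lemma reduces convergence to local confluence; local confluence corresponds to joinability of all critical pairs, which by Lemma~\ref{lem:833} is equivalent to triviality of all compositions in $S_\phi$. For $(3) \Longleftrightarrow (4)$, I would apply Theorem~\ref{thm:CDL}(\ref{it:cd4}): $(3)$ holds iff $\Irr(S_\phi)$ is a $\bfk$-basis of $\bfk\mapm{Z}/\Id(S_\phi)$. A bracketed word avoids every subword of the form $\lc u'v'\rc$ with $u', v' \neq 1$ iff every bracket it contains encloses a single element of $\Delta(Z)$, so $\Irr(S_\phi) = M(\Delta(Z))$. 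By total linearity of $\phi$, the ideal $\Id(S_\phi)$ coincides with the operated ideal $I_\phi(Z)$, so the quotient is the free $\phi$-algebra on $Z$, and the induced operator on the quotient agrees with the recursive $d$ of (4) because $\delta(u_1 \cdots u_k) \to_{\Sigma_\phi} N(u_1, u_2 \cdots u_k)$ in one step, matching the recursion.

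The hardest part will be the inclusion compositions with $q \neq \star$ in the direction $(1) \Rightarrow (3)$. These arise whenever a nested $\delta$-subword occurs strictly inside a larger bracketed word, and their triviality requires showing that the two natural reduction paths—rewriting the inner occurrence $\delta(u'v')$ before the outer $\delta(uv)$, versus rewriting the outer first—converge to a common normal form. This reduces, via the total linearity of $N$, to the statement that substitution commutes with the rewriting procedure up to smaller-term remainders; establishing it requires a careful induction on the structure of the $\star$-word $q$ and on the nesting depth, and it is this combinatorial bookkeeping that forms the principal technical obstacle.
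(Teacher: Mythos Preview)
Your proposal is correct and close to the paper's proof, but the organization and the handling of the hardest step differ in a way worth noting. The paper routes the cycle as $(1)\Rightarrow(2)\Rightarrow(3)\Rightarrow(1)$ together with $(3)\Leftrightarrow(4)$, whereas you propose $(1)\Leftrightarrow(3)$ directly and $(2)\Leftrightarrow(3)$ separately. Both schemes work, and the ingredients (Proposition~\ref{pp:term} for termination, the Composition-Diamond Lemma for $(3)\Leftrightarrow(4)$, the observation that the only intersection composition is $N(ab,c)-N(a,bc)$) are the same.

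The substantive difference is in the inclusion compositions (your ``hardest part''). You frame this as an induction on nesting depth, arguing that inner-then-outer and outer-then-inner reductions agree. The paper instead dispatches it in one stroke with no induction: writing $N(x,y)=\sum_i c_i\,\phi_i(x,y)$ in monomials, the inclusion composition $f-q|_g$ for $f=\phi(p|_{\delta(rs)},v)$ and $g=\phi(r,s)$ is expanded directly, and the obstruction is exactly
\[
\sum_{i,j} c_i c_j\bigl(\phi_i(p|_{\phi_j(r,s)},v)-\phi_j(p|_{\phi_i(r,s)},v)\bigr),
\]
which vanishes by symmetry in $i,j$; the remaining terms are already of the form $q'|_{s'}$ with $s'\in S$ and leading term below $w$. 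Total linearity of $N$ is what makes this bilinear expansion legitimate. So your instinct that ``substitution commutes with the rewriting procedure'' is right, but the mechanism is a one-line double-sum cancellation rather than a structural induction. This also means the inclusion-composition argument is independent of convergence, so your direct route $(1)\Rightarrow(3)$ is indeed available; the paper simply chose to pass through $(2)$ first.
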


By Theorem~\mref{thm:exam}, we have
\begin{coro}
Let $N(x,y)$ be from the list in Conjecture~\mref{con:diffclass}. Then all the statements in Theorem~\mref{thm:gsdiff} hold.
\mlabel{co:gsdiff}
\end{coro}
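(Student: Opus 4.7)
The plan is to observe that this corollary is essentially immediate from the combination of two results already established in the excerpt. Theorem~\mref{thm:exam} explicitly verifies, case-by-case, that for every $N(x,y)$ appearing in the six-item list of Conjecture~\mref{con:diffclass}, the OPI $\phi(x,y) = \lc xy \rc - N(x,y)$ is of differential type. This is precisely statement~(\mref{it:maind}) of Theorem~\mref{thm:gsdiff}. Since Theorem~\mref{thm:gsdiff} asserts the equivalence of statements (\mref{it:maind})--(\mref{it:mainf}), the remaining three statements about $\Sigma_\phi$-convergence, the Gr\"obner-Shirshov property of $S_\phi$, and the explicit description of the free $\phi$-algebra on $Z$ as $\bfk\langle \Delta(Z)\rangle$ with the recursively defined operator $d_Z$, all follow.

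Concretely, I would structure the proof in two short sentences. First, invoke Theorem~\mref{thm:exam} to assert that each of the six forms listed in Conjecture~\mref{con:diffclass} yields an OPI $\phi$ of differential type, i.e.\ condition~(\mref{it:maind}) holds. Second, invoke the equivalence (\mref{it:maind}) $\Leftrightarrow$ (\mref{it:mainc}) $\Leftrightarrow$ (\mref{it:maings}) $\Leftrightarrow$ (\mref{it:mainf}) supplied by Theorem~\mref{thm:gsdiff} to conclude that all four statements hold simultaneously for $\phi$.

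There is no real obstacle here, since all substantive work has been done upstream: the DRF check and the differential $\phi$-reducibility of $N(uv,w) - N(u,vw)$ were carried out in the proof of Theorem~\mref{thm:exam} (using the factorization trick $cN(x,y) + bxy = \alpha(x)\alpha(y)$ in Cases~1 and~2, and direct associativity-based reductions in Cases~3--6), and the nontrivial implications (\mref{it:maind}) $\Rightarrow$ (\mref{it:mainc}), (\mref{it:mainc}) $\Rightarrow$ (\mref{it:maings}), (\mref{it:maings}) $\Rightarrow$ (\mref{it:mainf}) are assumed as the content of Theorem~\mref{thm:gsdiff}. Thus the proof amounts to a citation of these two results and requires no further computation.
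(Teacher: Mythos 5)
Your proposal is correct and matches the paper exactly: the paper derives this corollary directly from Theorem~\ref{thm:exam} (which verifies condition~(\ref{it:maind}) for each listed $N(x,y)$) combined with the equivalences of Theorem~\ref{thm:gsdiff}. No further argument is needed.
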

When $N(x,y)=x\delta(y)+\delta(x)y+\lambda \delta(x)\delta(y),$ we
obtain \cite[Theorem~5.1]{BCQ}.

\begin{proof}
(\mref{it:maind}) $\Longrightarrow$ (\mref{it:mainc}) We first note that the rewriting system $\Sigma_\phi$ in Definition~\mref{def:phireduced} is the same as the reduction relation $\to_{S_\phi}$ with
$$S_\phi:=\{\phi(u,v)\,| u,v\in \bfk\mapm{Z} \},$$
with the order in Eq.~(\mref{eq:diffordn}). Thus by Proposition~\mref{pp:term}, $\Sigma_\phi$ is terminating.
Consequently, by \cite[Lemma 2.7.2]{BN}, to prove that $\Sigma_\phi$ is confluent and hence convergent, we just need to prove that $\Sigma_\phi$ is locally confluent. Suppose $g_1{\ }_{\Sigma_\phi}\!\!\!\!\leftarrow f \rightarrow_{\Sigma_\phi} g_2$ for $f\in \mapmonoid(Z)$ and $g_1, g_2\in \bfk\mapm{Z}.$ Then there are $q_1, q_2\in \mapmonoid^\star(Z)$ and $s_1, s_2\in S_\phi(Z)$ such that $$q_1|_{\overline{s_1}}=f=q_2|_{\overline{s_2}}, \ g_1=q_1|_{\overline{s_1}-s_1},\ g_2=q_2|_{\overline{s_2}-s_2}.$$
Since $s_1, s_2$ are in $S_\phi(Z)$, we can write
\begin{eqnarray}
 s_1 &=& \phi(u,v)=\delta(uv)-N(u,v)=\delta(uv)-\sum_{i} c_i \phi_i(u,v),\nonumber\\
s_2 &=& \phi(r,s)=\delta(rs)-N(r,s)=\delta(rs)-\sum_{i} c_i \phi_i(r,s),
\mlabel{eq:phimon}
\end{eqnarray}
for some $u, v, r, s \in \mapmonoid(Z)\backslash\{1\}$. Here we have used the notation
$$N(x,y)=\sum_{i=1}^k c_i \phi_i(x,y), \phi_i(x,y)\in \mapmonoid(x,y), 1\leq i\leq k.$$
As in the proof of Lemma~\mref{lem:Scong}, there are three cases to consider.
\smallskip

\noindent {\bf Case I.} Suppose the bracketed words $\overline{s_1}$
and $\overline{s_2}$ are disjoint in $f$. Let $q\in
\mapmonoid^{\star_1,\star_2}(X)$ be the
$(\star_1,\star_2)$-bracketed word obtained by replacing this
occurrence of $\overline{s_1}$ (resp. $\overline{s_2}$) in $f$ by
$\star_1$ (resp. $\star_2$). Then we have
$$f=q|_{\overline{s_1},\
\overline{s_2}}=q_1|_{\overline{s_1}}=q_2|_{\overline{s_2}}.
$$
Then we have
$$ q_1|_{s_1}=q|_{s_1,\overline{s_2}}, \quad q_2|_{s_2}=q|_{\overline{s_1},s_2}.$$
Hence
$$ g_1=q_1|_{s_1-\overline{s_1}} =q|_{s_1-\overline{s_1},\overline{s_2}}
\mapsto_{\Sigma_\phi} q|_{s_1-\overline{s_1}, s_2-\overline{s_2}}.$$
Similarly, $g_2 \mapsto_{\Sigma_\phi} q|_{s_1-\overline{s_1}, s_2-\overline{s_2}}.$
This proves the local confluence.

\smallskip

\noindent {\bf Case II.} Suppose the bracketed words
$\overline{s_1}$ and $\overline{s_2}$ have nonempty intersection in
$f$ but are not a proper subword of each other. Since $\overline{s_1}=\delta(uv)$ and $\overline{s_2}=\delta(rs)$ are indecomposable in $\mapmonoid(Z)$, this is possible only when $\delta(uv)=\delta(rs)$. Thus $uv=rs$. Factoring each of $u,v,r,s$
into standard decompositions, we see that there are $a,b,c\in
\mapmonoid(Z)$ such that $u=ab, v=c$ and $r=a, s=bc$.
Then we have
$\bar{s_1}=\delta(abc)=\bar{s_2}$ and

$$
g_1-g_2 =N(ab,c)-N(a,bc). $$

Since $u,v,r,s\neq 1$, We have $a,c\neq 1$. If $b=1$, then $g_1-g_2$ is already zero. If $b\neq 1$, then since $\phi$ is of differential type, $g_1-g_2$ is reduced to zero. Then by Lemma~\mref{lem:833}, $g_1$ and $g_2$ are joinable.
\smallskip

\noindent {\bf Case III.} Suppose one of the bracketed words
$\overline{s_1}$ and  $\overline{s_2}$ is contained in the other.
Without loss of generality, suppose
$\overline{s_1}=q|_{\overline{s_2}}$ for some $\star$-bracketed word $q\in \mapmonoid^\star(Z)$. This means that
$\delta(uv)=\overline{s_1}=q|_{\overline{s_2}}=q|_{\delta(rs)}.$
Then $q=\delta(q')$ for some $\star$-bracketed word $q'$ and hence $\delta(uv)=q|_{\delta(rs)}=\delta(q'|_{\delta(rs)}).$
This gives $uv=q'|_{\delta(rs)}$. Since $u,v\in \mapmonoid(Z)\backslash\{1\}$, we have either $q'=pv$ with $p|_{\delta(rs)}=u$ or $q'=up$ with $p|_{\delta(rs)}=v$, where $p\in \mapmonoid^\star(Z)$.
Without loss of generality, suppose $q'=pv$ with $p|_{\delta(rs)}=u$.
Then we have
$$N(p|_{\delta(rs)},v)=N(u,v){\ }_{\Sigma_\phi}\!\!\!\leftarrow \delta(uv) = \delta(p|_{\delta(rs)} v) \rightarrow_{\Sigma_\phi} \delta(p|_{N(r,s)}v).$$
Using the notations in Eq.~(\mref{eq:phimon}), we obtain \begin{eqnarray*}
N(p|_{\delta(rs)},v)-\delta(p|_{N(r,s)}v)
&=& \sum_{i=1}^k c_i \phi_i(p|_{\delta(rs)},v) -\sum_{i=1}^k c_i
\delta(p|_{\phi_i(r,s)} v)\\
&\mapsto_{\Sigma_\phi}&
\sum_{i=1}^k c_i \phi_i(p|_{N(r,s)},v) -\sum_{i=1}^k c_i
N(p|_{\phi_i(r,s)}, v)
\\&=&
\sum_{i=1}^k c_i \sum_{j=1}^k c_j\phi_i(p|_{\phi_j(r,s)},v) -\sum_{i=1}^k c_i \sum_{j=1}^k c_j
\phi_j(p|_{\phi_i(r,s)}, v)
\\&=& 0
\end{eqnarray*}
since the two double sums become the same after exchanging $i$ and $j$.

Note that $N(p|_{\delta(rs)},v)=\overline{s_1}-s_1$ and $\delta(p|_{N(r,s)}v)=q|_{\overline{s_2}-s_2}$. We then see that $s_1-\overline{s_1}$ and $q|_{s_2-\overline{s_2}}$ are joinable by~\cite[Lemma 8.3.3]{BN}. Then $g_1=q_1|_{s_1-\overline{s_1}}$ and
$g_2=q_1|_{q|_{s_2-\overline{s_2}}}=(q_1|_{q})|_{s_2-\overline{s_2}} =q_2|_{s_2-\overline{s_2}}$ are joinable. This proves the local confluence in Case III and hence the proof of (\mref{it:maind}) $\Longrightarrow$ (\mref{it:mainc}).

\smallskip

\noindent
(\mref{it:mainc}) $\Longrightarrow$ (\mref{it:maings})
Suppose $\Sigma_\phi$ is convergent. We prove that all compositions from $S$ are trivial modulo
$(S,w)$.

\noindent {\bf (The case of intersection compositions). } By the
definition of $N(x,y)$ being in DRF, we have
$$\overline{\phi(x,y)}=\delta(xy).$$

Let two elements of $S$ be given. They are of the form
$$f:=\phi(u,v), \quad g:=\phi(r,s), \quad u,v,r,s\in \mapmonoid(Z)
\backslash\{1\}.$$ Hence $\bar{f}=\delta(uv)$ and
$\bar{g}=\delta(rs)$. Suppose $w=\bar{f}\mu=\nu\bar{g}$ gives an
intersection composition, where $\mu,\nu\in \mapm{X}$. Since
$(\bre{\bar{f}})=(\bre{\bar{g}})=1$, we must have
$\bre{w}<\bre{\bar{f}}+\bre{\bar{g}}=2$. Thus $\bre{w}=1$. This
means that $\bre{\mu}=\bre{\nu}=0$. Since $f, g$ are monic, we have
$\mu=\nu=1$. Thus $w=\bar{f}=\bar{g}$. That is,
$\delta(uv)=\delta(rs)$. Thus $uv=rs$. Factoring each of $u,v,r,s$
into standard decompositions, we see that there are $a,b,c\in
\mapm{X}$ such that $u=ab, v=c$ and $r=a, s=bc$. Therefore,
$f=\phi(ab,c)$ and $g=\phi(a,bc)$ is the only pair that gives
intersection composition. Then we have
$w=\bar{f}=\delta(abc)=\bar{g}$ and the resulting composition is

\begin{equation}
(f,g)_w:= f-g =-N(ab,c)+N(a,bc). \mlabel{eq:diffint}
\end{equation}

Since $N(ab,c){\ }_{\Sigma_\phi}\!\!\!\leftarrow \delta(abc) \rightarrow_{\Sigma_\phi} N(a,bc)$ and $\Sigma_\phi$ is confluent, we find that $N(ab,c)$ and $N(a,bc)$ are joinable. Hence $N(ab,c)-N(a,bc)$ is reduced to zero. In particular, $N(ab,c)-N(a,bc)$ is in $\Id(S)$. Since $\overline{\phi(ab,c)}=\delta(abc)=\overline{\phi(a,bc)}$, we have $\overline{N(ab,c)}<\delta(abc)$ and $\overline{N(a,bc)}<\delta(abc)$. Thus $N(ab,c)-N(a,bc)$ is trivial modulo $(S,\delta(abc))$.

\smallskip

\noindent {\bf (The cases of including compositions).} On the other
hand, $f$ and $g$ could only have the following including
compositions:
\begin{enumerate}
\item
If $u=p|_{\delta(rs)}$ for some $p\in \mapmonoid^{\star}(Z)$, then
$$w:=\overline{f}=q|_{\overline{g}}=\delta(p|_{\delta(rs)}v),$$
with $q:=\delta(pv)$.
\item
If $v=p|_{\delta(rs)}$ for some $p\in \mapmonoid^{\star}(Z)$, then
$$w:=\overline{f}=q|_{\overline{g}}=\delta(u\,p|_{\delta(rs)}),$$
with $q:=\delta(up)$.
\end{enumerate}

So we just need to check that in both cases these compositions are
trivial modulo $(S,w)$. Consider the first case. Using the
notation in Eq.~(\mref{eq:phimon}), this composition is

\begin{eqnarray*}
(f,g)_w&:=& f- q|_g \\
&=& \delta(uv)-\sum_{i=1}^kc_i\phi_i(u,v)
- \delta(p|_{g}\,v)\\
&=& \delta(p|_{\delta(rs)}v) -\sum_{i=1}^k c_i \phi_i(p|_{\delta(rs)},v)
-\left( \delta(p|_{\delta(rs)} v) -\sum_{i=1}^k c_i
\delta(p|_{\phi_i(r,s)} v)\right)\\
&=& -\sum_{i=1}^k c_i \phi_i(p|_{\delta(rs)},v) +\sum_{i=1}^k c_i
\delta(p|_{\phi_i(r,s)} v)\\
&=& -\sum_{i=1}^k c_i \phi_i(p|_{\phi{(r,s)}},v)-\sum_{i=1}^k c_i \phi_i(p|_{N(r,s)},v)+\sum_{i=1}^k c_i
\phi(p|_{\phi_i(r,s)} v)+\sum_{i=1}^k c_i
N(p|_{\phi_i(r,s)}, v)\\
&=&
-\sum_{i=1}^k c_i \phi_i(p|_{\phi{(r,s)}},v)+\sum_{i=1}^k c_i
\phi(p|_{\phi_i(r,s)} v)\\
&&
-\sum_{i=1}^k c_i \sum_{j=1}^k c_j \phi_i(p|_{\phi_j(r,s)},v)
+ \sum_{i=1}^k c_i \sum_{j=1}^k c_j \phi_j(p|_{\phi_i(r,s)}, v) \\
&=& -\sum_{i=1}^k c_i \phi_i(p|_{\phi{(r,s)}},v)+\sum_{i=1}^k c_i
\phi(p|_{\phi_i(r,s)}, v),
\end{eqnarray*}
since the double sums become the same after exchanging $i$ and $j$. Since $\overline{\phi(r,s)}=\delta(rs)$ we have $\phi_i(p|_{\overline{\phi(r,s)}}\,,
v)= \phi_i(p|_{\delta(rs)}, v)<w$. Thus the first sum is trivial
modulo $(S,w)$. Further every term $u_i:=\phi(p|_{\phi_i(r,s)},
v)$ in the second sum is already in $S$. So it is just
$\star|_{u_i}$ for the $\star$-bracketed word $\star$. We have
$$\overline{u_i}=\overline{\phi(p|_{\phi_i(r,s)}, v)}=
\delta(p|_{\phi_i(r,s)}\, v)<w.$$
Thus the second sum is also trivial modulo $(S,w)$. This proves
$(f,g)_w\equiv 0 \mod (S,w)$.

The proof of the second case is the same.

\smallskip

\noindent
(\mref{it:maings}) $\Longrightarrow$ (\mref{it:maind}) Suppose that a $\phi(x,y):=\delta(xy)-N(x,y)\in \bfk\mapm{x,y}$ with
$N(x,y)$ in DRF is such that
$$ S:=\{\phi(u,v)\,|\, u,v\in \bfk\mapm{Z}\}$$
is a Gr\"obner-Shirshov basis in $\bfk\mapm{Z}$ for any $Z$ with the
order $>$ in Eq.~(\mref{eq:diffordn}). Let $a,b,c\in
\mapmonoid(Z)\backslash \{1\}$. For $f=\phi(ab,c), g=\phi(a,bc)$, we
have $w:=\bar{f}\mu=\delta(abc)=\nu\bar{g}$ with $\mu=\nu=1$. Thus
we have an intersection composition
$$(f,g)_w^{1,1}:=f-g = -N(ab,c)+N(a,bc).$$
If $N(ab,c)=N(a,bc)$, then there is nothing to prove. If
$N(ab,c)-N(a,bc)\neq 0,$ then since $-N(ab,c)+N(a,bc)$ is in
$\Id(S)$ and $S$ is a Gr\"obner-Shirshov basis, by
Theorem~\mref{thm:CDL}, we have
$$ -N(ab,c)+N(a,bc)=\sum_{i=1}^n a_i q_i|_{s_i},$$
where $a_i\in \bfk, q_i\in \mapmonoid^\star(Z)$ and $s_i\in S, 1\leq
i\leq n$. This means that $-N(ab,c)+N(a,bc)$ is reduced to zero by
the rewriting system $\Sigma_\phi$ defined in Eq.~(\mref{eq:Sphi}).
Hence $\phi$ is of differential type.
\smallskip

\noindent
(\mref{it:mainf}) $\Longrightarrow$ (\mref{it:maings}) Suppose Item~\mref{it:mainf} holds. Then in particular $M(\Delta(Z))$ is a linear basis o $\bfk\mapm{Z}/I_{\phi}(Z)$. Then the conclusion follows from ($(4)\Longrightarrow (1)$) in Theorem~\mref{thm:CDL}.

\smallskip

\noindent
(\mref{it:maings}) $\Longrightarrow$ (\mref{it:mainf})
By Theorem~\mref{thm:CDL} and Corollary~\mref{co:GS}, $M(\Delta(Z))$ is a basis of the free $\phi$-algebra $\bfk\mapm{Z}/I_\phi(Z)$ in Proposition~\mref{pp:frpio}. Therefore, the restriction map
$$ \bfk\langle\Delta(Z)\rangle=\bfk M(\Delta(Z)) \to \bfk \mapm{Z} \to \bfk \mapm{Z}/I_\phi(Z)$$
is a linear isomorphism. Since $\bfk M(\Delta(Z))$ is closed under the multiplication on $\bfk\mapm{Z}$, we see that this linear isomorphism is an algebra isomorphism.
The recursive definition of the operator $d$ follows from the fact that it is the operator $\delta$ on $\bfk\mapm{Z}$ modulo $I_\phi(Z)$ and hence satisfies
$$\delta(uv)=N(u,v), \forall u, v\in M(\Delta(Z)).$$
\end{proof}

\section{Rota-Baxter type operators}
\mlabel{sec:RB}

We just give a brief discussion of Rota-Baxter type operators. Their study is more involved than differential type operators and will be left to a future work.

\begin{mdefn} We say an expression
$E(X) \in \bfk\mapm{X}$ is {\bf in Rota-Baxter reduced form}
\paren{RBRF} if it does not contain any subexpression of the form
$\lc u\rc \lc v \rc$ for any $u,v \in \bfk\mapm{X}$.
\mlabel{def:RBRF}
\end{mdefn}

\begin{mdefn} An OPI $\phi \in
\bfk\mapm{x,y}$ is {\bf of Rota-Baxter type} if it has the form $\lc
x\rc\lc y\rc-\lc M(x,y)\rc$ for some $M(x,y)\in \bfk\mapm{x,y}$ that
satisfies the two conditions:
\begin{enumerate}
\item
$M(x,y)$ is totally linear in $x, y$ in the sense that $x$ (resp. $y$) appears exactly once in each monomial of $M(x,y)$;
\item
$M(x,y)$ is in RBRF;
\item
$M(M(u,v),w)-M(u,M(v,w))$ is $\Pi_\phi$-reducible for all $u, v, w
\in \bfk\mapm{x,y}$, where $\Pi_\phi$ is the rewriting system
$$\Pi_\phi := \bigl\{ \lc a\rc \lc b\rc \mapsto \lc M(a,b)\rc
\ \mid\  a,b \in \bfk\mapm{x,y} \bigr\}.$$ \mlabel{it:rbii}
\end{enumerate}\vspace{-15pt}
If $\phi:=\lc x\rc\lc y\rc-\lc M(x,y)\rc$ is of Rota-Baxter type, we
also say the expression $M(x,y)$, and the defining operator $P$ of a
$\phi$-algebra $S$ are {\bf of Rota-Baxter type}. \mlabel{de:rbtype}
\end{mdefn}

\begin{exam} The expression
$M(x,y):=x\lc y\rc$ that defines the average operator is of
Rota-Baxter type since
\begin{eqnarray*}
M(M(u,v),w)-M(u,M(v,w))&=&M(u,v)\lc w\rc -u\lc M(v,w)\rc\\
&=&u\lc v\rc \lc w\rc -u\lc v \lc w\rc\rc\\
&\mapsto&u\lc v\lc w\rc\rc - u\lc v \lc w\rc\rc = 0.
\end{eqnarray*}
Other examples are OPIs corresponding to a Rota-Baxter operator or a
Nijenhuis operator.
\end{exam}

\begin{problem}{\bf (Rota's Classification Problem: Rota-Baxter Case)}
Find all Rota-Baxter type operators.
In other words, find all Rota-Baxter type expressions $M(x,y)\in
\bfk\mapm{x,y}$. \mlabel{pr:rbt}
\end{problem}

We propose the following answer to this problem.
\begin{conjecture}
{\bf (OPIs of Rota-Baxter Type)} For any $d, \lambda \in \bfk$, the
expressions $M(x,y)$ in the list below are of Rota-Baxter type
\paren{new types are underlined}. Moreover, any OPI of Rota-Baxter
type is necessarily of the form
$$\phi:= \lc x\rc \lc y\rc - \lc M(x,y)\rc,$$
for some $M(x,y)$ in the list.

\begin{enumerate}
\item $x\lc y\rc \quad$ \paren{average operator}, 
\item $\lc x\rc y \quad$ \paren{inverse average operator}, 
\item $\underline{x\lc y\rc +y\lc x\rc }$, 
\item $\underline{\lc x\rc y+\lc y\rc x}$, 
\item $x\lc y\rc +\lc x\rc y -\lc xy\rc \quad$ \paren{
Nijenhuis operator}, 
\item $x\lc y\rc +\lc x\rc y + \lambda xy \quad$
\paren{Rota-Baxter operator of weight $\lambda$}, 
\item $\underline{x\lc y\rc -x\lc 1\rc y + \lambda xy}$, 
\item $\underline{\lc x\rc y - x\lc 1\rc y + \lambda xy}$, 
\item $\underline{x\lc y\rc  + \lc x\rc y -x\lc 1\rc y+\lambda xy}
\quad$ \paren{generalized
Leroux TD operator with weight $\lambda$}, 
\item $\underline{x\lc y\rc +\lc x\rc y - xy\lc 1\rc -x\lc 1\rc y+
\lambda xy}$, 
\item $\underline{x\lc y\rc +\lc x\rc y -x\lc 1\rc y-\lc xy\rc +
\lambda xy}$, 
\item $\underline{x\lc y\rc +\lc x\rc y-x\lc 1\rc y-\lc 1\rc xy+
\lambda xy}$, 
\item $\underline{d x\lc 1\rc y + \lambda xy}\quad$ \paren{generalized
endomorphisms}, 
\item $\underline{d y\lc 1\rc x + \lambda yx} \quad$ \paren{generalized
antimorphisms}. 
\end{enumerate}
\mlabel{con:rbt}
\end{conjecture}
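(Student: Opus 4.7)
The plan is to parallel the treatment of differential type operators in Section~\mref{sec:diff} and Section~\mref{ss:diffGS}. The conjecture has two parts: first, that each of the fourteen expressions listed yields an OPI of Rota-Baxter type, and second, that the list is exhaustive.

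For the forward direction, I would check case-by-case that $M(M(u,v),w) - M(u, M(v,w))$ is $\Pi_\phi$-reducible for each $M$ in the list. Entries 1, 2, 5, 6 are the classical average, inverse average, Nijenhuis, and Rota-Baxter operators, whose associativity modulo $\Pi_\phi$ is standard; a uniform device is the substitution $\alpha(u) := \lc u\rc + \lambda u$ (or an analogous affine twist of $\lc\,\rc$) that recasts the rewriting rule as a plain associativity statement for the derived bilinear operation on $\alpha$-images, in close analogy with the factorization used in the proof of Theorem~\mref{thm:exam}. For entries 3, 4, 13, 14, the expressions are short two-term sums and a direct expansion using associativity in $\mapm{x,y}$ followed by one application of $\lc a\rc\lc b\rc \mapsto \lc M(a,b)\rc$ suffices. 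For the entries involving $\lc 1\rc$ (entries 7--12), one additionally uses that $\lc u\rc\lc 1\rc$ and $\lc 1\rc\lc u\rc$ themselves rewrite under $\Pi_\phi$, generating the required cancellations.

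For the reverse direction, the strategy is to first prove a Rota-Baxter analogue of Theorem~\mref{thm:gsdiff}, establishing the equivalence of (i) $\phi$ being of Rota-Baxter type, (ii) the rewriting system $\Pi_\phi$ being convergent, (iii) the set $S_\phi := \{\lc u\rc\lc v\rc - \lc M(u,v)\rc \mid u,v \in \mapmonoid(Z)\backslash\{1\}\}$ being a Gr\"obner-Shirshov basis under a monomial well-order chosen so that $\lc u\rc\lc v\rc$ is the leading term of $\phi(u,v)$ whenever $M$ is in RBRF, and (iv) an explicit description of the free $\phi$-algebra on $Z$ in terms of the subalgebra of $\bfk\mapm{Z}$ spanned by bracketed words avoiding the pattern $\lc\,\rc\lc\,\rc$. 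The arguments should follow the template of Theorem~\mref{thm:gsdiff} closely; the main adjustment is that the relevant intersection and including compositions now arise from triples of the form $\lc a\rc\lc b\rc\lc c\rc$ and from nested patterns $\lc a\rc\lc p|_{\lc r\rc\lc s\rc}\rc$, rather than from $\lc (ab)c\rc = \lc a(bc)\rc$. Once (iii) is in hand, completeness reduces to solving the polynomial system on the unknown coefficients of $M(x,y)$ forced by triviality of all compositions, which can be attacked with the {\it Mathematica} search described in Section~\mref{sec:comp}, yielding a Rota-Baxter analogue of Theorem~\mref{thm:lowdeg} up to any fixed operator degree.

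The main obstacle is completeness beyond the operator degree reachable by finite enumeration. Unlike the total-linearity hypothesis, the RBRF constraint does not bound the depth of brackets nested around a single variable, so the candidate space is infinite even after breadth and multiplicity are fixed. A structural argument is needed: one would aim to show, by induction on $d(M)$, that any surviving Gr\"obner-Shirshov constraint forces the coefficients of all monomials of depth $\geq 2$ in $M$ to vanish, reducing the classification to the depth-one case already accessible computationally. Handling the parametric families in entries 6 and 9--14 uniformly, with the free parameters $d$ and $\lambda$, within such an induction is expected to be the technical crux; without it, the conjecture remains supported only by finite computational evidence, exactly as in the differential type case.
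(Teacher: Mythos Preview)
The statement you are addressing is Conjecture~\mref{con:rbt}, and the paper does not prove it. Section~\mref{sec:RB} opens by saying the study of Rota-Baxter type operators ``is more involved than differential type operators and will be left to a future work,'' and indeed no analogue of Theorem~\mref{thm:exam} (forward direction) or Theorem~\mref{thm:lowdeg} (bounded-degree completeness) is stated or proved for the Rota-Baxter case. The only supporting material in the paper is the computational search described in Section~\mref{sec:comp}, together with Remark~\mref{rem:gen} on the expected shape of free $\phi$-algebras. So there is no ``paper's own proof'' to compare against.

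Your proposal is a sensible research outline that mirrors the differential-type development, and you correctly identify where it falls short of a proof: the completeness direction requires controlling an infinite family of candidate monomials, and the inductive depth-reduction you sketch is a hope rather than an argument. That gap is real and is precisely why the paper leaves this as a conjecture. Two further cautions on your forward-direction sketch: first, entries 3 and 4 need more than ``one application'' of the rewriting rule---the reduction of $M(M(u,v),w)-M(u,M(v,w))$ for $M(x,y)=x\lc y\rc+y\lc x\rc$ requires rewriting several $\lc\,\rc\lc\,\rc$ patterns before cancellation; second, the paper itself flags (end of Section~\mref{sec:comp}) that the entries involving $\lc 1\rc$ are delicate because $\Pi_\phi$ may fail to terminate once units are admitted, so your claim that ``$\lc u\rc\lc 1\rc$ and $\lc 1\rc\lc u\rc$ themselves rewrite under $\Pi_\phi$, generating the required cancellations'' would need a careful termination argument that the paper explicitly does not supply.
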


\begin{mremark} Let $\genset$ be any set. Recall that
the {bracketed words} in $\genset$ that are in RBRF when viewed as
elements of $\bfk\mapm{\genset}$ are called {\bf Rota-Baxter words}
and that they form a $\bfk$-basis of the free Rota-Baxter
$\bfk$-algebra on $\genset$. See~{\em \mcite{G-S}}. Every expression in
RBRF is a $\bfk$-linear combination of Rota-Baxter words in
$\bfk\mapm{x,y}$.

More generally, if $\phi(x,y)$ is of Rota-Baxter type, then the free
$\phi$-algebras on a set $\genset$ in the corresponding categories
of Rota-Baxter type $\phi$-algebras have special bases that can be
constructed uniformly. Indeed, if $\bfk\disj{\genset}$ denotes the
set of Rota-Baxter words in $\bfk\mapm{\genset}$, then the map
$$ \bfk\disj{\genset} \to
\bfk\mapm{\genset}\to \bfk\mapm{\genset}/I_\phi(Z)$$ is bijective.
Thus a suitable multiplication on $\bfk\disj{\genset}$ makes it the
free $\phi$-algebra on $\genset$. This is in fact how the free
Rota-Baxter $\bfk$-algebra on $\genset$ is constructed when
$\phi(x,y)$ is the OPI corresponding to the Rota-Baxter operator, the Nijenhuis operator~\cite{LG} and the TD operator~\cite{Zhou}.
\mlabel{rem:gen}
\end{mremark}

\section{Computational experiments}\mlabel{sec:comp}

In this section, we give a brief description of the computational
experiments in {\it Mathematica} that result in Conjectures
\mref{con:diffclass} and \mref{con:rbt}. The programs consist of
several Notebooks, available at \mcite{Sit} in a zipped file.

Basically, the non-commutative arithmetic for an operated algebra
was implemented {\it ad hoc}, using bracketed words and relying as
much as possible on the built-in facilities in {\it Mathematica} for
non-commutative multiplication, list operations, rewriting, and
equation simplification. Care was taken to avoid infinite recursions
during rewriting of expressions. An elaborate ansatz with
indeterminate coefficients (like the expression $N(x,y)$ in Theorem
\mref{thm:lowdeg}) is given as input, and to obtain differential
type OPIs, the difference $N(uv,w)-N(u,vw)$ is differentially
$\phi$-reduced using the rewrite rule system $\Sigma_\phi$. The
Rota-Baxter type OPIs are obtained similarly using an ansatz
$M(x,y)$ and reducing the difference $M(M(u,v),w)-M(u,M(v,w))$ with
the rewrite rule system $\Pi_\phi$. The resulting reduced form is
equated to zero, yielding a system of equations in the indeterminate
coefficients. This system is simplified using the method of Gr\"ober bases
(a heuristic application of Divide and Conquer has been automated).
Once the ansatz is entered, the ``algebras" can either be obtained in
one command {\tt getAlgebras}, or the computation can be stepped
through.

The programs provided 10 classes of differential type based on an
ansatz of 14 terms, which is then manually merged into the 6 classes
in Conjecture \mref{con:diffclass}. We obtain no new ones after
expanding the ansatz to 20 terms, including terms such as $\lc\lc x
\rc\rc \lc \lc y \rc\rc$.
The list for Rota-Baxter type OPIs are obtained
from an ansatz with 14 terms, some involving $P(1)$ (or $\lc \bfone
\rc$, in bracket notation) in a triple product.

We are quite confident that our list of differential type operators
is complete. For Rota-Baxter type operators, our list may not be
complete, since in our computations, we have restricted our
rewriting system $\Pi_\phi$ to disallow units in order to get around
the possibly non-terminating reduction sequences modulo the
identities. This is especially the case when the OPIs involve $\lc
\bfone \rc$. Typically, for Rota-Baxter type OPIs, we do not know how
to handle the appearance of $\lc\lc\bfone\rc\,\lc\bfone \rc\rc$
computationally (they may cancel, or not, if our rewriting system
$\Pi_\phi$ is expanded to include units as in Definition
\mref{de:rbtype}). While expressions involving $\lc \bfone \rc$
alone may be reduced to zero using an expanded rewriting system,
monomials involving a mix of bracketed words and $\lc \bfone \rc$
are often linearly independent over $\bfk$.

The {\it Mathematica} Notebook {\tt DTOrderTwoExamples.nb} shows the
computations for differential type operators and the Notebook {\tt
VariationRotaBaxterOperators.nb} does the same for Rota-Baxter type
ones. Non-commutative multiplication is printed using the symbol
$\otimes$ instead of $**$. It is known that the output routines fail
to be compatible with {\it Mathematica}, Version 8, and we will try
to fix this incompatibility and post updated versions on-line.

\section{Summary and outlook}
\mlabel{sec:sum}
We have studied Rota's classification problem by considering algebras with
a unary operator that satisfies operated polynomial identities. For
this, we have reviewed the construction of the operated polynomial algebra.

A far more general theory called variety of algebras exists, of
which the theories of PI-rings, PI-algebras, and OPI-algebras are
special cases \mcite{DF}. An ``algebra'' is any set with a set of
functions (operations), together with some identities perhaps. A
Galois connection between identities and ``variety of algebras'' is
set up similar to the correspondence between polynomial ideals and
algebraic varieties. Thus, differential algebra is one variety of
algebra, Rota-Baxter algebra is another, and so on.

In mathematics, specifically universal algebra \cite{BS,Cohn}, a
{\it {variety of algebras}} [or a {\it{finitary algebraic
category}}] is the class of all algebraic structures of a given
{signature} satisfying a given set of {identities}. Equivalently, a
variety is a class of algebraic structures of the same signature
which satisfies the HSP properties: {closed under the taking of
homomorphic images, subalgebras and (direct) products}.

This equivalence, known as the HSP Theorem, is a result of
G.\,Birkhoff, which is of fundamental importance in universal
algebra. We refer interested readers to \cite[Chap.\,I, Theorem
3.7]{Cohn}, \cite[Theorem 9.5]{BS} and, for computer scientists with
a model theory background, \cite[Theorem 3.5.14]{BN}. It is simple
to see that the class of algebras satisfying some set of equations
will be closed under the HSP operations. Proving the
converse---classes of algebras closed under the HSP operations must
be equational---is much harder.

By restricting ourselves to those special cases of Rota's Problem
that Rota was interested in, and by exploiting the structures of
operated algebra and compatibility of associativity on one hand, and
symbolic computation ({\it Mathematica}) on the other, we are able
to give two conjectured lists of OPI-algebras.

The project arose from our belief that the {construction of free
objects} in each class of the varieties should be uniformly done.
Currently, similar results for the known classes are proved
individually.

We also believe that there is a {Poincar\'e-Birkhoff-Witt} type
theorem, similar to the enveloping algebra of a Lie algebra, where a
canonical basis of the enveloping algebra is constructed from a
basis of the Lie algebra. Here, the free algebra of the variety is
constructed from the generating set $\genset$ with Rota-Baxter words
or terms (see Remark \mref{rem:gen}).

The theory of OPI-rings needs to be studied further and there are
many open problems. We end this discussion by providing just one. A
variety is {\bf Schreier} if every subalgebra of a free algebra in
the variety is free. For example, the variety of all groups (resp.
abelian groups) is Schreier. A central problem in the theory of
varieties is whether a particular variety of algebras is Schreier.
{Which of the varieties of differential type algebras or Rota-Baxter
type algebras are Schreier?}

%
%
\bibliographystyle{elsarticle-harv}
\hyphenpenalty=500

\end{document}